\theoremstyle{plain}
\newtheorem{thm}{Theorem}[section]
\numberwithin{equation}{section} 
\numberwithin{figure}{section} 
\theoremstyle{plain}
\newtheorem{cor}[thm]{Corollary} 
\theoremstyle{definition}
\newtheorem{defn}[thm]{Definition}
\theoremstyle{plain}
\newtheorem{lem}[thm]{Lemma} 
\theoremstyle{plain}
\newtheorem{prop}[thm]{Proposition} 
\theoremstyle{plain}
\newtheorem{fact}[thm]{Fact}
\theoremstyle{plain}
\theoremstyle{plain}
\theoremstyle{definition}
\newtheorem{rem}[thm]{Remark}
\newtheorem{ques}[thm]{Question}
\newtheorem{theo}[thm]{Theorem}
\theoremstyle{plain}
\@undefined\usepackage[usenames,dvips]{color}
\else\usepackage[usenames,dvipsnames]{color}
\def\moverlay{\mathpalette\mov@rlay}
\def\mov@rlay#1#2{\leavevmode\vtop{%
   \baselineskip\z@skip \lineskiplimit-\maxdimen
   \ialign{\hfil$\m@th#1##$\hfil\cr#2\crcr}}}
\newcommand{\charfusion}[3][\mathord]{
    #1{\ifx#1\mathop\vphantom{#2}\fi
        \mathpalette\mov@rlay{#2\cr#3}
      }
    \ifx#1\mathop\expandafter\displaylimits\fi}
\def \<{\langle}
\def \>{\rangle}
\def \pert{\operatorname{pert}}
\def\Ind#1#2{#1\setbox0=\hbox{$#1x$}\kern\wd0\hbox to 0pt{\hss$#1\mid$\hss}
\lower.9\ht0\hbox to 0pt{\hss$#1\smile$\hss}\kern\wd0}
\def\ind{\mathop{\mathpalette\Ind{}}}
\def\Notind#1#2{#1\setbox0=\hbox{$#1x$}\kern\wd0\hbox to 0pt{\mathchardef
\nn=12854\hss$#1\nn$\kern1.4\wd0\hss}\hbox to
0pt{\hss$#1\mid$\hss}\lower.9\ht0 \hbox to
0pt{\hss$#1\smile$\hss}\kern\wd0}
\def\nind{\mathop{\mathpalette\Notind{}}}
\def \fin{\operatorname{fin}}
\def \isol{\operatorname{Isol}}
\newcommand{\dminus}{ 
\buildrel\textstyle\ .\over{\hbox{ 
\vrule height3pt depth0pt width0pt}{\smash-} 
}}
\newcommand{\acl}{\operatorname{acl}}
\newcommand{\tp}{\operatorname{tp}}
\newcommand{\dcl}{\operatorname{dcl}}
\begin{document}

\title{Model theory of Hilbert spaces expanded by normal operators}

\author{Alexander Berenstein, Nicol\'as Cuervo Ovalle, and Isaac Goldbring}

\address{Department of Mathematics\\ Los Andes University, Cra. 1 \#18a-12, edificio H, Bogot\'a, Colombia, 111711.}
\urladdr{https://pentagono.uniandes.edu.co/~aberenst/index.html}
\thanks{The first-named author would like to thank the UC Irvine Department of Mathematics for their hospitality during a short visit in Spring 2025. He would also like to thank Universidad de los Andes for support during his sabbatical in Spring 2025}

\address{Department of Mathematics\\ Los Andes University, Cra. 1 \#18a-12, edificio H, Bogot\'a, Colombia, 111711.}
\email{n.cuervo10@uniandes.edu.co}
\thanks{The second-named author was partially supported by NSF grant DMS-2054477. He would also like to thank the UC Irvine Department of Mathematics for their hospitality.}

\address{Department of Mathematics\\University of California, Irvine, 340 Rowland Hall (Bldg.\# 400),
Irvine, CA 92697-3875}
\email{isaac@math.uci.edu}
\urladdr{http://www.math.uci.edu/~isaac}
\thanks{The third-named author was partially supported by NSF grant DMS-2054477.}

 \date{\today}

\maketitle

\begin{abstract}
 We study expansions of Hilbert spaces with a bounded normal operator $T$.  We axiomatize this theory in a natural language and identify all of its completions. We prove the definability of the adjoint $T^*$ and prove quantifier elimination for every completion after adding $T^*$ to the language. We identify types with measures on the spectrum of the operator and show that the logic topology on the type space corresponds to the weak*-topology on the space of measures.  We also give a precise formula for the metric on the space of $1$-types.  We prove all completions are stable and characterize the stability spectrum of the theory in terms of the spectrum of the operator. We also show all completions, regardless of their spectrum, are $\omega$-stable up to perturbations.   
\end{abstract}

\section{Introduction}
This paper deals with the model theory of expansions of Hilbert spaces by a bounded normal operator. Many basic facts about these expansions were known by Henson and his collaborators, and in many cases, these results went unpublished. The foundational result that makes these expansions amenable to model-theoretic treatment is stated as Theorem \ref{Cor:spec-elem} below, which characterizes elementary equivalence in terms of spectral equivalence; this observation is due to  Eckhardt and Henson based on earlier contributions by Moore. Other fundamental results on expansions of Hilbert spaces by normal operators are scattered throughout the literature, often in the form of special cases, and many times not even appearing in print.  As a result, the authors of this paper decided it was time to write down in a careful and systematic way a fairly comprehensive model-theoretic treatment of these structures.  While several of our results follow the arguments in the literature and others are known to be folklore, the proofs we present here need a small twist, as one has to take into account isolated points of the spectrum and their contribution to the algebraic closure and to forking independence and isolation of types, issues that did not appear when the spectrum is of a special form.  In this way, our paper is partly a survey article and partly a collection of new results.

Let us give a brief historical overview of the subject and the results we present in this paper. Stability of the expansion was known to Henson and Iovino (in the setting of positive bounded formulas) and the failure of $\omega$-stability when the spectrum is rich (one direction of Proposition \ref{prop:omega-stability} below) appears in print in \cite{ArBeCu} for the special case of self-adjoint operators, while the special case when the spectrum is $S^1$ can be found in \cite{BYUsZa}. An argument characterizing $\omega$-stability in terms of the spectrum can be found in \cite{ArBeCu} when the underlying operator is self-adjoint. 
Characterizing forking in these expansions is a small modification of the classical arguments by Chatzidakis-Pillay that appear in \cite{ChPi} and some versions for specific expansions of Hilbert spaces with self-adjoint operators can be found in \cite{ArBe, BYUsZa, BeBu}. 
A proof of superstability of expansions by unitary operators with full spectrum appears in the unpublished note of Ben Yaacov, Usvyatsov, and Zadka \cite{BYUsZa} and for more general generic group actions with amenable discrete groups in \cite{Be}.
The study of the model-theoretic perturbations of the operator, for the special case of unitary operators with full spectrum, was carried out in \cite{BYBe}. Understanding types in terms of the measures they define (a model theoretic interpretation of the spectral decomposition theorem, see Fact \ref{fact: measures} below) first appears in \cite{BYUsZa} for expansions by a unitary operator with full spectrum.

In this paper we will prove many of the known results described above, and, as stated, many of our proofs will be inspired by some of the arguments that have either appeared in the literature or in unpublished notes.  That being said, we also present several new contributions. One such new result worth pointing out is the definability of the adjoint $T^*$ in the structure $(H,T)$ after adding to the theory axioms stating that $T$ is a bounded operator. Most of the results from Section \ref{sec:normal operators} are also new, in particular, Theorem \ref{thm:limit theories} characterizing limits of theories of expansions with normal operators in terms of Gromov-Hausdorff limits of their spectra
and Lemma \ref{lem:Application-definability} establishing the definability of certain spectral projections. Other results from Section \ref{sec:normal operators}, like the theory of the expansion $(H,T)$ being pseudocompact, are most likely folklore.

As usual, we expect the reader to be familiar with continuous logic as presented in \cite{BBHU}. In particular, we assume that the reader is familiar with how Hilbert spaces are treated as structures in continuous logic (see \cite[Section 15]{BBHU}) although we briefly recall the set-up in the next section.  To follow the material in Section \ref{sec:stability}, we also expect the reader to be familiar with basic results from stability theory, such as the stability spectrum, characterizations of forking in terms of the properties of its associated independence relation, and notions such as non-multidimensionality and orthogonality.  A good reference for basic material is \cite{BYUs}, a discussion of independence relations can be found in \cite[Theorems 14.12 and 14.14]{BBHU} (which follows the classical arguments from \cite{Wa}), and non-multidimensionality and orthogonality are defined as in the discrete case \cite{Pi}.
Finally, the last results of Section \ref{sec:stability} require some knowledge of perturbations in the continuous setting; see, for example, \cite{BY-pert, HaIb,HiHy} (our definition of $d_{\pert}$ comes from \cite{HiHy}).

This paper is organized as follows. We introduce the basic results that we will need from functional analysis in Section \ref{sec:prelim}. While our main reference for most of the results is \cite{conway}, we also recommend the reader consult \cite{GePa} for a very readable and elementary proof of the version of the Weyl-von Neumann-Berg theorem that we present as Fact \ref{fact:WNB} below. In Section \ref{sec:normal operators}, we present the basic model theoretic results of the paper: an axiomatization of expansions by bounded normal operators, definability of the adjoint $T^*$ in the expansion $(H,T)$, a characterization of the complete theories of these expansions according to spectral data of the underlying operator, and a characterization of convergence of these complete theories  in terms of the behavior of the spectrum. In Section \ref{sec:QE}, we prove quantifier elimination for expansions of the form $(H,T,T^*)$ when the operator $T$ is normal. In Section \ref{types-categ}, we study types in terms of the measure they define and we use this fact to characterize the logic topology as the weak$^*$-topology of the associated measures and the distance topology (for $1$-types) via a formula reminiscent of the formula defining the total variation metric. In Section \ref{sec:stability}, we show all expansion by normal operators are superstable. Furthermore,  we show the expansion $(H,T)$ is $\omega$-stable if and only if the spectrum of $T$ is at most countable. We also characterize forking independence in the expansion, which allows us to show that all expansions have weak elimination of imaginaries. Finally, we prove all such expansions are $\omega$-stable up to perturbation and admit prime models over arbitrary parametersets, again up to perturbation.  We conclude the paper with a list of some open questions.

We end this introduction with some conventions we use throughout the paper:
\begin{itemize}
    \item   We let $\mathbb S$ and $\mathbb D$ denote the unit circle and unit disk in the complex plane, respectively.  
    \item For a compact subset $K$ of $\mathbb C$, $M(K)$ denotes the space of regular Borel complex measures on $K$. We also let $M(K)_+$ denote the collection of the positive measures in $M(K)$.
    \item For a complex measure $\mu$, we let $\|\mu\|$ denote its total variation.  
    \item When $V$ is a closed subspace of a Hilbert space $H$, we sometimes use the notation $V^\perp$ for the orthogonal complement of $V$ in $H$ and other times we use the more informative notation $H\ominus V$, which makes clear the ambient space in which the orthogonal complement is being computed.
\end{itemize}

\section{Preliminaries on normal operators}\label{sec:prelim}

We begin by recalling some well-known facts about normal operators.  For details and proofs, we suggest \cite{conway}.  Throughout this paper, $H$ denotes a complex Hilbert space and $T:H\to H$ a bounded operator.  We usually assume that the operator norm $\|T\|$ of $T$ is bounded by $1$, the general case being handled simply by rescaling.  The \textbf{adjoint of $T$} is the unique bounded operator $S:H\to H$ satisfying $\langle Tx,y\rangle=\langle x,Sy\rangle$ for all $x,y\in H$; the adjoint of $T$ is denoted by $T^*$.

A bounded operator $T:H\to H$ is called \textbf{normal} if $T$ commutes with its adjoint, that is, if $TT^*=T^*T$; equivalently, $T$ is normal if $\|Tx\|=\|T^*x\|$ for all $x\in H$ (see \cite[Proposition II.2.16]{conway}).  The class of normal operators includes the case of self-adjoint operators (where $T=T^*$) and the class of unitary operators (where $TT^*=T^*T=\operatorname{Id}_H$, the identity operator on $H$).

For a general bounded operator $T:H\to H$, a closed subspace $V$ of $T$ is called \textbf{invariant for $T$} if $T(V)\subseteq V$ and \textbf{reducing for $T$} if it is invariant for both $T$ and $T^*$; in this case, the orthogonal complement $V^\perp$ of $V$ is also invariant under both $T$ and $T^*$ and $(T\! \upharpoonright_V)^*=T^*\! \upharpoonright_V$ as operators on $V$.  An important example of a subspace of $H$ that is reducing for $T$ is any eigenspace $H_\lambda:=\{x\in H \ : \ Tx=\lambda x\}$\footnote{This is really only an eigenspace if $H_\lambda\not=\{0\}$.}; indeed, if $x\in H_\lambda$, then $T^*x=\bar \lambda x\in H_\lambda$.  

In general, a subspace invariant for $T$ need not be reducing for $T$; for example, if $T$ is the unilateral shift on $\ell^2(\mathbb Z)$, then $\ell^2(\mathbb N)$ is invariant for $T$, but not reducing for $T$.  However, in this example note that $T\! \upharpoonright \ell^2(\mathbb N)$ is the usual shift operator on $\ell^2(\mathbb N)$, which is not normal.  Contrast this with the following fact (see, for example, \cite[Exercise IX.9.5]{conway}:

\begin{fact}\label{normalreducing}
Let $T$ be a normal operator on $H$ and let $V$ be a $T$-invariant subspace of $H$.  Then $T\! \upharpoonright V$ is normal if and only if $V$ is reducing for $T$.
\end{fact}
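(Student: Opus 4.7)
The plan is to prove the two implications separately.  The ``reducing implies normal'' direction is immediate from information already collected in the excerpt, while the converse requires a short Pythagorean comparison argument that is the only nontrivial part.

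Assume first that $V$ is reducing for $T$.  Then $V$ is invariant under both $T$ and $T^*$, so we may form the restriction $S := T\upharpoonright V$ on $V$ and, as noted just before the statement, $(T\upharpoonright V)^* = T^*\upharpoonright V$ as operators on $V$.  Since $TT^* = T^*T$ holds on all of $H$, this identity restricts to $V$ and yields $SS^* = S^*S$, so $S$ is normal.

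For the converse, suppose $V$ is $T$-invariant and $S := T\upharpoonright V$ is normal on $V$.  Let $P$ denote the orthogonal projection of $H$ onto $V$.  The first step is to identify $P T^*\upharpoonright V$ with $S^*$: for every $x,y \in V$,
\[ \langle y, S^* x\rangle = \langle Sy, x\rangle = \langle Ty, x\rangle = \langle y, T^* x\rangle, \]
so $T^* x - S^* x \perp V$.  Thus for $x\in V$ we have the orthogonal decomposition
\[ T^* x = S^* x + (I-P) T^* x, \qquad S^* x \in V,\ (I-P) T^* x \in V^{\perp}, \]
and Pythagoras gives $\|T^* x\|^{2} = \|S^* x\|^{2} + \|(I-P) T^* x\|^{2}$.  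The second step is to compute $\|Tx\|^{2}$ in two ways.  On one hand, normality of $T$ on $H$ and the characterization $\|Ax\|=\|A^*x\|$ yield $\|Tx\|^{2} = \|T^* x\|^{2}$.  On the other hand, since $Tx = Sx$, normality of $S$ on $V$ gives $\|Tx\|^{2} = \|Sx\|^{2} = \|S^* x\|^{2}$.  Equating the two forces $\|(I-P)T^* x\| = 0$, i.e.\ $T^* x \in V$.  Hence $V$ is also $T^*$-invariant, so $V$ is reducing for $T$.

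The only real subtlety is the identification $P T^*\upharpoonright V = S^*$: one cannot directly invoke $(T\upharpoonright V)^{*} = T^*\upharpoonright V$ here, since this equation presupposes that $V$ is reducing, which is exactly what we are trying to prove.  Computing the adjoint of $S$ intrinsically in $V$ recovers the correct $V$-component of $T^* x$, and after that the argument is just the ``norm-preservation'' characterization of normality recalled in the preliminaries, applied once to $T$ and once to $S$.
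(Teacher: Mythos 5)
The paper itself gives no proof of this fact, only a citation to Conway (Exercise IX.9.5), so there is no in-paper argument to compare against. Your proof is correct and is the standard one: the easy direction follows from the restriction identity $(T\upharpoonright V)^* = T^*\upharpoonright V$ already recorded in the preliminaries, and the nontrivial direction is the Pythagorean comparison $\|T^*x\|^2 = \|S^*x\|^2 + \|(I-P)T^*x\|^2$ combined with $\|Tx\| = \|T^*x\|$ and $\|Sx\| = \|S^*x\|$ from normality of $T$ and of $S$ respectively, forcing the $V^\perp$-component of $T^*x$ to vanish. Your remark on why one must identify $PT^*\upharpoonright V$ with $S^*$ intrinsically, rather than invoking the reducing-subspace identity for the adjoint prematurely, is exactly the right point of care and is handled correctly by the computation $\langle y, S^*x\rangle = \langle Sy, x\rangle = \langle Ty, x\rangle = \langle y, T^*x\rangle$ for $x,y\in V$.
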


For an arbitrary bounded operator $T:H\to H$, the \textbf{spectrum of $T$} is the set $\sigma(T):=\{\lambda \in \mathbb{C} \ : \ T-\lambda \text{ is not invertible}\}$; it is a nonempty, compact subset of the disk in $\mathbb{C}$ centered at the origin of radius $\|T\|$.  The spectrum $\sigma(T)$ of $T$ always contains the \textbf{approximate eigenvalues} of $T$, which are those $\lambda \in \mathbb{C}$ for which there exist unit vectors $x_n\in H$ with $\|Tx_n-\lambda x_n\|\to 0$ as $n\to \infty$; this of course includes the set of actual eigenvalues of $T$.  A general bounded operator need not have any eigenvalues, even when $N$ is normal (consider, for example, the shift operator on $\ell_2(\mathbb{Z}))$, although it always has approximate eigenvalues.  In fact, every element of the boundary of $\sigma(T)$ is an approximate eigenvalue of $T$; see \cite[Proposition VII.6.7]{conway}.)  For normal operators, the set of approximate eigenvalues of $T$ is precisely the spectrum $\sigma(T)$ of $T$.  Moreover, in this case, $\sigma(T)$ decomposes into two pieces:
\begin{itemize}
    \item The isolated points $\operatorname{Isol}(\sigma(T))$ of $\sigma(T)$, which are then necessarily eigenvalues for $T$ whose corresponding eigenspaces may be finite- or infinite-dimensional.
    \item The accumulation points $\sigma(T)'$ of $\sigma(T)$, which may or may not be eigenvalues of $T$; when they are eigenvalues, the corresponding eigenspaces are infinite-dimensional.
\end{itemize}

Another consequence of the fact that the spectrum of a normal operator coincides precisely with its approximate eigenvalues is that whenever $T:H\to H$ is normal and $V\subseteq H$ is reducing for $T$ (so $T\! \upharpoonright V$ is also normal), then $\sigma(T\! \upharpoonright 
 V)\subseteq \sigma(T)$; in general, the spectrum of the restriction of an operator to a closed invariant subspace need not be contained in the spectrum of the original operator.\footnote{This is probably well-known, but we could not find a precise reference.  For example, one can let $H=\ell^2(\mathbb Z)$, $V:=\{a\in H \ : \ a_i=0 \text{ for all }i<0\}$, and $T$ be the right-shift operator.  Then $\sigma(T)=\mathbb{S}^1$ while $\sigma(T\upharpoonright V)=\mathbb{D}$.}



The following lemma is well-known, but since we were unable to find a reference for it, we include a proof for the sake of completeness.

\begin{lem}\label{lem:countable}
    If $T$ is normal and $\sigma(T)$ is countable, then $H$ is generated by the eigenvectors of $T$.
\end{lem}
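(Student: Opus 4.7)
The plan is to let $V$ denote the closed linear span of all eigenvectors of $T$ and show that $V=H$. The idea is to suppose instead that $W:=H\ominus V$ is nonzero, restrict $T$ to $W$ to obtain a normal operator whose spectrum is still countable, and produce an eigenvector in $W$, contradicting the definition of $V$.

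First I would verify that $V$ is reducing for $T$. Each eigenspace $H_\lambda$ is reducing, since the excerpt already notes that $T^*$ acts on $H_\lambda$ as multiplication by $\bar\lambda$. The (not-necessarily-closed) linear span of the $H_\lambda$ is therefore $T$- and $T^*$-invariant, and then so is its closure $V$. Consequently $W=V^\perp$ is reducing for $T$, so $T\!\upharpoonright\! W$ is a bounded normal operator on the Hilbert space $W$ by Fact \ref{normalreducing}.

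Next, suppose for contradiction that $W\neq\{0\}$. Then $\sigma(T\!\upharpoonright\! W)$ is nonempty, and by the containment recorded in the excerpt for restrictions of normal operators to reducing subspaces, $\sigma(T\!\upharpoonright\! W)\subseteq\sigma(T)$, which is countable. A nonempty countable compact subset of $\mathbb{C}$ must have an isolated point; indeed, it is a complete metric space that is a countable union of its singletons, so by Baire category at least one singleton is open, equivalently the Cantor--Bendixson derivative cannot equal the whole set (a nonempty perfect subset of a complete metric space is uncountable).

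Pick such an isolated point $\lambda$ of $\sigma(T\!\upharpoonright\! W)$. Because $T\!\upharpoonright\! W$ is normal, isolated points of its spectrum are eigenvalues, as recalled in the excerpt. Hence there is a nonzero $x\in W$ with $Tx=\lambda x$, making $x$ an eigenvector of $T$ lying in $W$. But every eigenvector of $T$ belongs to $V$, so $x\in V\cap V^\perp=\{0\}$, a contradiction. The main obstacle is really step three, where one needs to invoke both the inclusion of the spectrum under restriction to a reducing subspace of a normal operator (already stated in the excerpt) and the fact that countable compact metric spaces are scattered; once those are in hand, the rest is routine.
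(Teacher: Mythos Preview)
Your argument is correct and follows essentially the same route as the paper's proof: both let $V$ (the paper calls it $E$) be the closed span of the eigenvectors, note that $V$ is reducing so $T\!\upharpoonright V^\perp$ is normal with spectrum contained in the countable set $\sigma(T)$, and then use that a nonempty countable compact set has an isolated point to produce an eigenvector in $V^\perp$, a contradiction. Your version is slightly more explicit in justifying why $V$ is reducing and why countable compact metric spaces are scattered; the paper also includes a separate (but logically unnecessary) observation that $\dim E>0$ before running the contradiction.
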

\begin{proof}
Let $E$ be the linear closed subspace
spanned by all eigenvectors of $T$ and let $E^\bot$ be its orthogonal complement, so $H=E\oplus E^\bot$. We wish to show that $H=E$. Since $\sigma(T)$ is a non-empty, countable closed subset of $\mathbb{C}$, it is Cantor-Bendixson analyzable and has an isolated point, whence $\dim(E)>0$. Note that $E$ is reducing for $T$. Assume, towards a contradiction, that $E^\perp \neq \{0\}$. Since $\sigma(T\! \upharpoonright {E^\perp}) \subseteq \sigma(T)$, it is at most countable. As above, since $\sigma(T\! \upharpoonright{E^\perp})$ is a non-empty countable closed subset of $\mathbb{C}$, it has an isolated point and thus $(E^\perp,T\! \upharpoonright{E^\perp})$ has an eigenvector, a contradiction to the definition of $E$.
\end{proof}

Once again, the following lemma is probably well-known, but we were unable to find a reference in the literature.

\begin{lem}\label{continuousfunctionalcalculus}
    If $T$ is a normal operator on $H$ and $\lambda\in \mathbb{C}\setminus \sigma(T)$ is such that $\|Tx-\lambda x\|<\epsilon$ for some unit vector $x\in H$, then $d(\lambda,\sigma(T))<\epsilon$.
\end{lem}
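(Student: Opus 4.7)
The plan is to exploit the fact that for a normal operator $N$, the operator norm equals the spectral radius, combined with the spectral mapping theorem applied to the resolvent $(T-\lambda I)^{-1}$, which is well-defined precisely because $\lambda \notin \sigma(T)$.

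First, I would set $S := (T - \lambda I)^{-1}$, noting that $S$ exists as a bounded operator on $H$ by the assumption $\lambda \notin \sigma(T)$, and that $S$ is itself normal (since the inverse of an invertible normal operator is normal: $T-\lambda I$ is normal because $T$ is, and normality is preserved under taking inverses). By the spectral mapping theorem for the holomorphic functional calculus applied to the map $\mu \mapsto 1/(\mu - \lambda)$, one has
\[
\sigma(S) \;=\; \left\{ \frac{1}{\mu - \lambda} \;:\; \mu \in \sigma(T) \right\}.
\]
Since $S$ is normal, $\|S\|$ equals its spectral radius, so
\[
\|S\| \;=\; \sup_{\mu \in \sigma(T)} \frac{1}{|\mu - \lambda|} \;=\; \frac{1}{d(\lambda,\sigma(T))}.
\]

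Next, I would combine this identity with the hypothesis. Set $y := Tx - \lambda x$, so $\|y\| < \epsilon$ and $x = S y$. Since $\|x\|=1$, we get
\[
1 \;=\; \|Sy\| \;\leq\; \|S\| \cdot \|y\| \;<\; \frac{\epsilon}{d(\lambda,\sigma(T))},
\]
which rearranges to $d(\lambda,\sigma(T)) < \epsilon$, as required.

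The only non-routine step is the norm computation $\|S\| = 1/d(\lambda,\sigma(T))$. This is standard material from \cite{conway} (the norm-equals-spectral-radius property of normal operators together with the spectral mapping theorem), so I do not anticipate any real obstacle; the lemma is essentially a packaging of these two facts. The reason the authors state it explicitly is presumably that they want the quantitative one-sided estimate, not the qualitative containment $\partial \sigma(T) \subseteq \sigma_{\mathrm{ap}}(T)$, since this quantitative version is what will later let them transfer approximate spectral information in the model-theoretic arguments.
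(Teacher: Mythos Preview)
Your argument is correct and essentially identical to the paper's: both compute $\|(T-\lambda I)^{-1}\|=1/d(\lambda,\sigma(T))$ (the paper via continuous functional calculus with $f(z)=1/(z-\lambda)$, you via spectral mapping plus norm-equals-spectral-radius, which amounts to the same thing) and then use $1=\|x\|\leq \|(T-\lambda I)^{-1}\|\cdot\|Tx-\lambda x\|$ to conclude.
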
  

\begin{proof}
    The assumption implies that $\|(T-\lambda I)^{-1}\|>\epsilon$.  On the other hand, by continuous functional calculus (see \cite[Theorem VIII.2.6]{conway}), setting $f(z):=1/(z-\lambda)$ for $z\in \sigma(T)$, one has $\|(T-\lambda I)^{-1}\|=\|f\|_\infty=1/d(\lambda,\sigma(T))$.
    \end{proof}

We will also make heavy use of the Spectral theorem for normal operators, which we now describe.  First, by a \textbf{projection valued measure (PVM)} we mean a function $E$ defined on the measurable sets of some measurable space $(X,\mathcal{B})$ into the set of projections of some Hilbert space $H$ such that:
\begin{enumerate}
    \item $E(\emptyset)=0$ and $E(X)=\operatorname{Id}_H$;
    \item $E(A\cap B)=E(A)E(B)$ for $A,B\in \mathcal{B}$; and
    \item $E(\bigcup_{n=1}^\infty A_n)=\sum_{n=1}^\infty E(A_n)$ whenever $(A_n)_{n\in \mathbb{N}}$ is a family of disjoint elements from $\mathcal{B}$.
\end{enumerate}

For a proof of the next fact, see \cite[Chapter IX, Section 1]{conway}.

\begin{fact}\label{fact: measures}
 Suppose that $E$ is a PVM defined on $(X,\mathcal{B})$ with values in the Hilbert space $H$.  Then:
 \begin{enumerate}
     \item For all $v,w\in H$, the function $\mu_{v,w}$ on $\mathcal{B}$ given by $\mu_{v,w}(A):=\langle E(A)v,w\rangle$ is a countably additive measure on $(X,\mathcal{B})$ and $\|\mu_{v,w}\|\leq \|v\|\|w\|$.  Setting $\mu_v:=\mu_{v,v}$, we have that $\mu_{v}$ is a positive measure with $\mu_{v}(X)=\|v\|^2$.
     \item If $\phi:X\to \mathbb{C}$ is a bounded $\mathcal{B}$-measurable function, then there is a unique bounded operator on $H$, denoted $\int_X \phi dE$, satisfying the following property:  whenever $\{A_1,\ldots,A_n\}$ is a finite partition of $X$ into elements of $\mathcal{B}$ for which $\sup\{|\phi(x)-\phi(x')| \ : \ x,x'\in A_i\}<\epsilon$ for all $i=1,\ldots,n$, then for any choice of $a_i\in A_i$, $i=1,\ldots,n$, we have $\|\int_X \phi dE-\sum_{i=1}^n \phi(a_i)E(A_i)\|<\epsilon$.
     \item For $\mu_{v,w}$ as in (1) and $\int_X \phi dE$ as in (2), we have that $$\left\langle \left(\int_X \phi dE\right) v,w\right\rangle=\int_X \phi d\mu_{v,w}.$$
 \end{enumerate}
\end{fact}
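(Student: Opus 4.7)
The plan is to treat the three clauses in order, since each builds on the previous one. The key technical ingredient throughout is the orthogonality of the ranges of $E(A)$ and $E(B)$ when $A\cap B=\emptyset$: from property (2) of a PVM we get $E(A)E(B)=E(A\cap B)=0$, so $E(A)H\perp E(B)H$, and in particular $\|E(A\cupdot B)v\|^2=\|E(A)v\|^2+\|E(B)v\|^2$ for disjoint $A,B$.

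For clause (1), finite additivity of $\mu_{v,w}$ is immediate from property (2), and countable additivity follows from property (3) combined with continuity of the inner product (applied to the SOT-convergent series $\sum_n E(A_n)v$). Setting $v=w$ gives $\mu_v(A)=\langle E(A)v,E(A)v\rangle=\|E(A)v\|^2\geq 0$, proving positivity, and $\mu_v(X)=\|E(X)v\|^2=\|v\|^2$. For the total-variation bound, given any measurable partition $\{A_i\}_{i=1}^n$ of $X$, Cauchy--Schwarz and the Pythagorean identity from the orthogonality remark give
\[
\sum_{i=1}^n|\mu_{v,w}(A_i)|\leq \sum_{i=1}^n\|E(A_i)v\|\,\|E(A_i)w\|\leq\Bigl(\sum_i\|E(A_i)v\|^2\Bigr)^{1/2}\Bigl(\sum_i\|E(A_i)w\|^2\Bigr)^{1/2}=\|v\|\,\|w\|.
\]
Taking the supremum over finite measurable partitions yields $\|\mu_{v,w}\|\leq \|v\|\|w\|$.

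For clause (2), I would first define $\int_X s\,dE:=\sum_{i=1}^n c_i E(A_i)$ for a simple function $s=\sum c_i\chi_{A_i}$ with $\{A_i\}$ a measurable partition, check well-definedness and linearity, and prove the norm bound $\|\int s\,dE\|\leq\|s\|_\infty$ using the orthogonal decomposition $v=\sum E(A_i)v$ to compute $\|\int s\,dE\,v\|^2=\sum|c_i|^2\|E(A_i)v\|^2\leq\|s\|_\infty^2\|v\|^2$. Then, for a general bounded $\mathcal{B}$-measurable $\phi$, approximate $\phi$ uniformly by simple functions $s_k$ and set $\int_X\phi\,dE:=\lim_k\int_X s_k\,dE$ in operator norm; completeness of $B(H)$ gives existence, the norm bound gives that the limit is independent of the approximating sequence, and the $\epsilon$-partition property is precisely the condition that the associated simple function $\sum\phi(a_i)\chi_{A_i}$ is within $\epsilon$ of $\phi$ in sup-norm. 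Uniqueness of the operator then follows because the $\epsilon$-partition condition forces the same limit.

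For clause (3), the identity is immediate on characteristic functions ($\langle E(A)v,w\rangle=\mu_{v,w}(A)$), extends to simple functions by linearity, and then passes to general bounded measurable $\phi$ by approximating in sup-norm: on the left one uses the operator-norm convergence from (2) together with continuity of the inner product, and on the right one uses $\|\mu_{v,w}\|\leq\|v\|\|w\|$ from (1) to bound $|\int(\phi-s_k)d\mu_{v,w}|\leq\|\phi-s_k\|_\infty\|v\|\|w\|\to 0$. The main obstacle I anticipate is the norm bound for $\int s\,dE$ on simple functions: without the crucial observation that $E(A_i)$ and $E(A_j)$ have orthogonal ranges for $i\neq j$, one only gets $\|s\|_\infty\cdot n$ rather than $\|s\|_\infty$, and the whole extension collapses; once this Pythagorean computation is in hand, the rest of the construction is a standard density argument.
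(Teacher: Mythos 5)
The paper records this as a ``Fact'' and gives no proof, instead citing \cite[Chapter IX, Section 1]{conway}; your argument is the standard textbook development of the integral against a projection-valued measure and is correct in substance. One small slip: you attribute finite additivity of $\mu_{v,w}$ to property (2) of a PVM, but property (2) ($E(A\cap B)=E(A)E(B)$) only yields the orthogonality $E(A)E(B)=0$ for disjoint $A,B$ --- both finite and countable additivity of $\mu_{v,w}$ come from property (3) together with continuity of the inner product, exactly as you then say for the countable case; everything else (the Pythagorean identity $\sum_i\|E(A_i)v\|^2=\|v\|^2$ giving the total-variation bound, the norm bound $\|\int s\,dE\|\le\|s\|_\infty$ on simple functions, the density-and-completeness extension, and the passage to clause (3) via $\|\mu_{v,w}\|\le\|v\|\|w\|$) is the right argument and is correctly identified as hinging on the orthogonality of the ranges $E(A_i)H$.
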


The following is one of the main facts about  normal operators; see, for example \cite[Chapter IX, Section 2]{conway}.

\begin{fact}[Spectral theorem for normal operators]
If $T$ is a normal operator on $H$, then there is a PVM $E$ defined on the Borel subsets of $\sigma(T)$ with values in $H$ such $\int_{\sigma(T)}zdE=T$
\end{fact}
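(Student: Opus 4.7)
The plan is to build the PVM via the Borel functional calculus, starting from the continuous functional calculus obtained via Gelfand duality. Concretely, let $\mathcal{A}\subseteq B(H)$ be the unital C*-subalgebra generated by $T$ and $T^*$. Since $T$ is normal, $\mathcal{A}$ is commutative, so by the Gelfand representation $\mathcal{A}\cong C(X)$ where $X$ is the maximal ideal space of $\mathcal{A}$. The evaluation map $\chi\mapsto \chi(T)$ from $X$ to $\sigma(T)$ is a homeomorphism (the spectral permanence theorem for C*-algebras, together with the fact that polynomials in $z,\bar z$ separate points of $\sigma(T)$), and under this identification the Gelfand transform of $T$ becomes the identity function $z\mapsto z$ on $\sigma(T)$. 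Pulling back yields an isometric unital $*$-homomorphism $\Phi\colon C(\sigma(T))\to B(H)$ with $\Phi(z)=T$ and $\Phi(\bar z)=T^*$; this is the continuous functional calculus.

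Next, I would bootstrap this into a Borel functional calculus. For each $v,w\in H$, the functional $f\mapsto \langle \Phi(f)v,w\rangle$ is linear on $C(\sigma(T))$ with norm at most $\|v\|\|w\|$, so the Riesz representation theorem produces a unique regular Borel complex measure $\mu_{v,w}$ on $\sigma(T)$ of total variation at most $\|v\|\|w\|$ such that
\[
\langle \Phi(f)v,w\rangle \;=\; \int_{\sigma(T)} f\,d\mu_{v,w} \qquad \text{for all } f\in C(\sigma(T)).
\]
For a bounded Borel function $\phi\colon \sigma(T)\to \mathbb{C}$, the assignment $(v,w)\mapsto \int \phi\,d\mu_{v,w}$ is then sesquilinear and bounded by $\|\phi\|_\infty \|v\|\|w\|$, so by the Riesz representation theorem for sesquilinear forms there is a unique bounded operator, which I will call $\widetilde{\Phi}(\phi)$, satisfying $\langle \widetilde{\Phi}(\phi)v,w\rangle = \int \phi\,d\mu_{v,w}$. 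By approximating bounded Borel functions pointwise a.e.\ by continuous functions and using dominated convergence, one checks that $\widetilde{\Phi}$ remains a $*$-homomorphism on the bounded Borel functions; crucially, $\widetilde{\Phi}(\phi\psi)=\widetilde{\Phi}(\phi)\widetilde{\Phi}(\psi)$.

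Now define $E(A):=\widetilde{\Phi}(\chi_A)$ for each Borel set $A\subseteq \sigma(T)$. Since $\chi_A$ is a real-valued idempotent, $E(A)$ is a self-adjoint idempotent, hence a projection. The relations $\chi_\emptyset=0$, $\chi_{\sigma(T)}=1$, and $\chi_{A\cap B}=\chi_A\chi_B$ yield properties (1) and (2) of a PVM directly from the fact that $\widetilde{\Phi}$ is a unital $*$-homomorphism. For countable additivity on disjoint Borel families $(A_n)$, one checks that $\sum_{n=1}^N E(A_n)v \to E(\bigcup_n A_n)v$ for each $v$ by applying the scalar identity $\mu_v(\bigcup A_n)=\sum \mu_v(A_n)$ together with the orthogonality $E(A_i)E(A_j)=0$ for $i\neq j$.

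Finally, to establish $\int_{\sigma(T)} z\, dE = T$, I would combine the construction of the operator $\int \phi\, dE$ from Fact~\ref{fact: measures}(2) with the continuous functional calculus: the Riemann-type approximating sums $\sum_i a_i E(A_i) = \widetilde{\Phi}\bigl(\sum_i a_i \chi_{A_i}\bigr)$ converge in operator norm to $\widetilde{\Phi}(z)=\Phi(z)=T$ as the partitions get finer. The main obstacle in this program is the extension from the continuous to the Borel functional calculus, specifically verifying multiplicativity of $\widetilde{\Phi}$ on bounded Borel functions, since this is what guarantees that $E$ takes values in projections rather than merely in self-adjoint operators; the rest of the argument is essentially a bookkeeping of Riesz representation and monotone/dominated convergence.
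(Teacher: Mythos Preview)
Your outline is the standard textbook route to the spectral theorem for normal operators and is essentially correct. Note, however, that the paper does not actually prove this statement: it is recorded as a \emph{Fact} with a reference to \cite[Chapter IX, Section 2]{conway}, so there is no ``paper's own proof'' to compare against. That said, the approach you describe---continuous functional calculus via Gelfand theory on the commutative unital $C^*$-algebra generated by $T$ and $T^*$, extension to a Borel functional calculus via Riesz representation, and then defining $E(A)=\widetilde{\Phi}(\chi_A)$---is precisely the development carried out in the cited chapter of Conway, so in that sense your argument matches the intended reference.

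One small remark: you correctly flag multiplicativity of $\widetilde{\Phi}$ on bounded Borel functions as the crux. The cleanest way to handle it is to fix $f\in C(\sigma(T))$ and observe that for all $g\in C(\sigma(T))$ one has $\int g\,d\mu_{\Phi(f)v,w}=\langle \Phi(g)\Phi(f)v,w\rangle=\int gf\,d\mu_{v,w}$, whence $d\mu_{\Phi(f)v,w}=f\,d\mu_{v,w}$; iterating this density argument (first in one variable, then the other) gives $\widetilde{\Phi}(\phi\psi)=\widetilde{\Phi}(\phi)\widetilde{\Phi}(\psi)$ for Borel $\phi,\psi$. With that in hand the rest of your sketch goes through.
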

The PVM in the previous theorem is unique modulo some mild requirements; see \cite[Theorem IX.2.2]{conway}.  We refer to this unique PVM as the \textbf{spectral measure associated to $T$} and sometimes refer to $E(A)$ as a \textbf{spectral projection associated to $T$}.  A fact about the spectral measure $E$ associated to $T$ is that $E(\mathcal{O})\not=0$ if $\mathcal{O}$ is a nonempty open subset of $\sigma(T)$.  It will be convenient to view $E$ as being defined on all Borel subsets of $\mathbb{C}$ by defining $E(\mathcal{O})=E(\mathcal{O}\cap \sigma(T))$; as a result, $E(\mathcal{O})=0$ if $\mathcal{O}\cap \sigma(T)=\emptyset$.

The following lemma will be used a couple of times throughout this paper; the model theorist will recognize the connection with definability and, in fact, this lemma will imply that eigenspaces corresponding to isolated points of the spectrum are definable sets (see Lemma \ref{isolateddefinable} below).

\begin{lem}\label{lem:almostnear}
For all $\epsilon,\eta>0$, there is $\delta>0$ such that, whenever $T:H\to H$ is normal, $\lambda\in \sigma(T)$, $U=B(\lambda;\eta)$, and $x\in H$ is a unit vector such that $\|Tx-\lambda x\|<\delta$, then there is $y\in E(U)(H)$ such that $\|x-y\|<\epsilon$.  
\end{lem}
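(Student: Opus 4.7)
The plan is to produce $y$ as the explicit spectral projection $E(U)x$ and estimate $\|x - E(U)x\|$ using the spectral measure. The key reason this should work is that $x - E(U)x$ lies in the range of $E(\sigma(T)\setminus U)$, and on that set the function $z \mapsto z - \lambda$ is bounded below in absolute value by $\eta$; hence a bound on $\|(T-\lambda I)x\|$ should translate into a bound on the mass of the scalar measure $\mu_x$ on $\sigma(T)\setminus U$.

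More concretely, let $E$ be the spectral measure of $T$ given by the Spectral Theorem, and define the positive measure $\mu_x$ on $\sigma(T)$ by $\mu_x(A) := \langle E(A)x, x\rangle = \|E(A)x\|^2$, as in Fact \ref{fact: measures}(1). By the functional calculus (Fact \ref{fact: measures}(2)--(3)) applied to $\phi(z) = z - \lambda$, together with the fact that $(T-\lambda I)^*(T-\lambda I) = \int_{\sigma(T)} |z-\lambda|^2 \, dE$, I would write
\[
\|(T-\lambda I)x\|^2 = \left\langle (T-\lambda I)^*(T-\lambda I)x, x\right\rangle = \int_{\sigma(T)} |z-\lambda|^2 \, d\mu_x(z).
\]
Since $|z-\lambda| \geq \eta$ for every $z \in \sigma(T)\setminus U$, restricting the integral to this set yields
\[
\eta^2 \, \mu_x(\sigma(T)\setminus U) \leq \|(T-\lambda I)x\|^2 < \delta^2.
\]

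Now set $y := E(U)x$, which plainly belongs to $E(U)(H)$. Because $E$ is a PVM with $E(\sigma(T)) = \operatorname{Id}_H$ and $U$ and $\sigma(T)\setminus U$ are disjoint (using the convention $E(U) = E(U \cap \sigma(T))$ from the paragraph following the Spectral Theorem), we have $x - y = E(\sigma(T)\setminus U)x$ and hence
\[
\|x - y\|^2 = \|E(\sigma(T)\setminus U)x\|^2 = \mu_x(\sigma(T)\setminus U) < \delta^2/\eta^2.
\]
Choosing $\delta := \epsilon \eta$ at the outset therefore gives $\|x - y\| < \epsilon$, as desired. Note that $\delta$ depends only on $\epsilon$ and $\eta$, not on $T$, $\lambda$, or $x$, which is exactly the uniformity required by the statement.

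I do not anticipate a serious obstacle here; the whole argument reduces to the standard Chebyshev-style estimate for the scalar measure $\mu_x$, and the only mildly delicate point is confirming that $|z - \lambda| \geq \eta$ on $\sigma(T)\setminus U$ really is the correct estimate to use (it is, because $U = B(\lambda;\eta)$). The hypothesis $\lambda \in \sigma(T)$ is not needed for the inequality itself; it is presumably included so that $E(U) \neq 0$ and the conclusion is not vacuous.
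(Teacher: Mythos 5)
Your argument is correct, and it is a genuinely cleaner route than the one the paper takes, even though both proofs rest on the same underlying fact (the spectral theorem and the $\eta$-gap between $\lambda$ and $\sigma(T)\setminus U$). The paper also decomposes $x = y+z$ with $y \in E(U)(H)$ and $z = E(U^c)x$, but then it does not pass to the scalar measure $\mu_x$; instead it approximates $T$ on $E(U)(H)^\perp$ by a finite sum $\sum_i \lambda_i E(I_i)$ over a partition of $\sigma(T)\setminus U$ into pieces of diameter at most $\eta/2$, uses the orthogonality $\|Tx-\lambda x\|^2 = \|Ty-\lambda y\|^2 + \|Tz-\lambda z\|^2$, and then a triangle-inequality chase to extract $\eta\|z\| \leq \delta + (\eta/2)\|z\|$, yielding $\delta = \epsilon\eta/2$. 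Your proof instead invokes the multiplicativity of the functional calculus to get the exact identity $\|(T-\lambda)x\|^2 = \int |z-\lambda|^2\, d\mu_x$ and then applies a one-line Chebyshev estimate; this avoids the partition bookkeeping entirely and even produces the sharper constant $\delta = \epsilon\eta$. One small point worth stating explicitly if you write this up: you rely on the fact that $\int \phi\,dE \cdot \int \psi\,dE = \int \phi\psi\,dE$ for bounded Borel $\phi, \psi$, which goes slightly beyond what Fact \ref{fact: measures} literally records but is of course standard for the Borel functional calculus of a normal operator.
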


\begin{proof}
We claim that $\delta:=\epsilon\cdot (\eta/2)$ satisfies the conclusion of the lemma.  Write $x=y+z$, with $y\in E(U)(H)$ and $z\in E(U^c)(H)$.  Set $S:=T\upharpoonright E(U)(H)^\perp$, so $\sigma(S)\subseteq \{\lambda'\in K \ : \ |\lambda-\lambda'|\geq \eta\}$.  Take a partition $I_1,\ldots,I_t$ of $\sigma(S)$ such that each $I_i$ has diameter at most $\eta/2$ and choose $\lambda_i\in I_i$ for all $i=1,\ldots,t$.  Then $\|Tz-\sum_i \lambda_i E(I_i)(z)\|<(\eta/2)\|z\|$.  On the other hand, $$\|\lambda z-\sum_i \lambda_i E(I_i)(z)\|^2=\|\sum_i \lambda E(I_i)(z)-\sum_i \lambda_i E(I_i)(z)\|^2\geq \eta^2\|z\|^2.$$  Finally, note that, since $E(U)(H)$ and $E(U^c)(H)$ are $T$-invariant, we have $$\|Tx-\lambda x\|^2=\|Ty-\lambda y\|^2+\|Tz-\lambda z\|^2,$$ and thus $\|Tz-\lambda z\|^2\leq \|Tx-\lambda x\|^2<\delta^2$.  Putting this all together, we arrive at
$$\eta \|z\|\leq \|\lambda z-\sum_i \lambda_iE(I_i)(z)\|\leq \|\lambda z-Tz\|+\|Tz-\sum_i \lambda_i E(I_i)(z)\|<\delta+(\eta/2)\|z\|.$$  It follows that $\|z\|<\delta/(\eta/2)=\epsilon$.
\end{proof}

Crucial to the classification of normal operators up to elementary equivalence is the following notion:

\begin{defn}
Suppose that $T_i:H_i\to H_i$ are bounded operators for $i=1,2$.  We say  $T_1$ and $T_2$ are \textbf{spectrally equivalent} if:
\begin{enumerate}
    \item $\sigma(T_1)=\sigma(T_2)$, and
    \item for each isolated point $\lambda\in \operatorname{Isol}(\sigma(T_1))$, the dimensions of the eigenspaces $H_{1,\lambda}:=\ker(T_1-\lambda)$ and $H_{2,\lambda}:=\ker(T_2-\lambda)$ are either the same finite number or are both of infinite dimension.
\end{enumerate}
\end{defn}

The following fact is a combination of results that can be found in \cite{GePa}, \cite{H1}, and \cite{H3}.

\begin{fact}\label{fact:WNB}
Suppose that $T_i:H_i\to H_i$ is a normal operator for $i=1,2$ with associated spectral measures $E_1$ and $E_2$, viewed as defined on all Borel subsets of $\mathbb{C}$.  Then the following are equivalent:
\begin{enumerate}
    \item $T_1$ and $T_2$ are \textbf{approximately unitarily equivalent}, meaning that there is a sequence of unitary operators $U_n:H_1\to H_2$ such that $$\lim_{n\to \infty}\|U_nT_1U_n^*-T_2\|=0.$$
    \item For each open set $\mathcal O\subseteq \mathbb C$, the spectral projections $E_1(\mathcal O)$ and $E_2(\mathcal O)$ have the same rank.
\end{enumerate}
If, in addition, the spaces $H_1$ and $H_2$ are separable, (1) and (2) are equivalent to:
\begin{enumerate}
    \item[(3)] $T_1$ and $T_2$ are spectrally equivalent.
\end{enumerate}
\end{fact}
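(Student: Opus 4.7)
The strategy is to prove $(1) \Rightarrow (2)$ unconditionally, obtain $(2) \Rightarrow (3)$ in the separable case by elementary arguments, and invoke the Weyl--von Neumann--Berg theorem (cited from \cite{GePa}) for the hard direction $(3) \Rightarrow (1)$; the non-separable $(2) \Rightarrow (1)$ then follows from a separable reduction.

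For $(1) \Rightarrow (2)$, the key point is that approximate unitary equivalence lifts through the continuous functional calculus: if $\|U_n T_1 U_n^* - T_2\| \to 0$, then $\|U_n f(T_1) U_n^* - f(T_2)\| \to 0$ for every continuous $f$ on a neighborhood of $\sigma(T_1) \cup \sigma(T_2)$. Given an open set $\mathcal{O} \subseteq \mathbb{C}$, choose an open $\mathcal{O}'$ with $\overline{\mathcal{O}'}$ compact and contained in $\mathcal{O}$, and a continuous $f : \mathbb{C} \to [0,1]$ equal to $1$ on $\overline{\mathcal{O}'}$ and supported in a compact subset of $\mathcal{O}$. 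Then $E_i(\mathcal{O}') \leq f(T_i) \leq E_i(\mathcal{O})$ as positive operators, whence $\mathrm{rank}\, E_i(\mathcal{O}') \leq \mathrm{rank}\, f(T_i) \leq \mathrm{rank}\, E_i(\mathcal{O})$. Since unitary conjugation preserves rank and rank is lower semicontinuous under operator-norm convergence (given $k$ linearly independent vectors $A x_1, \dots, A x_k$ in the range of the limit, the vectors $A_n x_1, \dots, A_n x_k$ are eventually linearly independent), we obtain $\mathrm{rank}\, f(T_2) \leq \mathrm{rank}\, f(T_1)$, and symmetry yields equality. Letting $\mathcal{O}'$ range over an increasing sequence exhausting $\mathcal{O}$, so that the corresponding projections increase strongly to $E_i(\mathcal{O})$, gives $\mathrm{rank}\, E_1(\mathcal{O}) = \mathrm{rank}\, E_2(\mathcal{O})$.

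For $(2) \Rightarrow (3)$ in the separable case, first I would deduce $\sigma(T_1) = \sigma(T_2)$: the support of $E_i$ is exactly $\sigma(T_i)$, so if $\lambda \in \sigma(T_1)$ then every open neighborhood $U$ of $\lambda$ has $E_1(U) \neq 0$, whence by (2) also $E_2(U) \neq 0$, and so $U \cap \sigma(T_2) \neq \emptyset$; closedness of $\sigma(T_2)$ then gives $\lambda \in \sigma(T_2)$. For an isolated point $\lambda$ of the common spectrum, choose $r > 0$ so small that $B(\lambda,r) \cap \sigma(T_i) = \{\lambda\}$; then $E_i(B(\lambda,r)) = E_i(\{\lambda\})$ is the projection onto $\ker(T_i - \lambda)$, and (2) forces the eigenspace dimensions to match. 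The converse $(3) \Rightarrow (2)$ in the separable case then follows from $(3) \Rightarrow (1) \Rightarrow (2)$.

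The main obstacle is the implication $(3) \Rightarrow (1)$, which is precisely the Weyl--von Neumann--Berg theorem; I would cite its proof from \cite{GePa} rather than reprove it. The argument there first handles the ``diagonal'' part of $T$ corresponding to isolated eigenvalues and then uses Berg's compact perturbation technique to approximately diagonalize the restriction of $T$ to the orthogonal complement of the span of the eigenvectors. For the non-separable direction $(2) \Rightarrow (1)$, one decomposes each $H_i$ into an orthogonal direct sum of separable reducing subspaces via an iterated cyclic-vector construction, applies the separable theorem on each piece, and patches the unitaries together using the global multiplicity data in (2) to pair summands of equal spectral type across $H_1$ and $H_2$. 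This patching is the most technical step, but is essentially routine once the separable case is in hand.
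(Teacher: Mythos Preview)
The paper does not prove this statement; it is labeled a \emph{Fact} and attributed to \cite{GePa}, \cite{H1}, and \cite{H3}, with the remark that the nontrivial direction is sometimes derived from (or identified with) the Weyl--von Neumann--Berg theorem. So there is no proof in the paper to compare your proposal against: the paper simply imports the result from the literature, exactly as you do for $(3)\Rightarrow(1)$.

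Your outline is sound and matches the division of labor in the cited sources: $(3)\Rightarrow(1)$ is the content of \cite{GePa}, the non-separable case is handled in \cite{H3}, and the remaining implications are elementary. Two minor points. In your $(1)\Rightarrow(2)$ argument, the parenthetical justification for lower semicontinuity of rank (via $k$ linearly independent vectors) only literally treats finite ranks; for the non-separable case, where ranks are arbitrary cardinals, one should instead note that $f(T_1)\geq E_1(\overline{\mathcal O'})$ implies $f(T_1)$ is bounded below by $1$ on $E_1(\overline{\mathcal O'})(H_1)$, hence so is its unitary conjugate, and therefore for large $n$ the operator $f(T_2)$ is injective on a subspace of that full dimension. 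Also, calling the non-separable patching ``essentially routine'' undersells it: arranging the pairing of separable reducing summands across $H_1$ and $H_2$ from the rank data in (2) is precisely the content of Hadwin's paper \cite{H3} and is not entirely trivial. Neither point is a genuine gap in your strategy.
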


Some authors derive (the nontrival direction of) Fact \ref{fact:WNB} as a consequence of the Weyl-von Neumann-Berg theorem; see, for example \cite[Section II.4]{Da}.  For that reason, the previous fact is sometimes often referred to as the Weyl-von Neumann-Berg theorem.


\section{Theories of normal operators}\label{sec:normal operators}

Throughout, we let $L:=\{0,-,\dot2, \frac{x+y}{2},i,\| \ \ \|,T\}$ be the language of unit balls of Hilbert spaces over the complex numbers expanded by a unary function symbol $T$ with modulus of uniform continuity the identity function. We write $\Sigma_0$ for the $L$-theory whose models are of the form $(H,T)$, where $H$ is (the unit ball of a) Hilbert space and $T$ is a bounded operator on $H$ with $\|T\|\leq 1$. Note that $T$ sends the unit ball to the unit ball and the action of $T$ is determined on the whole Hilbert space by the restriction of its action to the unit ball. Note also that $\Sigma_0$ is a universal theory. According to this language, all quantifiers range over the unit ball. That being said, we will be very liberal when dealing with actual formulas and use expressions like $\langle v,w \rangle$ even though they have values in $[-1,1]$
and sometimes, for clarity, we will write $d(v,w)=\sqrt{2}$ for orthonormal vectors $v,w$ even though it formally has the value $d(v,w)=\|\frac{v-w}{2}\|=\sqrt{2}/2$. When considering more general bounded operators $T$, one can 
analyze them with this approach by working with $T'=T/\|T\|$ instead of $T$.

To axiomatize infinite-dimensional Hilbert spaces, we need to add the scheme that says that there are infinitely many orthonormal vectors:
$$\inf_{x_1}\cdots\inf_{x_n} \max_{1\leq i\leq j\leq n}|\langle x_i,x_j\rangle-\delta_{ij}|=0.$$
Note that the scheme consists of existential sentences. Several subspaces of the expansions under consideration will be finite-dimensional, but \emph{the structure itself will always be infinite-dimensional unless we explicitly state otherwise}.

\begin{prop}\label{normalaxioms}
There is an $\forall\exists$-axiomatizable $L$-theory $\Sigma_{\operatorname{normal}}$ whose models are precisely the models $(H,T)$ of $\Sigma_0$ with $T$ a normal operator.
\end{prop}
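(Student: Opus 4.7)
The plan is to avoid adding $T^*$ to the language by characterizing normality through the identity $\|Tx\|=\|T^*x\|$ for all $x$, and then to eliminate the reference to $T^*$ via the variational formula
\[
\|T^*x\| \;=\; \sup_{\|y\|\le 1}|\langle x,Ty\rangle|,
\]
which is immediate from the adjoint relation $\langle T^*x,y\rangle=\langle x,Ty\rangle$ together with $\|v\|=\sup_{\|y\|\le 1}|\langle v,y\rangle|$. Splitting the equation $\|Tx\|=\|T^*x\|$ into the two inequalities $\|T^*x\|\le\|Tx\|$ and $\|T^*x\|\ge\|Tx\|$ lets each inequality be rendered as a single $L$-sentence (with no $T^*$) of $\forall\exists$ form.

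Concretely, I would define $\Sigma_{\operatorname{normal}}$ to be $\Sigma_0$ together with the two sentences
\[
\phi_1\;:=\;\sup_x\sup_y\bigl(\,|\langle x,Ty\rangle|\;\dminus\;\|Tx\|\,\bigr)=0
\]
and
\[
\phi_2\;:=\;\sup_x\inf_y\bigl(\,\|Tx\|\;\dminus\;|\langle x,Ty\rangle|\,\bigr)=0.
\]
Here $|\langle x,Ty\rangle|$ is a legitimate term in $L$: $\operatorname{Re}\langle x,y\rangle$ and $\operatorname{Im}\langle x,y\rangle$ are definable from the norm by polarization, and $\sqrt{a^{2}+b^{2}}$ is a continuous connective. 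Both sentences are of $\forall\exists$ complexity; in fact $\phi_1$ is purely universal, and $\phi_2$ has prefix $\sup_x\inf_y$, which in continuous logic is exactly $\forall\exists$.

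To check correctness, suppose first that $(H,T)\models\phi_1\wedge\phi_2$. Axiom $\phi_1$ gives $|\langle x,Ty\rangle|\le\|Tx\|$ for all unit vectors $x,y$, so taking the supremum over $y$ yields $\|T^*x\|\le\|Tx\|$. Axiom $\phi_2$ asserts that for every $x$ and every $\varepsilon>0$ some unit $y$ satisfies $|\langle x,Ty\rangle|>\|Tx\|-\varepsilon$, and hence $\|T^*x\|\ge\|Tx\|$. Combined, $\|Tx\|=\|T^*x\|$ for all $x$, which is the standard characterization of normality recalled in Section~\ref{sec:prelim}. Conversely, if $T$ is normal then $\|T^*x\|=\|Tx\|$, and combining this with the variational formula immediately produces both axioms.

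The main (modest) obstacle is precisely the absence of $T^*$ from $L$, which the variational identity dispatches. A secondary point worth a sentence of care is that the $\sup_x\inf_y$ prefix of $\phi_2$ genuinely falls into the $\forall\exists$ fragment: this holds because in continuous logic $\sup$ plays the role of $\forall$ and $\inf$ the role of $\exists$, so no rewriting of quantifier order is required.
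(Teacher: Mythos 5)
Your argument is correct and takes a genuinely different route from the paper's. Both reduce normality to $\|Tx\|=\|T^*x\|$ and then must cope with the absence of $T^*$ from $L$. The paper quantifies over an approximate witness for $T^*x$: for each $n$ it adds a sentence $\sigma_n$ asserting, roughly, that if $\sup_z|\langle Tz,x\rangle-\langle z,y\rangle|<1/n$, then $|\|Tx\|-\|y\||\le 1/n$; verifying this scheme for a normal $T$ requires the small estimate $\|T^*x-y\|<1/n$ linking the approximate witness $y$ to the true adjoint. You instead plug the variational identity $\|T^*x\|=\sup_{\|y\|\le 1}|\langle x,Ty\rangle|$ directly into the two inequalities $\|T^*x\|\le\|Tx\|$ and $\|T^*x\|\ge\|Tx\|$, producing a universal sentence $\phi_1$ and a $\sup\inf$ sentence $\phi_2$. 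Your axiomatization is finite rather than a countable scheme, needs no approximation step, and wears its $\forall\exists$ form on its sleeve, whereas in the paper one has to rewrite $1/n\dminus\sup_z(\cdot)$ as $\inf_z(1/n\dminus(\cdot))$, or invoke closure under unions of chains, to see the complexity class. The paper also offers a single-axiom variant of Proposition~\ref{normalaxioms}, but only after the machinery of Proposition~\ref{graph} establishing the definability of $\Gamma(T^*)$; your two sentences avoid that dependency.
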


\begin{proof}
For each $n\geq 1$, consider the $L$-sentence $\sigma_n$ given by
$$\sup_x\sup_y\min(1/n\dminus \sup_z\left|\langle Tz,x\rangle -\langle z,y\rangle\right|,|\|Tx\|-\|y\||\dminus 1/n).$$  We claim that $(H,T)\models \sigma_n$ for all $n\geq 1$ if and only if $T$ is normal.  Indeed, first suppose that $(H,T)\models \sigma_n$ for all $n\geq 1$ and let $x\in H$ be such that $\|x\|\leq 1$.  Since $\sup_z|\langle Tz,x\rangle-\langle z,T^*x\rangle|=0$, letting $y=T^*x$ in the above axiom, we have that $|\|Tx\|-\|T^*x\||\leq 1/n$ for all $n\geq 1$, whence $\|Tx\|=\|T^*x\|$; since $x$ was arbitrary, we see that $T$ is normal.  Conversely, suppose that $T$ is normal and let $x,y\in H$ be such that $\|x\|,\|y\|\leq 1$ and $\sup_z |\langle Tz,x\rangle-\langle z,y\rangle|<1/n$.  It follows that $\sup_z|\langle z,T^*x-y\rangle|<1/n$, whence $\|T^*x-y\|<1/n$ and thus $|\|T^*x\|-\|y\||<1/n$.  Since $T$ is normal, $\|T^*x\|=\|Tx\|$, whence $|\|Tx\|-\|y\||<1/n$, as desired.
\end{proof}

\begin{rem}
By Fact \ref{normalreducing}, one cannot expect to axiomatize the class of Hilbert spaces expanded by normal operators by a universal set of axioms. It is also easy to see that one can extend a Hilbert space expanded by a normal operator to a larger Hilbert space expanded with an operator that is not normal, for example by taking a direct sum with a Hilbert space expanded by an operator which is not normal; it follows that the class cannot be axiomatized by existential axioms either. On the other hand, the class is clearly closed under unions of chains, which implies that an axiomatization, should it exist, can be assumed to consist of $\forall\exists$-sentences.
\end{rem}

The idea behind the proof of Proposition \ref{normalaxioms} can be used to show the following.  We use the terminology around definability established in \cite{goldbring}.

\begin{prop}\label{graph}
The $\Sigma_0$-functor which assigns to a model $(H,T)$ of $\Sigma_0$ the graph $\Gamma(T^*)$ of $T^*$ is a $\Sigma_0$-definable set.
\end{prop}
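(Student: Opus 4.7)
The plan is to realize $\Gamma(T^*)$ as the zeroset of an explicit $L$-formula and to verify the standard zeroset-with-modulus criterion for a $\Sigma_0$-definable set from \cite{goldbring}. The candidate formula is precisely the one already appearing in the proof of Proposition \ref{normalaxioms}:
$$\varphi(x,y) := \sup_{z}\bigl|\langle Tz, x\rangle - \langle z, y\rangle\bigr|,$$
where the sup ranges, as always, over the unit ball. Since $\langle\cdot,\cdot\rangle$ is recovered from $\|\cdot\|$ by polarization and everything else is already in $L$, $\varphi$ is an $L$-formula (after a harmless rescaling to bring its values into $[0,1]$).

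The key observation, implicit in the proof of Proposition \ref{normalaxioms}, is that in every model $(H,T)\models\Sigma_0$ one has
$$\varphi(x,y)=\|T^*x-y\|.$$
Indeed, rewriting $\langle Tz,x\rangle-\langle z,y\rangle=\langle z, T^*x-y\rangle$ and invoking the standard duality $\sup_{\|z\|\le 1}|\langle z,w\rangle|=\|w\|$ with $w:=T^*x-y$ (whose norm is at most $2$, so the unit-ball sup still saturates at $w/\|w\|$) yields the claim. In particular, the zeroset of $\varphi$ in $(H,T)$ equals $\Gamma(T^*)$, uniformly across all models of $\Sigma_0$.

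Definability will then follow immediately: since $(x,T^*x)\in \Gamma(T^*)$ in every model, taking it as a witness gives
$$d\bigl((x,y),\Gamma(T^*)\bigr)\le d\bigl((x,y),(x,T^*x)\bigr)\le C\cdot\|T^*x-y\|=C\cdot\varphi(x,y)$$
for a fixed constant $C$ relating the operator norm to the unit-ball metric used in this paper. This is a linear modulus vanishing at $0$, which is more than enough to apply the zeroset criterion for definable sets from \cite{goldbring}, yielding $\Sigma_0$-definability functorially in $(H,T)$. There is no significant obstacle here: the substantive content was having the right formula in hand (inherited from the proof of Proposition \ref{normalaxioms}), and the only care needed is to confirm that the inner-product expressions are legitimate $L$-formulas via polarization and that the sup taken over the unit ball still computes the full norm $\|T^*x-y\|$.
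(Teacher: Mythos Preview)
Your proposal is correct and follows essentially the same approach as the paper: both use the formula $\varphi(x,y)=\sup_z|\langle Tz,x\rangle-\langle z,y\rangle|$, identify its zeroset with $\Gamma(T^*)$, and verify the almost-near property by taking $(x,T^*x)$ as the nearby point. Your version is slightly more explicit in noting the exact equality $\varphi(x,y)=\|T^*x-y\|$ via duality and in tracking the metric-rescaling constant, but the argument is the same.
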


\begin{proof}
Let $\varphi(x,y)$ be the formula $\sup_z |\langle Tz,x\rangle-\langle z,y\rangle|$.  The zeroset of $\varphi$ is the graph of $T^*$.  We must show that $\varphi$ is an almost-near formula.  Towards that end, suppose that $\sup_z |\langle Tz,x\rangle-\langle z,y\rangle|<\epsilon$.  Then, as in the proof of the previous proposition, we have $\|T^*x-y\|<\epsilon$ and thus $d((x,y),(x,T^*x))<\epsilon$.
\end{proof}

\begin{rem}

\

\begin{enumerate}
    \item Proposition \ref{graph} allows for an alternative proof of Proposition \ref{normalaxioms}.  Indeed, one can simply add to $\Sigma_0$ the single axiom $\sup_{(x,y)\in \Gamma(T^*)} |\|Tx\|-\|y\||=0$.  Note that the sentence appearing in this axiom is a $\Sigma_0$-formula.
    \item The definition of $\Gamma(T^*)$ appearing in the proof of Proposition \ref{graph} is universal.  In general, the graph of $T^*$ cannot be defined by a quantifier-free $\Sigma_0$-formula.  Indeed, if it were, then any inclusion $(H_1,T_1)\subseteq (H_2,T_2)$ between models of $\Sigma_0$ would necessarily satisfy that $T_1^*=T_2^*\upharpoonright \! H_1$ and thus $H_1$ would be $T_2^*$-invariant.  However, as pointed out in the previous section, this is not true in general.  That being said, by Fact \ref{normalreducing}, if $(H_1,T_1),(H_2,T_2)\models \Sigma_{\operatorname{normal}}$ are such that $(H_1,T_1)\subseteq (H_2,T_2)$, we do have that $T_1^*=T_2^*\! \upharpoonright H_1$.
\end{enumerate}
\end{rem}

Lemma \ref{lem:almostnear} above immediately implies the following:

\begin{prop}\label{isolateddefinable}
Take $(H,T)\models \Sigma_{\operatorname{normal}}$ and $\lambda\in \operatorname{Isol}(\sigma(T))$ with associated eigenspace $H_\lambda$. Then $H_\lambda$ is a quantifier-free $0$-definable subset of $(H,T)$.  Moreover, the definition of $H_\lambda$ depends only on a lower bound on $\operatorname{dist}(\lambda,\sigma(T)\setminus \{\lambda\})$.
\end{prop}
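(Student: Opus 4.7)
The plan is to exhibit $H_\lambda$ as the zero set of the quantifier-free formula $\varphi(x) := \|Tx - \lambda x\|$ and to verify that $\varphi$ is an almost-near formula for $H_\lambda$ in the sense of \cite{goldbring}; this will yield both the $0$-definability and the uniformity clause simultaneously. Under the paper's liberal reading of the language (scalar multiplication by the fixed $\lambda\in\mathbb{C}$ is uniformly approximable by $L$-terms built from $\dot 2$, $\tfrac{x+y}{2}$, and $i$), $\varphi$ counts as a quantifier-free $L$-formula without parameters, and its zero set is clearly $H_\lambda$.

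Fix any $\eta$ with $0 < \eta \leq \operatorname{dist}(\lambda,\sigma(T)\setminus\{\lambda\})$, so that $U := B(\lambda;\eta)$ meets $\sigma(T)$ only at $\lambda$ and hence $E(U)(H) = H_\lambda$. Given $\epsilon > 0$, I would apply Lemma \ref{lem:almostnear} with the pair $(\epsilon/2,\eta)$ to obtain $\delta' > 0$ such that, uniformly across $(H,T)\models \Sigma_{\operatorname{normal}}$ with $\lambda\in\sigma(T)$ and $\operatorname{dist}(\lambda,\sigma(T)\setminus\{\lambda\}) \geq \eta$, every unit vector $u$ with $\|Tu - \lambda u\| < \delta'$ lies within $\epsilon/2$ of $H_\lambda$. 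Set $\delta := \epsilon\delta'$, and suppose $x$ is in the unit ball with $\varphi(x) < \delta$. If $\|x\| < \epsilon$, then $0 \in H_\lambda$ already gives $\operatorname{dist}(x,H_\lambda) \leq \|x\| < \epsilon$. Otherwise, $\hat x := x/\|x\|$ is a unit vector satisfying $\|T\hat x - \lambda\hat x\| = \varphi(x)/\|x\| < \delta/\epsilon = \delta'$, so Lemma \ref{lem:almostnear} produces $\hat y \in H_\lambda$ with $\|\hat x - \hat y\| < \epsilon/2$; then $\operatorname{dist}(x,H_\lambda) = \|x\|\cdot\operatorname{dist}(\hat x,H_\lambda) < \|x\|\cdot\epsilon/2 \leq \epsilon/2$, with the orthogonal projection of $x$ onto $H_\lambda$ realizing this distance and lying in the unit ball of $H_\lambda$ (its norm is at most $\|x\| \leq 1$).

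This establishes the almost-near property, whence $H_\lambda$ is a quantifier-free $0$-definable subset of $(H,T)$. Inspection of the argument shows that the modulus $\delta$ depends only on $\epsilon$ and on the chosen lower bound $\eta$, proving the uniformity clause. The only (minor) obstacle is that Lemma \ref{lem:almostnear} is stated for unit vectors, so one has to normalize $x$; this necessitates the case split on $\|x\|$ and the rescaling $\delta = \epsilon\delta'$ rather than $\delta = \delta'$, but the small-norm branch is handled trivially by $0 \in H_\lambda$.
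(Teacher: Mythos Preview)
Your argument is correct and follows exactly the route the paper intends: the paper's proof is literally the one-line remark that Lemma \ref{lem:almostnear} immediately implies the proposition, and you have spelled out the details of that implication. One small remark: your case split on $\|x\|$ is unneeded, since the proof of Lemma \ref{lem:almostnear} (with $\delta=\epsilon\cdot\eta/2$) never actually uses the hypothesis $\|x\|=1$, so the lemma already applies to every $x$ in the ball; but your normalization workaround is harmless and the conclusion is the same.
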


\begin{lem}
Assume that $\lambda\in \operatorname{Isol}(\sigma(T))$ is such that $\dim(H_\lambda)=n\in \mathbb N$. Then for any $x\in H_\lambda$, we have $x\in \acl(\emptyset)$.  More generally, if $\{\lambda_i:i\in I\}$ are distinct elements of $\operatorname{Isol}(\sigma(T))$ with $\dim(H_{\lambda_i})\in \mathbb N$ for all $i\in I$, and $x\in H$ satisfies $x=\sum_{i\in I}P_{H_{\lambda_i}}(x)$, then $x\in \acl(\emptyset)$.
\end{lem}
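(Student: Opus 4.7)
The plan is to derive both assertions from Proposition \ref{isolateddefinable} via the standard continuous-logic principle that any element of a $0$-definable, metrically compact subset of the structure lies in $\acl(\emptyset)$, together with the fact that $\acl(\emptyset)$ is norm-closed.

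For the first assertion, I would invoke Proposition \ref{isolateddefinable} to conclude that $H_\lambda$ is quantifier-free $0$-definable in $(H,T)$. Since $\dim(H_\lambda)=n<\infty$, $H_\lambda$ viewed inside the unit-ball structure is norm-compact (finite-dimensional closed balls being compact), so any $x\in H_\lambda$ belongs to a $0$-definable compact set, which forces $x\in\acl(\emptyset)$. The underlying principle is elementary: total boundedness of a $0$-definable set transfers to every elementary extension, while the set itself remains closed as the zero set of a continuous predicate, so its interpretation is compact throughout the monster, and hence the type-realization set of $x$ (contained in it) is compact.

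For the general assertion, I would first record two standard consequences of Proposition \ref{isolateddefinable}: (i) since $T$ is normal, distinct eigenspaces $H_{\lambda_i}, H_{\lambda_j}$ are orthogonal, so for any finite $F\subseteq I$ the subspace $V_F:=\bigoplus_{i\in F}H_{\lambda_i}$ is an internal orthogonal direct sum; and (ii) each orthogonal projection $P_i:=P_{H_{\lambda_i}}$ is a $0$-definable function, as is standard for orthogonal projection onto a $0$-definable closed subspace of a Hilbert space. Consequently $V_F$ is $0$-definable with distance predicate $d(v,V_F)=\|v-\sum_{i\in F}P_i(v)\|$ and, being finite-dimensional, compact inside the unit-ball structure; the first-assertion argument then gives $V_F\subseteq\acl(\emptyset)$.

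Finally, given $x$ with $x=\sum_{i\in I}P_i(x)$ and $\|x\|\le 1$, orthogonality yields $\sum_{i\in I}\|P_i(x)\|^2=\|x\|^2<\infty$, so at most countably many $P_i(x)$ are nonzero; enumerating these and setting $s_n:=\sum_{i=1}^n P_i(x)$, each $s_n$ lies in some finite $V_F\subseteq\acl(\emptyset)$ while $s_n\to x$ in norm. Since $\acl(\emptyset)$ is norm-closed in continuous logic, I conclude $x\in\acl(\emptyset)$. The main technical point to verify will be the $0$-definability of the $P_i$ and of the sums $V_F$; both are routine consequences of Proposition \ref{isolateddefinable} together with the standard continuous-logic treatment of Hilbert spaces, so I expect this to be a bookkeeping obstacle rather than a conceptual one.
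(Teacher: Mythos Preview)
Your proposal is correct and follows essentially the same approach as the paper: both arguments use Proposition \ref{isolateddefinable} to see that each $H_{\lambda_i}$ is $0$-definable and finite-dimensional, hence compact, so its elements lie in $\acl(\emptyset)$. For the general statement the paper is slightly more direct---it simply observes that each $P_{H_{\lambda_i}}(x)\in\acl(\emptyset)$ by the first part and then concludes $x\in\dcl(P_{H_{\lambda_i}}(x):i\in I)\subseteq\acl(\emptyset)$, whereas you pass through the finite sums $V_F$ and invoke norm-closedness of $\acl(\emptyset)$; both routes are standard and amount to the same thing.
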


\begin{proof}
    
Let $(H',T')\succeq (H,T)$ 
be sufficiently saturated. 
By Proposition \ref{isolateddefinable}, we have $\dim(H'_\lambda)=n$.
Since the unit ball in a finite dimensional Hilbert space is compact, the orbit of any $x\in H_\lambda$ is a compact subset in $(H',T')$, and thus $x\in \acl(\emptyset)$.  For the second statement, if $\{\lambda_i:i\in I\}$ are different isolated points in $\sigma(T)$ with $\dim(H_{\lambda_i})\in \mathbb N$ for each $i\in I$, then $P_{H_{\lambda_i}}(x)\in \acl(\emptyset)$ for all $x\in H$ and $i\in I$. If $x=\sum_{i\in I}P_{H_{\lambda_i}}(x)$, then $x\in \dcl(P_{H_{\lambda_i}}(x) :i\in I)\subseteq \acl(\emptyset)$.
\end{proof}

\begin{defn}
Given $(H,T)\models \Sigma_{\operatorname{normal}}$, we write $H_{\fin,T}$ for the closed subspace of $H$ generated by the spaces $H_\lambda$ with 
$\lambda\in \operatorname{Isol}(\sigma(T))$ satisfying $\dim(H_\lambda)\in \mathbb N$.  In other words, if $I=\{\lambda\in \operatorname{Isol}(\sigma(T))\ : \dim(H_\lambda)\in \mathbb N\}$, then $H_{\fin,T}=\bigoplus_{\lambda \in I} H_{\lambda}$. \end{defn}

  We note the following:

\begin{cor}\label{cor:Hfin-invariant}
The space $H_{\fin,T}$ is a closed subspace which is reducing for $T$.    
\end{cor}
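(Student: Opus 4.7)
The plan is to verify the two defining conditions for a subspace to be reducing for $T$: closedness together with invariance under both $T$ and $T^*$.

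Closedness is immediate from the definition, since $H_{\fin,T}$ is defined as the closed linear span of the finite-dimensional eigenspaces $H_\lambda$ with $\lambda\in I$. So the real content is invariance under $T$ and $T^*$.

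For invariance, I would first recall the observation made in Section \ref{sec:prelim}: because $T$ is normal, every eigenspace $H_\lambda$ with $\lambda\in\operatorname{Isol}(\sigma(T))$ is reducing for $T$, since $T^*x=\bar\lambda x\in H_\lambda$ whenever $x\in H_\lambda$. In particular, for each $\lambda\in I$, the space $H_\lambda$ is invariant under both $T$ and $T^*$. It follows that the algebraic linear span $V:=\operatorname{span}\{H_\lambda:\lambda\in I\}$ is also invariant under both $T$ and $T^*$: a typical vector $\sum_{k=1}^n x_k$ with $x_k\in H_{\lambda_k}$ is sent by $T$ to $\sum_k\lambda_k x_k\in V$ and by $T^*$ to $\sum_k\bar\lambda_k x_k\in V$.

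Finally, since $T$ and $T^*$ are bounded, hence continuous, taking closures preserves invariance: if $x_n\in V$ and $x_n\to x\in H_{\fin,T}=\overline V$, then $Tx_n\to Tx$ with $Tx_n\in V$, so $Tx\in H_{\fin,T}$, and likewise for $T^*$. Thus $H_{\fin,T}$ is invariant under both $T$ and $T^*$, which is exactly the definition of being reducing for $T$. There is no real obstacle here; the corollary is essentially a bookkeeping consequence of the pointwise description of $T$ and $T^*$ on eigenspaces, together with the continuity argument for passing from the algebraic span to its closure.
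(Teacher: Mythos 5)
Your proof is correct. The paper states this corollary without proof, treating it as an immediate consequence of the preceding definition and the observation (made in Section \ref{sec:prelim}) that eigenspaces of a normal operator are reducing; your argument simply spells out the expected routine verification — each $H_\lambda$ is reducing, the algebraic span inherits invariance by linearity, and the closure inherits it by continuity of the bounded operators $T$ and $T^*$.
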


We write $T_{\fin}$ for $T\upharpoonright \! H_{\fin,T}$ and note that, since $H_{\fin,T}\subseteq \acl(\emptyset)$, we have that $(H_{\fin},T_{\fin})$ is a substructure of any model of $\operatorname{Th}(H,T)$.

We now work towards characterizing the completions of $\Sigma_{\operatorname{normal}}$.

\begin{lem}\label{lem:spectequivalent}
Suppose that $(H_1,T_2)$ and $(H_2,T_2)$ are models of $\Sigma_{\operatorname{normal}}$ with the same existential theory.  Then $T_1$ and $T_2$ are spectrally equivalent.
\end{lem}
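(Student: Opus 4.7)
The plan is to verify the two defining conditions of spectral equivalence separately, by encoding each as an existential statement after approximating complex scalars by dyadic rationals. For each dyadic rational $q\in\mathbb{Q}(i)$, so that scalar multiplication by $q$ is a term of $L$, I would consider the existential $L$-sentence
$$\varphi_q := \inf_{x}\max\bigl(1-\|x\|,\,\|Tx-qx\|\bigr).$$
I would then show that for any $(H,T)\models \Sigma_{\operatorname{normal}}$, the value $\varphi_q^{(H,T)}$ is a fixed monotone function of $d(q,\sigma(T))$ that vanishes iff $q\in\sigma(T)$. The upper bound follows from the fact that for a normal operator the spectrum equals the approximate point spectrum, which produces unit vectors on which $T$ acts almost as multiplication by any chosen point of the spectrum. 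Conversely, Lemma~\ref{continuousfunctionalcalculus}, together with the estimate $\|(T-q)y\|\geq d(q,\sigma(T))\|y\|$ for $q\notin \sigma(T)$, yields the matching lower bound. Since $(H_1,T_1)$ and $(H_2,T_2)$ have the same existential theory, $d(q,\sigma(T_1))=d(q,\sigma(T_2))$ for every dyadic $q$; density of $\mathbb{Q}(i)$ in $\mathbb{C}$ and compactness of the spectra then force $\sigma(T_1)=\sigma(T_2)$.

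For the second condition, fix $\lambda\in \operatorname{Isol}(\sigma(T_1))=\operatorname{Isol}(\sigma(T_2))$, and choose a dyadic $\eta>0$ strictly smaller than $\operatorname{dist}(\lambda,\sigma(T_1)\setminus\{\lambda\})$; the same $\eta$ works for $\sigma(T_2)$ by the previous step. Proposition~\ref{isolateddefinable} provides a quantifier-free $L$-formula $\chi_\lambda(x)$, whose parameters depend only on $\eta$ and on a dyadic approximant of $\lambda$, that defines $H_\lambda$ in both structures. For each $n$, I would form the existential $L$-sentence
$$\psi_{\lambda,n} := \inf_{x_1,\dots,x_n}\max\!\Bigl(\max_{i\leq j}\lvert\langle x_i,x_j\rangle-\delta_{ij}\rvert,\;\max_i \chi_\lambda(x_i)\Bigr),$$
whose value vanishes in $(H,T)$ exactly when $H_\lambda$ contains $n$ orthonormal vectors, i.e., when $\dim H_\lambda\geq n$. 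Agreement of the existential theories then forces $\dim H_{1,\lambda}\geq n\iff \dim H_{2,\lambda}\geq n$ for every $n$, so the two eigenspace dimensions agree as elements of $\mathbb{N}\cup\{\infty\}$.

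The main subtlety throughout is that a generic $\lambda\in\mathbb{C}$ is not a closed term of $L$, so every occurrence of $\lambda$ in the arguments above must be replaced by a sufficiently close dyadic rational and controlled via uniform-continuity estimates (for the second step this continuity is exactly the content of Lemma~\ref{lem:almostnear} underlying Proposition~\ref{isolateddefinable}). The computation relating $\varphi_q^{(H,T)}$ to $d(q,\sigma(T))$ also needs a little care because the infimum ranges over the full unit ball rather than the unit sphere, forcing the introduction of the normalization term $1-\|x\|$. Neither difficulty is conceptually deep, but together they constitute the technically fiddliest bookkeeping in the proof.
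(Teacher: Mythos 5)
Your argument is correct and follows essentially the same route as the paper: use that for normal operators the spectrum coincides with the approximate point spectrum to encode $\lambda\in\sigma(T)$ as the vanishing of the existential sentence $\inf_x\max(1-\|x\|,\|Tx-\lambda x\|)$, and use Proposition~\ref{isolateddefinable} to upgrade ``$n$ almost-orthonormal approximate $\lambda$-eigenvectors'' to ``$\dim H_\lambda\geq n$''. The one cosmetic difference is your explicit detour through dyadic scalars; the paper simply treats $\lambda x$ as a term, which is licensed by the standard convention in continuous logic of closing formulas under uniform limits (a dyadic sequence converging to $\lambda$ produces $\|Tx-\lambda x\|$ as a uniform limit of genuine $L$-formulas), so both versions are fine.
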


\begin{proof}
Recall that every element of the spectrum of a normal operator is an approximate eigenvalue.  Given $\lambda\in \mathbb C$, to say that $\lambda$ is an approximate eigenvalue of the normal operator $T$ can be captured by an existential condition, namely by $\inf_x \max(1\dminus \|x\|,\|Tx-\lambda x\|)=0$.  It follows that $\sigma(T_1)=\sigma(T_2)$.

Now suppose that $\lambda\in \operatorname{Isol}(\sigma(T_1))$.  Suppose also that $\ker(T_1-\lambda)$ has dimension at least $n$.  Then
$$(H_1,T_1)\models \inf_{x_1}\cdots\inf_{x_n} \max\left(\max_{1\leq i\leq n}\|Tx_i-\lambda x_i\|,\max_{1\leq i\leq j\leq n}|\langle x_i,x_j\rangle-\delta_{ij}|\right)=0.$$  By assumption, the same condition is satisfied in $(H_2,T_2)$, meaning that there are $n$ vectors in $H_2$ which are approximate eigenvectors corresponding to $\lambda$ that are almost orthonormal.  By Proposition \ref{isolateddefinable}, there are $n$ eigenvectors for $T_2$ with eigenvalue $\lambda$ that are almost orthonormal; this is enough to conclude that the dimension of $\ker(T_2-\lambda)$ is at least $n$.  Switching the roles of $T_1$ and $T_2$ yields the desired equality.
\end{proof}

The previous lemma and Fact \ref{fact:WNB} allow us to describe the completions of $\Sigma_{\operatorname{normal}}$, a result first observed by Henson.

\begin{thm}\label{Cor:spec-elem}
If $T_1$ and $T_2$ are normal operators on Hilbert spaces $H_1$ and $H_2$, the following are equivalent:
\begin{enumerate}
    \item $(H_1,T_1)$ and $(H_2,T_2)$ are elementarily equivalent.
    \item $(H_1,T_1)$ and $(H_2,T_2)$ have the same existential theory.
    \item $(H_1,T_1)$ and $(H_2,T_2)$ are spectrally equivalent.
\end{enumerate}
If, in addition, $H_1$ and $H_2$ are separable, these conditions are equivalent to:
\begin{enumerate}
   \item[(4)] $(H_1,T_1)$ and $(H_2,T_2)$ are approximately unitarily equivalent.
\end{enumerate}
\end{thm}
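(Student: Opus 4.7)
The plan is to prove the cycle (1) $\Rightarrow$ (2) $\Rightarrow$ (3) $\Rightarrow$ (1), and then in the separable case obtain (3) $\Leftrightarrow$ (4) directly from Fact \ref{fact:WNB}. The implication (1) $\Rightarrow$ (2) is immediate from the definition of elementary equivalence, and (2) $\Rightarrow$ (3) is precisely Lemma \ref{lem:spectequivalent}, so the bulk of the work is in proving (3) $\Rightarrow$ (1).

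For (3) $\Rightarrow$ (1), I would first reduce to the separable case. By the downward L\"owenheim--Skolem theorem for continuous logic, choose separable elementary substructures $(H_i',T_i') \preceq (H_i,T_i)$ for $i=1,2$; each is again an infinite-dimensional model of $\Sigma_{\operatorname{normal}}$, and by the already established implication (1) $\Rightarrow$ (3), each $T_i'$ is spectrally equivalent to $T_i$. Hence $T_1'$ and $T_2'$ are spectrally equivalent normal operators on separable Hilbert spaces, and it suffices to show $(H_1',T_1') \equiv (H_2',T_2')$. Applying Fact \ref{fact:WNB} in the separable case, obtain unitaries $U_n: H_1' \to H_2'$ with $\|U_n T_1' U_n^* - T_2'\| \to 0$. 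Fix a non-principal ultrafilter $\mathcal U$ on $\mathbb N$; the sequence $(U_n)$ induces a unitary $\hat U : (H_1')^{\mathcal U} \to (H_2')^{\mathcal U}$ of Hilbert space ultrapowers, and the operator-norm convergence implies that, for every class $[(x_n)] \in (H_1')^{\mathcal U}$, the vector $\hat U (T_1')^{\mathcal U}\hat U^{-1}[(U_n x_n)]$ is represented by $(U_n T_1' U_n^* U_n x_n)_n$, which agrees modulo $\mathcal U$ with $(T_2' U_n x_n)_n$. Thus $\hat U$ conjugates the ultrapower of $T_1'$ to that of $T_2'$, giving an $L$-isomorphism $(H_1',T_1')^{\mathcal U} \cong (H_2',T_2')^{\mathcal U}$, and \L o\'s's theorem yields $(H_1',T_1') \equiv (H_2',T_2')$, which then lifts to $(H_1,T_1) \equiv (H_2,T_2)$.

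Once (1)--(3) are known to be equivalent in general, the separable case of (3) $\Leftrightarrow$ (4) is exactly Fact \ref{fact:WNB}, and (4) $\Rightarrow$ (1) in the separable case is already covered by the ultrapower step above. The main obstacle is the verification that approximate unitary equivalence passes to honest isomorphism of ultrapowers; this amounts to the routine estimate $\| \hat U (T_1')^{\mathcal U} \hat U^{-1} - (T_2')^{\mathcal U}\| \leq \lim_{\mathcal U} \|U_n T_1' U_n^* - T_2'\| = 0$, but it is the only genuinely nontrivial bookkeeping step, since all the deep content is absorbed in Fact \ref{fact:WNB} and Lemma \ref{lem:spectequivalent}.
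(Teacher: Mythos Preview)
Your proof is correct and follows essentially the same route as the paper's: both note that (1) $\Rightarrow$ (2) is trivial and (2) $\Rightarrow$ (3) is Lemma \ref{lem:spectequivalent}, and both prove (3) $\Rightarrow$ (1) by passing to separable elementary substructures, invoking Fact \ref{fact:WNB} to obtain approximate unitary equivalence, and then observing that approximately unitarily equivalent operators have isomorphic ultrapowers. The paper compresses this into a single sentence, whereas you have spelled out the ultrapower bookkeeping and the use of (1) $\Rightarrow$ (3) to transfer spectral equivalence to the separable substructures.
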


\begin{proof}
    The only direction needing some justification is the implication (3) implies (1); but this follows from the previous fact by passing to separable elementary substructures and noting that approximately unitarily equivalent operators have isomorphic ultrapowers, and are thus elementarily equivalent. 
\end{proof}

It follows that the completions of $\Sigma_{\operatorname{normal}}$ are determined by two pieces of data, namely the spectrum of the operator and the dimensions of the eigenspaces corresponding to isolated points of the spectrum.

\begin{prop}
Fix a compact subset $K\subseteq \mathbb D$ and a function $m:\isol(K)\to \mathbb N\cup\{\infty\}$.
\begin{enumerate}
    \item There is an $\forall\exists$-theory $\Sigma_{K}\supseteq \Sigma_{\operatorname{normal}}$ whose models are those models $(H,T)$ of $\Sigma_{\operatorname{normal}}$ with $\sigma(T)=K$.
    \item There is an $\forall\exists$-theory $\Sigma_{K,m}\supseteq \Sigma_K$ whose models are those models $(H,T)$ of $\Sigma_K$ for which the multiplicity of the eigenvalues of isolated points is given by $m$.
\end{enumerate}
\end{prop}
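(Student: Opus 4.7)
The plan is to assemble both theories from inf-axioms forcing the presence of spectral data, together with sup-axioms forbidding spectral data outside the prescribed set.

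For (1), fix countable dense subsets $D\subseteq K$ and $D'\subseteq\mathbb{D}\setminus K$. I take $\Sigma_K$ to be $\Sigma_{\operatorname{normal}}$ together with the inf-axiom
$$\inf_x \max\bigl(1\dminus\|x\|,\ \|Tx-\lambda x\|\bigr)=0\qquad(\lambda\in D)$$
and the sup-axiom
$$\sup_x \bigl(d(\lambda,K)\|x\|\dminus\|Tx-\lambda x\|\bigr)=0\qquad(\lambda\in D').$$
The inf-axiom says that $\lambda\in D$ is an approximate eigenvalue of $T$; since for a normal operator the spectrum coincides with the closed set of approximate eigenvalues, this forces $K=\overline{D}\subseteq \sigma(T)$. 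The sup-axiom asserts $\|Tx-\lambda x\|\ge d(\lambda,K)\|x\|$ for every $x$; by the triangle inequality, this rules out any $\mu\in B(\lambda,d(\lambda,K))$ as an approximate eigenvalue, and as $\lambda$ ranges over $D'$ these open balls cover $\mathbb{D}\setminus K$, giving $\sigma(T)\subseteq K$. Since inf-axioms are existential and sup-axioms are universal, $\Sigma_K$ is $\forall\exists$.

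For (2), I start with $\Sigma_K$ and, for each $\lambda\in\operatorname{Isol}(K)$, add axioms pinning $\dim H_\lambda$ to $m(\lambda)$. The lower bound $\dim H_\lambda\ge n$ is captured, for each $n\le m(\lambda)$, by the inf-axiom
$$\inf_{x_1,\ldots,x_n}\max\Bigl(\max_i\|Tx_i-\lambda x_i\|,\ \max_{i\le j}|\langle x_i,x_j\rangle-\delta_{ij}|\Bigr)=0,$$
asserting the existence of $n$ approximately orthonormal approximate eigenvectors for $\lambda$. When $m(\lambda)=n<\infty$, I further impose the upper bound $\dim H_\lambda\le n$ via the sup-axiom
$$\sup_{x_1,\ldots,x_{n+1}}\Bigl(\delta'\dminus\max\bigl(\max_i\|Tx_i-\lambda x_i\|,\ \max_{i\le j}|\langle x_i,x_j\rangle-\delta_{ij}|\bigr)\Bigr)=0,$$
for a suitably small constant $\delta'=\delta'(n,\eta)>0$, where $\eta=d(\lambda,K\setminus\{\lambda\})$ is the isolation distance of $\lambda$ in $K$. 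The correct value of $\delta'$ is obtained by combining Lemma~\ref{lem:almostnear} (which converts the near-eigenvector condition into near-membership in $H_\lambda$, whose definability is given by Proposition~\ref{isolateddefinable}) with the standard Gram-matrix observation that any $n+1$ unit vectors in an $n$-dimensional subspace must contain a pair with inner product of magnitude at least $1/n$. The resulting sup-axiom is universal, so $\Sigma_{K,m}$ remains $\forall\exists$.

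The main step requiring care is the choice of $\delta'$ in the upper-bound axiom: one needs $\delta'$ small enough that the almost-near transfer of Lemma~\ref{lem:almostnear} combined with the Gram-matrix inequality yields a genuine obstruction to the existence of $n+1$ approximately orthonormal approximate eigenvectors. This calibration is uniform across all models since $\delta'$ depends only on $n$ and on $\eta$, both of which are fixed by $K$ and $m$. Everything else amounts to routine bookkeeping in continuous logic, and consistency of the resulting theories (while not asserted by the statement) follows from standard constructions of normal operators with prescribed spectrum and eigenvalue multiplicities.
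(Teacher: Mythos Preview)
Your argument is correct, and for part (1) it coincides with the paper's, up to your use of countable dense subsets and the harmless inclusion of the factor $\|x\|$ in the universal axiom.

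For part (2) you take a genuinely different route. The paper exploits the definability of $H_\lambda$ (Proposition~\ref{isolateddefinable}) to write a single axiom of the shape $\inf_{x_1,\ldots,x_n\in H_\lambda}\max(\ldots,\sup_{y\in H_\lambda}\ldots)=0$ asserting that $n$ orthonormal eigenvectors span $H_\lambda$; since this displayed sentence is not transparently $\forall\exists$, the paper then appeals to closure under chains to conclude that an $\forall\exists$ axiomatization exists. You instead separate the two directions: the lower bound $\dim H_\lambda\geq n$ via an explicit existential axiom, and the upper bound $\dim H_\lambda\leq n$ via an explicit universal axiom over $n+1$ variables, calibrating the constant $\delta'$ using Lemma~\ref{lem:almostnear} together with the Gershgorin-type Gram matrix bound. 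Your approach is more elementary and yields the $\forall\exists$ form directly, without the detour through the chain-closure criterion; the paper's approach is more compact but less explicit about quantifier complexity. One small point worth tightening in your write-up: Lemma~\ref{lem:almostnear} is stated for unit vectors, so when you pass from approximate unit vectors $x_i$ to genuine eigenvectors $y_i$ you should either normalize first or note that the lemma scales.
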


\begin{proof}
For (1), we add existential axioms stating that each $\lambda\in K$ has approximate eigenvectors, which is enough to conclude that $K$ is contained in the spectrum of all models.  To ensure that all models have spectrum exactly $K$, fix $\lambda\notin K$ and set $\delta:=\operatorname{dist}(\lambda,K)$.  We then add the universal axioms $\sup_x (\delta \dminus \|Tx-\lambda x\|)=0$; by Lemma \ref{continuousfunctionalcalculus}, these axioms do indeed ensure that $\sigma(T)=K$.

For (2), first suppose that $\lambda\in \operatorname{Isol}(K))$ and $m(\lambda)=n\in \mathbb N$. Then by Proposition \ref{isolateddefinable} the eigenspace $H_{\lambda}$ is definable, and can thus be quantified over. We then add the following axioms to ensure that all $m(\lambda)=n$ in all models: 

$$\inf_{x_1\in H_{\lambda}}\cdots \inf_{x_n\in H_\lambda }\max\left(\max_{1\leq i \leq j\leq n}|\langle x_i,x_j\rangle-\delta_{ij}|,\sup_{y\in H_{\lambda}} \|y-\sum_{k=1}^n \langle y,x_k\rangle x_k\|\right)=0.$$ 

If $\lambda\in \isol(K)$ and $m(\lambda)=\infty$, then add the following scheme of axioms indexed by $n\geq 1$ to ensure that $m(\lambda)=\infty$ in all models:
$$\inf_{x_1\in H_{\lambda}}\cdots \inf_{x_n\in H_\lambda }\left(\max_{1\leq i \leq j\leq n}|\langle x_i,x_j\rangle-\delta_{ij}|\right)=0.\qedhere$$ 

Perhaps it is not entirely obvious that these axioms are $\forall\exists$-axioms, but it is clear that a union of a chain of models of $\Sigma_{K,m}$ is once again a model of $\Sigma_{K,m}$, whence the theory is indeed $\forall\exists$-axiomatizable.

\end{proof}

\begin{prop}
For each compact $K\subseteq \mathbb D$ and each function $m:\isol(K)\to \mathbb N\cup\{\infty\}$, the theory $\Sigma_{K,m}$ is satisfiable.
\end{prop}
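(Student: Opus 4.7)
The plan is to build an explicit model of $\Sigma_{K,m}$ as the orthogonal direct sum of two pieces: a ``discrete'' piece encoding the isolated spectrum with the prescribed multiplicities, and a ``continuous'' piece providing the rest of $K$. For each $\lambda\in \isol(K)$, let $H_\lambda$ be a Hilbert space of dimension $m(\lambda)$ (interpreting $\infty$ as $\aleph_0$) and set $T_\lambda:=\lambda\cdot \operatorname{Id}_{H_\lambda}$; each $T_\lambda$ is normal, $\sigma(T_\lambda)=\{\lambda\}$, and $\ker(T_\lambda-\lambda)=H_\lambda$ has the required dimension.

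For the continuous piece, let $K':=K\setminus \isol(K)$. If $K'\ne \emptyset$, then $K'$ is a non-empty compact metric space, hence separable: choosing a countable dense subset $\{z_n\}\subseteq K'$, the measure $\mu:=\sum_{n}2^{-n}\delta_{z_n}$ is a regular Borel probability measure on $K'$ with $\operatorname{supp}(\mu)=K'$. Let $H':=L^2(K',\mu)$ and let $T':=M_z$ be multiplication by the coordinate function. Then $T'$ is normal with $\|T'\|\le 1$ (as $K'\subseteq \mathbb D$), its spectrum equals $\operatorname{supp}(\mu)=K'$, and $T'$ has no eigenvector for any $\lambda\in \isol(K)$, since such a $\lambda$ lies outside $K'$. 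Define $H:=H'\oplus \bigoplus_{\lambda\in \isol(K)}H_\lambda$ and $T:=T'\oplus \bigoplus_{\lambda}T_\lambda$ (dropping the first summand if $K'=\emptyset$). A direct sum of uniformly bounded normal operators is normal, so $T$ is normal; the formula $\sigma\bigl(\bigoplus_i T_i\bigr)=\overline{\bigcup_i \sigma(T_i)}$ gives $\sigma(T)=\overline{K'\cup \isol(K)}=K$; and the $\lambda$-eigenspace of $T$ decomposes along the direct sum, which for $\lambda\in \isol(K)$ reduces to exactly $H_\lambda$ by the observations above. Thus $(H,T)\models \Sigma_{K,m}$.

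The only subtle point is ensuring infinite-dimensionality of $H$, since $\Sigma_{K,m}$ tacitly extends $\Sigma_{\operatorname{normal}}$ with the scheme asserting infinitely many orthonormal vectors. If some $m(\lambda)=\infty$ or if $K'$ is infinite (equivalently, $K$ infinite, by Bolzano--Weierstrass), then $H$ is automatically infinite-dimensional and we are done. The one degenerate situation is when $K$ is finite and every $m(\lambda)\in \mathbb N$: then by Lemma \ref{lem:countable} any model of $\Sigma_{K,m}$ would be forced to be finite-dimensional, and the proposition is vacuous. Should it be desirable to have infinite-dimensional eigenspaces at non-isolated points as well (for instance, when $K'$ is non-empty but finite), one may replace $H'$ by $L^2(K',\mu)\otimes \ell^2(\mathbb N)$ with $T'=M_z\otimes \operatorname{Id}$; this remains normal with spectrum $K'$ and changes nothing in the eigenspace count at points of $\isol(K)$.
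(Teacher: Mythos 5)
Your construction is correct and close in spirit to the paper's, but the paper's proof is considerably simpler: it extends $m$ to all of $K$ by setting $m(\lambda)=\infty$ for accumulation points, then takes the single direct sum $\bigoplus_{\lambda\in K}(H_\lambda,T_\lambda)$ with $\dim H_\lambda=m(\lambda)$ and $T_\lambda=\lambda\cdot\operatorname{Id}$, indexed over \emph{all} of $K$ (so possibly uncountably many summands). The spectrum of such a sum is $\overline{\bigcup_\lambda\{\lambda\}}=K$, and the $\lambda$-eigenspace is exactly $H_\lambda$, so the isolated multiplicities come out right immediately. Your version replaces the uncountable ``continuous'' portion of that sum with $L^2(K',\mu)$ for a purely atomic $\mu$ supported on a countable dense subset of $K'$ --- which, since $\mu$ is atomic, is really just the direct sum $\bigoplus_n(\mathbb{C},z_n\cdot\operatorname{Id})$ in disguise --- so the two arguments are variants of the same idea. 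What your variant buys is separability (when $m$ takes values in $\mathbb{N}\cup\{\aleph_0\}$), at the cost of a slightly heavier setup; the paper's model is non-separable whenever $K$ is uncountable. The degenerate case you flag (finite $K$ with all $m(\lambda)\in\mathbb{N}$, forcing any model to be finite-dimensional) is a genuine mismatch with the paper's standing convention that structures are infinite-dimensional, but the paper's proof glosses over it just as yours does, and the paper later acknowledges this case in a separate remark; it is not a defect specific to your argument.
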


\begin{proof}
Extend $m$ to a function defined on all of $K$ by setting $m(\lambda)=\infty$ for all $\lambda\in K\setminus \isol(K)$.  For each $\lambda \in K$, let $H_\lambda$ be a Hilbert space of dimension $m(\lambda)$ and define $T_\lambda(x)=\lambda x$ for all $x\in H_\lambda$.  Then $(H,T):=\bigoplus (H_\lambda,T_\lambda)$ is a model of $\Sigma_{K,m}$. 
\end{proof}

It follows that the completions of $\Sigma_{\operatorname{normal}}$ are precisely those theories of the form $\Sigma_{K,m}$, or simply $\Sigma_K$ if $K$ is perfect.

Let $\mathcal X$ denote the set of completions of $\Sigma_{\operatorname{normal}}$.  We equip $\mathcal X$ with the induced topology it receives from being a subspace of the space of complete $L$-theories equipped with the logic topology, making it a compact Hausdorff space.  A description of the topology of $\mathcal X$ can be given in terms of characterizations of ultralimits: if $((H_i,T_i) \ : i\in I)$ is a family of models of $\Sigma_{\operatorname{normal}}$ and $\mathcal U$ is an ultrafilter on $I$, then $\lim_\mathcal U \operatorname{Th}(H_i,T_i)=\operatorname{Th}(\prod_\mathcal U (H_i,T_i))$.  

We now aim to give an alternate description of the topology on $\mathcal X$ in terms of spectral data.  Fix a family $\Sigma_{K_i,m_i}$ of completions of $\Sigma_{\operatorname{normal}}$ and an ultrafilter $\mathcal U$ on $I$.  Note that, for any $\lambda\in \mathbb C$, we have that the sequence $(\lim_{\mathcal{U}}(m_i(B(\lambda;1/k))))_k$ is a non-increasing sequence from $\mathbb N\cup\{\infty\}$.

\begin{thm}\label{thm:limit theories}
For a family $(\Sigma_{K_i,m_i}:i\in I)$ of completions of $\Sigma_{\operatorname{normal}}$ and an ultrafilter $\mathcal U$ on $I$ with $\lim_{\mathcal U}\Sigma_{K_i,m_i}=\Sigma_{K,m}$, we have:
\begin{enumerate}
    \item $K=\lim_{\mathcal U}K_i$, and
    \item For each $\lambda\in \operatorname{Isol}(K)$, we have $m(\lambda)=\inf_k \lim_{\mathcal{U}}(m_i(B(\lambda;1/k)))$.
\end{enumerate}
\end{thm}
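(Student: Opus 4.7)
The plan is to work with the ultraproduct $(H,T):=\prod_{\mathcal U}(H_i,T_i)$, so that $\lim_{\mathcal U}\Sigma_{K_i,m_i}=\operatorname{Th}(H,T)=\Sigma_{K,m}$; hence $K=\sigma(T)$ and $m(\lambda)=\dim\ker(T-\lambda)$ for $\lambda\in\operatorname{Isol}(K)$, so both assertions reduce to statements about the ultraproduct operator.

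For (1), I would use that ``$\lambda$ is an approximate eigenvalue'' is an existential condition, namely $\inf_x\max(1\dminus\|x\|,\|Tx-\lambda x\|)=0$. Combined with Lemma~\ref{continuousfunctionalcalculus} (which converts the value of this formula into the distance $d(\lambda,\sigma(T))$), \L{}o\'s's theorem yields $d(\lambda,K)=\lim_{\mathcal U}d(\lambda,K_i)$ for every $\lambda\in\mathbb{D}$.  A routine compactness argument (covering $K$ by finitely many balls of prescribed radius) then identifies $\{\lambda:\lim_{\mathcal U}d(\lambda,K_i)=0\}$ with the Hausdorff ultralimit of the $K_i$'s, giving $K=\lim_{\mathcal U}K_i$.

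For (2), I interpret $m_i(B(\lambda;1/k)):=\operatorname{rank}(E_i(B(\lambda;1/k)))$, where $E_i$ is the spectral measure of $T_i$; this equals $\sum_{\mu\in\operatorname{Isol}(K_i)\cap B(\lambda;1/k)} m_i(\mu)$ when the ball intersects $K_i$ only in finitely many isolated points, and equals $\infty$ otherwise.  Fix $k_0$ with $B(\lambda;2/k_0)\cap K=\{\lambda\}$, so that in $(H,T)$ we have $E(B(\lambda;1/k))H=H_\lambda$ for every $k\geq k_0$.  I would handle the two inequalities as follows.  For $m(\lambda)\leq\lim_{\mathcal U}m_i(B(\lambda;1/k))$: given $n\leq m(\lambda)$, the existential sentence asserting the existence of $n$ orthonormal eigenvectors for $\lambda$ holds in $(H,T)$, so by \L{}o\'s, for every $\delta>0$, $\mathcal U$-many $i$ contain $n$ $\delta$-orthonormal $\delta$-approximate eigenvectors for $\lambda$; Lemma~\ref{lem:almostnear} locates these within $\epsilon$ of $E_i(B(\lambda;1/k))H_i$, and for $\epsilon,\delta$ small their projections onto this subspace remain linearly independent, yielding $m_i(B(\lambda;1/k))\geq n$.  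For the reverse: if $\lim_{\mathcal U}m_i(B(\lambda;1/k))\geq n+1$ for every $k\geq k_0$, then for each such $k$, $\mathcal U$-many $H_i$ carry $n+1$ orthonormal vectors in $E_i(B(\lambda;1/k))H_i$, which by normality of the restriction satisfy $\|T_ix-\lambda x\|\leq 1/k$; transferring to $(H,T)$ and applying Lemma~\ref{lem:almostnear} with $\eta=1/k_0$ and small $\epsilon$, for $k$ large these vectors lie within $\epsilon$ of $E(B(\lambda;1/k_0))H=H_\lambda$, so their projections onto $H_\lambda$ are linearly independent, giving $m(\lambda)\geq n+1$.

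The main subtle point is that Lemma~\ref{lem:almostnear} is stated under the hypothesis $\lambda\in\sigma(T)$, which may fail in the lower-bound step above when it is applied in $(H_i,T_i)$ with $\lambda$ possibly outside $K_i$.  However, a quick inspection of the proof shows this hypothesis is not essential: all that is used is that the restriction of $T-\lambda$ to $E(B(\lambda;\eta)^c)H$ is invertible with norm at most $1/\eta$, a fact that follows from the spectral decomposition for arbitrary $\lambda\in\mathbb{C}$.  A brief remark noting this extension should accompany the invocation.
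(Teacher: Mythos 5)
Your proposal is correct and follows essentially the same route as the paper: reduce to showing the ultraproduct $\prod_{\mathcal U}(H_i,T_i)$ is a model of $\Sigma_{K,m}$, capture the spectrum via the existential ``approximate eigenvalue'' condition together with Lemma~\ref{continuousfunctionalcalculus} for part (1), and for part (2) combine Lemma~\ref{lem:almostnear} with the stability of linear independence of near-orthonormal systems under small perturbation. Your closing caveat that Lemma~\ref{lem:almostnear} does not in fact require the hypothesis $\lambda\in\sigma(T)$ is accurate and worth flagging, since the paper's own proof likewise applies the lemma to $(H_i,T_i)$ with a $\lambda\in K$ that need not lie in $\sigma(T_i)=K_i$.
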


\begin{proof}
Without loss of generality, we may assume that $\mathcal U$ is a non-principal ultrafilter.
For each $i\in I$, take models $(H_i,T_i)$ of $\Sigma_{K_i,m_i}$.  It suffices to show that $(H,T):=\prod_\mathcal U (H_i,T_i)$ is a model of $\Sigma_{K,m}$, where $K$ and $m$ are defined by the formulas (1) and (2) above.  We first show that $K=\lim_\mathcal U K_i$.  Fix $\epsilon>0$; we need to show that $d(\sigma(T),K_i)<  \epsilon$ for $\mathcal U$-almost all $i\in I$.  Fix that $\lambda\in \sigma(T)$. Since the structure $(H,T)$ is $\aleph_1$-saturated, there is $v\in H$ with $\|v\|=1$ and $Tv=\lambda v$.  Writing $v=(v_i)_\mathcal U$, we see that $\|T_iv_i-\lambda v_i\|<\epsilon/2$ for $\mathcal U$-almost all $i\in I$. We may assume that $\|v_i\|=1$ for all $i$ and thus for those indices satisfying $\|T_iv_i-\lambda v_i\|<\epsilon/2$, by Lemma \ref{continuousfunctionalcalculus}, it follows that $d(\lambda,K_i)<\epsilon/2$.  Fixing a finite $\epsilon/2$-net $\lambda_1,\ldots,\lambda_t\in \sigma(T)$, we have, for $\mathcal U$-almost all $i\in I$ and all $j=1,\ldots,n$, that $d(\lambda_j,K_i)<\epsilon/2$ and thus $\sup\{d(\lambda,K_i) \ : \ \lambda\in \sigma(T)\}<\epsilon$ for $\mathcal U$-almost all $i\in I$. Now suppose, towards a contradiction, that $A_{\epsilon}:=\{i\in I : d(\lambda_i,\sigma(T))\geq \epsilon \text{ for some }\lambda_i\in K_i\}\in \mathcal{U}$. Fix positive real numbers $(r_i:i\in I)$ such that $\lim_{\mathcal{U}}r_i=0$. For $i\in A_{\epsilon}$, take $v_i\in H_i$ with $\|v_i\|=1$ such that $\|T_iv_i-\lambda_i v_i\|<r_i$; for $i\notin A_\epsilon$ choose any $v_i\in H_i$ with $\|v_i\|=1$. Setting $\lambda:=\lim_\mathcal U \lambda_i$, we have that $Tv=\lambda v$, where $v=(v_i)_\mathcal U$.  Then $\lambda\in \sigma(T)$, contradicting the fact that $d(\lambda,\sigma(T))\geq \epsilon$.  It follows that $d(\sigma(T),K_i)< \epsilon$ for $\mathcal U$-many $i\in I$, as desired.

For (2), take $\lambda\in \operatorname{Isol}(K)$ and $\eta>0$ such that $d(\lambda,K\setminus \{\lambda\})>\eta$.  First assume that $m(\lambda)\geq N$ and take orthonormal eigenvectors $x_1,\ldots,x_N\in H$ with eigenvalue $\lambda$.  Write each $x_j=(x_j^i)_\mathcal U$, where, without loss of generality, each set $\{x^i_1,\ldots,x^i_n\}$ forms an orthonormal set in $H_i$.  Take $k\in \mathbb N$ such that $1/k<\eta/3$. For $i\in I$, consider the open set $$U_i=\{B(\rho;1/k)\ : \ \rho \in K_i\setminus B(\lambda;1/k)\}.$$  Setting $A_k:=\{i\in I: \ B(\lambda;1/k)\cap U_i=\emptyset\}$, we claim that, $A_k\in \mathcal U$.  Indeed, for $\mathcal U$-almost all $i\in I$, we have that $d(K,K_i)<\frac{1}{k}$.  Suppose $B(\lambda;1/k)\cap B(\rho;1/k)\not=\emptyset$, where $d(\rho,\lambda)\geq 1/k$.  Take $\rho'\in K$ such that $d(\rho,\rho')<1/k$; it follows that $\rho'\not=\lambda$ and $d(\lambda,\rho')<3/k<\eta$, contradicting the choice of $\eta$.
For any $i\in A_k$ and any $y^i\in H_i$, we have
$$\|y^i\|^2=\|E(B(\lambda;1/k))(y^i)\|^2+\|E(U_i)(y^i)\|^2.$$

Take $\epsilon>0$ sufficiently small such that whenever $\{x_1,\dots,x_N\}$ and $\{y_1,\dots,y_N\}$ are vectors in a Hilbert space satisfy $\langle x_i,x_j \rangle=\delta_{ij}$ and $\|x_i-y_i\|<\epsilon$, then $\dim(\{y_1,\dots,y_N\})=N$. (Note that $\epsilon$ only depends on $N$.) Fix $j\in \{1,\ldots N\}$; since  $\lim_{\mathcal{U}} T_ix^i_j=\lambda x^i_j$, Lemma \ref{lem:almostnear} implies that  $\lim_{\mathcal{U}}\|E(B(\lambda;1/k))(x^i_j)\|^2=1$. Setting $B_{\epsilon}:=\{i\in I: \|E_i(B(\lambda;1/k)(x^i_j)\|>1-\epsilon \text{ for all }j=1,\ldots,N\}$, we have $B_\epsilon\in \mathcal U$. For $j=1,\ldots,N$, set $y^i_j=E_i(B(\lambda;1/k)(x^i_j)$.
For $i\in B_\epsilon$ and $j=1,\ldots,N$, we have $d(x^i_j,y^i_j)<\epsilon$. By the way we chose $\epsilon$, it follows that $\dim(\{y^i_1,\dots,y^i_N\})=N$. As a result, we have $\lim_{\mathcal{U}}(m_i(B(\lambda;1/k)))\geq N$; since the only condition on $k$ was that $1/k<\eta/3$, we also conclude $\inf_k \lim_{\mathcal{U}}(m_i(B(\lambda;1/k)))\geq N$.

Assume now that $\inf_{k}\lim_{\mathcal{U}}(m_i(B(\lambda;1/k)))\geq N$.  Let $\epsilon$ be as in the previous paragraph.  Let $\delta=\delta(\eta,\epsilon)>0$ be as in Lemma \ref{lem:almostnear}.
 Let $k>0$ be such that $1/k<\delta$ and  $\lim_{\mathcal{U}}(m_i(B(\lambda;1/k)))\geq N$.  For $\mathcal U$-almost all $i\in I$, take orthonormal $x_1^i,\ldots,x_N^i\in E_i(B(\lambda;1/k))(H_i)$. For each of these indices we have $\|T_ix^i_j-\lambda x^i_j\|\leq 1/k<\delta$; for the other indices, choose any $x_1^i,\ldots,x_N^i\in H_i$.  Set $x_j=(x^i_j)_{\mathcal U}$. Then $\{x_1,\ldots,x_N\}$ form an orthonormal set satisfying  $\|Tx_j-\lambda x_j\|\leq 1/k<\delta$ for each $j=1,\ldots,N$. By Lemma \ref{lem:almostnear}, 
 there are $y_1,\dots,y_N\in H_{\lambda}$ such that
 $\|x_j-y_j\|<\epsilon$. By the way we chose $\epsilon$, we have
 $\dim(\{y_1,\dots,y_N\})=N$, and thus 
 $m(\lambda)\geq N$.
\end{proof}

\begin{cor}\label{cor: pseudocompact}
   For each compact $K\subseteq \mathbb D$ and each function $m:\isol(K)\to \mathbb N\cup\{\infty\}$, the theory $\Sigma_{K,m}$ is pseudocompact. 
\end{cor}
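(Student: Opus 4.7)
The plan is to invoke Theorem \ref{thm:limit theories} to realize $\Sigma_{K,m}$ as an ultralimit of theories $\Sigma_{K_n, m_n}$ with each $K_n$ a finite subset of $K$, which is the natural spectral manifestation of pseudocompactness in this setting.

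First I would build the approximating sequence. For each $n \geq 1$, set $I_n := \{\lambda \in \isol(K) : d(\lambda, K \setminus \{\lambda\}) \geq 1/n\}$; compactness of $K$ forces $I_n$ to be finite. Choose a finite set $K_n \subseteq K$ containing $I_n$ that is a $1/n$-net for $K$, and define $m_n : K_n \to \mathbb{N} \cup \{\infty\}$ by $m_n(\lambda) := m(\lambda)$ for $\lambda \in I_n$ and $m_n(\lambda) := \infty$ for $\lambda \in K_n \setminus I_n$. Since $K_n$ is finite, $\isol(K_n) = K_n$, so $m_n$ is defined at every isolated point; the $\infty$-assignments on $K_n \setminus I_n$ (together with any infinities inherited from $m$ in the degenerate case $K_n = I_n$) ensure that the approximating theories are satisfiable inside $\Sigma_{\operatorname{normal}}$, which demands infinite-dimensional models.

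Next I would fix any non-principal ultrafilter $\mathcal{U}$ on $\mathbb{N}$ and verify the two spectral criteria of Theorem \ref{thm:limit theories}. The Hausdorff condition $K = \lim_{\mathcal{U}} K_n$ is immediate from the net property. For the multiplicity criterion at $\lambda \in \isol(K)$, set $\eta := d(\lambda, K \setminus \{\lambda\}) > 0$: for every $n \geq 1/\eta$, one has $\lambda \in I_n \subseteq K_n$ with $m_n(\lambda) = m(\lambda)$; since $K_n \subseteq K$ and every other point of $K$ lies at distance at least $\eta$ from $\lambda$, for every $k$ with $1/k < \eta$ we get $B(\lambda; 1/k) \cap K_n = \{\lambda\}$ and hence $m_n(B(\lambda; 1/k)) = m(\lambda)$. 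Applying $\lim_{\mathcal{U}}$ and then $\inf_k$ recovers $m(\lambda)$, so Theorem \ref{thm:limit theories} yields $\lim_{\mathcal{U}} \Sigma_{K_n, m_n} = \Sigma_{K, m}$.

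I expect the main technical subtlety to be the multiplicity bookkeeping near isolated points with arbitrarily small separation gap: resolving each such point at a scale finer than its own gap is exactly what the stratification by the $I_n$ accomplishes, and is precisely what makes the $\inf_k \lim_{\mathcal{U}}$ in Theorem \ref{thm:limit theories} collapse back to $m(\lambda)$ rather than overshooting. A secondary concern is the satisfiability of the approximating theories inside $\Sigma_{\operatorname{normal}}$, which is handled by the $\infty$-assignment on the net-points outside $I_n$.
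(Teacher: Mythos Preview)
Your spectral verification via Theorem~\ref{thm:limit theories} is correct, but the argument does not reach pseudocompactness. Pseudocompactness requires exhibiting \emph{compact} (here: finite-dimensional) structures whose ultraproduct models $\Sigma_{K,m}$. What you have produced is an identification $\Sigma_{K,m}=\lim_{\mathcal U}\Sigma_{K_n,m_n}$, which by definition means $\Sigma_{K,m}=\operatorname{Th}\bigl(\prod_{\mathcal U}(H_n,T_n)\bigr)$ for \emph{any} choice of models $(H_n,T_n)\models\Sigma_{K_n,m_n}$; but, as you yourself observe, every such $(H_n,T_n)$ is infinite-dimensional (the $\infty$-assignments on $K_n\setminus I_n$ are there precisely to force this). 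So the displayed ultraproduct is an ultraproduct of non-compact structures, and nothing has been shown. The phrase ``natural spectral manifestation of pseudocompactness'' conflates ``finite spectrum'' with ``finite-dimensional model''; Theorem~\ref{thm:limit theories} lives entirely on the infinite-dimensional side and cannot by itself produce compact approximants.

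Your approach can be salvaged, but it needs two additional steps you have not mentioned: first, that each $\Sigma_{K_n,m_n}$ with $K_n$ finite is itself pseudocompact (truncate each multiplicity to $\min(j,m_n(\lambda))$ and take an ultraproduct over $j$), and second, that the class of pseudocompact theories is closed under ultralimits (iterated ultraproducts collapse to single ultraproducts via the Fubini product of ultrafilters). With these in place your route works. The paper instead bypasses Theorem~\ref{thm:limit theories} entirely and goes directly: for each $k$ it builds a single finite-dimensional $(H_k,T_k)$ by taking eigenspaces of dimension $\min(k,m(\lambda_n))$ at the first $k$ isolated points together with one-dimensional eigenspaces along a $1/k$-net of $K'$, and then checks that $\prod_{\mathcal U}(H_k,T_k)\models\Sigma_{K,m}$ (a computation essentially identical to the proof of Theorem~\ref{thm:limit theories}, but carried out for finite-dimensional inputs, which are not themselves models of $\Sigma_{\operatorname{normal}}$). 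This is shorter because it avoids both the detour through the intermediate theories $\Sigma_{K_n,m_n}$ and the iterated-ultraproduct bookkeeping.
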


\begin{proof}
Let $(\lambda_n)_{n\in \alpha}$ enumerate $\isol(K)$, where $\alpha\in \omega\cup\{\omega\}$, $\alpha$ being chosen depending on whether the family of isolated points is finite or countably infinite.  For each $k\geq 1$, let $(\rho_i^k)_{i=1,\ldots,\ell(k)}$ enumerate a finite $1/k$-net in $K'$.  For each $k\geq 1$ and $n\leq \min(k,\alpha)$, let $H_{n,k}$ have dimension $\min\{k,m(\lambda_n)\}$  and let $T_{n,k}(x)=\lambda_nx$ for all $x\in H_{n,k}$.  For each $i=1,\ldots,\ell(k)$, let $H'_{i,k}$ be an one-dimensional Hilbert space with $T'_{i,k}(x)=\rho_i^kx$ for all $x\in H'_{i,k}$.  Let $H_k=(\bigoplus_{m=1}^{\min k,\alpha} H_{m,k})\oplus (\bigoplus_{i=1}^{\ell(k)}H'_{i,k})$ and $T_k=(\bigoplus_{m=1}^{\min (k,\alpha)} T_{m,k})\oplus (\bigoplus_{i=1}^{m(k)}T'_{i,k})$. Let $\mathcal U$ be a nonprincipal ultrafilter on $\mathbb{N}$.  It follows that $(H,T):=\prod_{\mathcal U}(H_k,T_k)$ is a model of $\Sigma_{K,m}$.  Since each of the spaces $H_{m,k}$ and $H'_{i,k}$ is finite-dimensional, it follows that $\Sigma_{K,m}$ is pseudocompact.
\end{proof}

We now establish a ``continous functional calculus'' for the space $\mathcal X$.  We first need some preliminary lemmas. 

\begin{lem}\label{definablefunctionalcalculus}
Suppose that $f:\mathbb D\to \mathbb D$ is a continuous function.  Let $\Gamma_f$ be the $\Sigma_{\operatorname{normal}}$-functor which maps $(H,T)$ to the graph of $f(T)$.  Then $\Gamma_f$ is a $\Sigma_{\operatorname{normal}}$-definable set. 
\end{lem}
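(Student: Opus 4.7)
The plan is to combine Stone-Weierstrass with the continuous functional calculus, reducing to the case of polynomials in $T$ and $T^*$, where Proposition~\ref{graph} already provides the required definability. Since the polynomials in $z$ and $\bar z$ are uniformly dense in $C(\mathbb D)$, one can pick $p_n \in \mathbb C[z,\bar z]$ with $\sup_{z\in \mathbb D}|p_n(z,\bar z)-f(z)|\to 0$. For any $(H,T)\models \Sigma_{\operatorname{normal}}$, continuous functional calculus gives
$$\|p_n(T,T^*)-f(T)\| \;\leq\; \sup_{z\in \sigma(T)}|p_n(z,\bar z)-f(z)| \;\leq\; \sup_{z\in \mathbb D}|p_n(z,\bar z)-f(z)|,$$
so $p_n(T,T^*)\to f(T)$ in operator norm at a rate independent of the model.

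Next, I would argue that each functor $(H,T)\mapsto \Gamma(p_n(T,T^*))$ is $\Sigma_{\operatorname{normal}}$-definable. By Proposition~\ref{graph}, the graph of $T^*$ is $\Sigma_0$-definable, hence also $\Sigma_{\operatorname{normal}}$-definable. For a monomial $T^i(T^*)^j$, one describes the graph by introducing intermediate variables and constraining each consecutive pair to lie on the (definable) graph of $T$ or $T^*$; a linear combination assembles $p_n(T,T^*)$ from such monomials. Since $T$ and $T^*$ each preserve the unit ball, the only technicality is rescaling coefficients so that intermediate vectors remain in the unit ball, which is possible because the sup-norms $\|p_n\|_{C(\mathbb D)}$ are uniformly bounded.

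Finally, uniform operator-norm convergence $p_n(T,T^*)\to f(T)$ with a rate independent of the model implies uniform convergence of the graphs in Hausdorff distance, and hence uniform convergence of the associated distance predicates. Since uniform limits of definable predicates are again definable predicates, $\Gamma_f$ is a $\Sigma_{\operatorname{normal}}$-definable set.

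The main obstacle I anticipate is the bookkeeping in the middle step: carefully expressing $p_n(T,T^*)$ via compositions of definable graphs while respecting the unit-ball conventions of $L$. This is more a matter of care than of conceptual difficulty; by contrast, the Stone-Weierstrass approximation and the closure of definable predicates under uniform limits are standard.
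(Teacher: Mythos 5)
Your proof is correct and follows essentially the same route as the paper's: Stone--Weierstrass plus the $C^*$-norm identity for normal operators gives uniform (model-independent) approximation of $f(T)$ by $*$-polynomials, and Proposition~\ref{graph} makes $T^*$ available, so the result follows from closure of definable predicates under uniform limits. The paper's version is slightly more direct in the middle step — rather than first establishing that each $\Gamma(p_n(T,T^*))$ is a definable set and passing through Hausdorff convergence of graphs, it simply observes that $d(f(T)(x),y)=\lim_n\|p_n(T,T^*)(x)-y\|$ is a uniform limit of definable predicates and an almost-near formula for $\Gamma_f$, which sidesteps the intermediate-variable bookkeeping you flag as the main obstacle.
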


\begin{proof}
It suffices to note that, if $(p_n)$ is a sequence of $*$-polynomials that converge uniformly to $f$, then $d(f(T)(x),y)=\lim_n \|p_n(T,T^*)(x)-y\|$.
\end{proof}

As a nice application of the previous lemma, we have the following:

\begin{lem}\label{lem:Application-definability}
Fix compact $K\subseteq \mathbb D$ and suppose that there are nonempty closed subsets $K_1,K_2\subseteq K$ such that $K=K_1\cup K_2$ and for which there is $\epsilon>0$ such that $d(\lambda_1,\lambda_2)>\epsilon$ for all 
$\lambda_1\in K$ and all $\lambda_2\in K_2$. Then the projections $E(K_i)$ are $\Sigma_K$-definable for $i=1,2$.
\end{lem}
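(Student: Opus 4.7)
The plan is to reduce to Lemma \ref{definablefunctionalcalculus} by producing a continuous function on $\mathbb D$ whose functional calculus applied to $T$ picks out the spectral projection $E(K_i)$.

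First, I would use the hypothesis that $d(K_1,K_2)>\epsilon$ (after correcting the obvious typo so that the separation is between $K_1$ and $K_2$) to construct a continuous function $f:\mathbb D\to [0,1]\subseteq \mathbb D$ satisfying $f\! \upharpoonright K_1\equiv 1$ and $f\! \upharpoonright K_2\equiv 0$. The explicit formula $f(z):=d(z,K_2)/(d(z,K_1)+d(z,K_2))$ (with $f\equiv 1$ near $K_1$ and $f\equiv 0$ near $K_2$ and interpolated continuously outside a tubular neighborhood) works, or one may appeal to Urysohn/Tietze on $\mathbb D$. Note that $f$ depends only on the data $(K,K_1,K_2,\epsilon)$, not on any particular model.

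Second, I would apply Lemma \ref{definablefunctionalcalculus} to $f$. That lemma yields that the $\Sigma_{\operatorname{normal}}$-functor sending $(H,T)\mapsto \Gamma(f(T))$ is a $\Sigma_{\operatorname{normal}}$-definable set, hence also a $\Sigma_K$-definable set since $\Sigma_K\supseteq \Sigma_{\operatorname{normal}}$. Now in any model $(H,T)\models \Sigma_K$ we have $\sigma(T)=K=K_1\cupdot K_2$, and $f\! \upharpoonright \sigma(T)=\chi_{K_1}$. By the Spectral theorem (Fact \ref{fact: measures}(2)) and the fact that continuous functional calculus agrees with Borel functional calculus on continuous functions, we obtain
\[
f(T)=\int_{\sigma(T)}f\,dE=\int_{\sigma(T)}\chi_{K_1}\,dE=E(K_1).
\]
Hence the $\Sigma_K$-definable set $\Gamma_f$ is, in every model of $\Sigma_K$, exactly the graph of the projection $E(K_1)$. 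Applying the same argument with $1-f$ in place of $f$ (or using $E(K_2)=\operatorname{Id}_H-E(K_1)$) yields the result for $E(K_2)$.

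I do not anticipate any serious obstacle: the separation hypothesis $d(K_1,K_2)>\epsilon$ is precisely what is needed to guarantee that a continuous function on $\mathbb D$ can coincide with $\chi_{K_1}$ on $K$, and the rest is a direct invocation of Lemma \ref{definablefunctionalcalculus} together with the spectral theorem. The only mild point to check is that the formula approximating $\Gamma(f(T))$ given by Lemma \ref{definablefunctionalcalculus} remains a valid almost-near definition within the smaller class of models of $\Sigma_K$, which is automatic since definability is preserved upon passing to a theory extension.
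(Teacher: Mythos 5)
Your proof is correct and is essentially identical to the paper's: construct a continuous function on $\mathbb D$ equal to $1$ on $K_1$ and $0$ on $K_2$, invoke Lemma \ref{definablefunctionalcalculus}, and observe that in models of $\Sigma_K$ this functional calculus yields $E(K_1)$. You are also right that the statement contains a typo (the separation should be between $\lambda_1\in K_1$ and $\lambda_2\in K_2$), and the paper's proof silently uses the corrected hypothesis.
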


\begin{proof}
Since $K_1$ and $K_2$ are uniformly separated by $\epsilon$, there is a continuous function $f:\mathbb D\to \mathbb D$ satisfying $f(\lambda)=1$ for all $\lambda\in K_1$ and $f(\lambda)=0$ for all $\lambda\in K_2$.  By Lemma \ref{definablefunctionalcalculus}, $f(T)$ is $\Sigma_{\operatorname{normal}}$-definable.  But in models of $\Sigma_K$, $f(T)=E(K_1)$.  An analogous argument shows that $E(K_2)$ is $\Sigma_K$-definable.
\end{proof}

\begin{rem}
When $K_1$ and $K_2$ and $\epsilon>0$ are as above, we call them $\epsilon$-separated.
The previous lemma gives another proof of Proposition \ref{isolateddefinable}: any point $\lambda\in \text{Isol}(K)$ is $\epsilon$-separated from $K\setminus\{\lambda\}$ for some $\epsilon>0$, whence $E(\{\lambda\})=P_{H_\lambda}$ is definable.
The previous lemma also gives a more conceptual proof of part 2 of Theorem \ref{thm:limit theories}. Indeed, if $\lambda\in \text{Isol}(K)$ is $\epsilon$-separated from $K\setminus\{\lambda\}$ and $\lim_{\mathcal U}K_i=K$, we have that $B(\lambda;\epsilon/3)$ is 
$\epsilon/3$-separated from $K_i\setminus B(\lambda;\epsilon/3)$ for $\mathcal{U}$-almost all $i\in I$. Thus the projection $E(B(\lambda;\epsilon/3))$ can approximated by the same sequence of polynomials $(p_n(T,T^*))$ for $\mathcal{U}$-almost all $i\in I$, that is, it is uniformly definable in the family of theories $\Sigma_{K,m}$, $(\Sigma_{K_i,m_i}: i\in I_0)$ for some $I_0\in \mathcal{U}$. If $$(H,T)\models \dim(E(B(\lambda;\epsilon/3)(H))=m(\lambda),$$ then $$(H_i,T_i)\models \dim(E(B(\lambda;\epsilon/3)(H_i))=m(\lambda)$$ for $\mathcal{U}$-almost all $i\in I$.
\end{rem}

We now show that the continuous functional calculus preserves elementary equivalence.

\begin{lem}\label{continuouspreservesee}
Suppose that $f:\mathbb D\to \mathbb D$ is a continuous function and $(H_1,T_1)\equiv (H_2,T_2)$.  Then $(H_1,f(T_1))\equiv (H_2,f(T_2))$.
\end{lem}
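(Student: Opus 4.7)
The plan is to invoke Lemma \ref{definablefunctionalcalculus}, which asserts that the graph $\Gamma_f$ of $f(T)$ is a $\Sigma_{\operatorname{normal}}$-definable set.  Since $\Gamma_f$ is the graph of a total function, this is precisely the statement that $f(T)$ is itself a definable function in any model $(H,T)\models \Sigma_{\operatorname{normal}}$.  Once this is granted, the expansion $(H, f(T))$ is interpretable inside $(H,T)$ in a uniform way, and so elementary equivalence of $T_1$ and $T_2$ transfers automatically to $f(T_1)$ and $f(T_2)$.

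Concretely, I would prove by induction on complexity that for every $L$-sentence $\varphi$ (read with the function symbol $T$ interpreted as $f(T)$) there is an $L$-sentence $\varphi^f$ such that, for every model $(H,S)$ of $\Sigma_{\operatorname{normal}}$,
\[ (H, f(S)) \models \varphi \iff (H, S) \models \varphi^f. \]
The only nontrivial step is the replacement of each subterm ``$S(t)$'' (interpreted as $f(S)(t)$) by a definable selector for $\Gamma_f$; this is a standard continuous-logic maneuver using the definability of $\Gamma_f$ together with the functionality of its projection onto the second coordinate.  Applying this translation and the hypothesis $(H_1,T_1)\equiv (H_2,T_2)$, one runs the chain
\[ (H_1,f(T_1))\models\varphi \;\iff\; (H_1,T_1)\models\varphi^f \;\iff\; (H_2,T_2)\models\varphi^f \;\iff\; (H_2,f(T_2))\models\varphi, \]
yielding the desired elementary equivalence.

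I do not anticipate any real obstacle; the ``main step'' is simply bookkeeping for the inductive translation.  As an alternative I would keep in my back pocket a semantic argument: by Keisler--Shelah, fix an isomorphism $\Phi:(H_1,T_1)^{\mathcal U}\to(H_2,T_2)^{\mathcal U}$ for a suitable ultrafilter $\mathcal U$.  Then $\Phi$ is unitary and intertwines $T_1^{\mathcal U}$ with $T_2^{\mathcal U}$; by Proposition \ref{graph}, $\Phi$ also intertwines the adjoints, and hence every $*$-polynomial in $T_1^{\mathcal U}$ with the corresponding one in $T_2^{\mathcal U}$.  Since $f$ is a uniform limit of $*$-polynomials on the closed unit disk, $\Phi$ intertwines $f(T_1^{\mathcal U})=f(T_1)^{\mathcal U}$ with $f(T_2^{\mathcal U})=f(T_2)^{\mathcal U}$, so the ultrapowers of $(H_1,f(T_1))$ and $(H_2,f(T_2))$ are isomorphic, giving the elementary equivalence.
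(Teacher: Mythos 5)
Your primary approach is the paper's: the paper notes the lemma follows immediately from Lemma \ref{definablefunctionalcalculus}, i.e.\ from definability of $\Gamma_f$, and your translation $\varphi\mapsto\varphi^f$ is the standard way to unpack that observation.

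Your backup semantic argument takes a genuinely different route from the paper's alternative proof. The paper uses Theorem \ref{Cor:spec-elem} to extract a sequence of unitaries $U_n$ with $\|T_2-U_nT_1U_n^*\|\to 0$, then shows $\|f(T_2)-U_nf(T_1)U_n^*\|\to 0$, concluding that $f(T_1)$ and $f(T_2)$ are again approximately unitarily equivalent and hence elementarily equivalent. You instead invoke Keisler--Shelah to produce a single unitary isomorphism of ultrapowers and push it through $*$-polynomial approximation. Both are correct; what yours buys is a cleaner justification of the norm-continuity of the functional calculus, since the paper's displayed estimate $\|f(T_2)-f(U_nT_1U_n^*)\|\leq\|f\|_\infty\|T_2-U_nT_1U_n^*\|$ is not literally a Lipschitz bound for an arbitrary continuous $f$ and should really be read as ``tends to zero'', which is precisely what your Stone--Weierstrass step supplies. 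What the paper's version buys is concreteness: no appeal to Keisler--Shelah, and it directly reuses the approximate-unitary-equivalence machinery already in hand. One line worth adding to your sketch: the identity $f(T_i^{\mathcal U})=f(T_i)^{\mathcal U}$ is used implicitly and deserves a sentence, via uniform approximation of $f$ by $*$-polynomials together with the fact that operator norm passes unchanged to ultrapowers.
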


\begin{proof}
This follows from immediately Lemma \ref{definablefunctionalcalculus}.  We also provide an alternative proof.  Since $T_1$ and $T_2$ are spectrally equivalent, we may take a sequence $(U_n)$ of unitary maps such that $\|T_2-U_nT_1U_n^*\|\to 0$. Then $$\|f(T_2)-U_nf(T_1)U_n^*\|=\|f(T_2)-f(U_nT_1U_n^*)\|\leq \|f\|_\infty \|T_2-U_nT_1U_n^*\| \to 0,$$ showing that $f(T_1)$ and $f(T_2)$ are approximately unitarily equivalent.
\end{proof}

\begin{rem}
Lemma \ref{continuouspreservesee} cannot be generalized to the case that $f$ is merely a bounded Borel function.  Indeed, if $f=\chi_{\{\lambda\}}$ for some $\lambda\in \sigma(T_1)=\sigma(T_2)$, then $f(T_i)=E_i(\{\lambda\})$ for $i=1,2$; since it can be the case that $E_1=0$ while $E_2\not=0$, we cannot conclude that $(H_1,f(T_1))\equiv (H_2,f(T_2))$.
\end{rem}

\begin{rem}
While we assume that our continuous function $f$ takes values in $\mathbb D$ (so that the operators $f(T)$ also have operator norm at most $1$, and thus follow our current convention), the truth of the previous lemma does not rely on this assumption.  Indeed, suppose that $f:\mathbb D\to \mathbb C$ is a continuous function (not necessarily mapping into $\mathbb D$) and $(H_1,T_1)\equiv (H_2,T_2)$.  Then the conclusion $(H_1,f(T_1))\equiv (H_2,f(T_2))$ still holds.
To see this, note that, since the map $f$ is continuous on a compact set, the image belongs to $B(0;n)\subseteq \mathbb{C}$ for some $n$ and the result follows as in the proof of Lemma \ref{continuouspreservesee}, but now $\|f\|_{\infty}\leq n$
and the resulting normal operator now has norm bounded by $n$.
\end{rem}

By Lemma \ref{continuouspreservesee}, a continuous function $f:\mathbb D\to \mathbb D$ induces a function $\tilde{f}$ on the space $\mathcal X$ of completions of $\Sigma_{\operatorname{normal}}$.

\begin{prop}
The function $\tilde{f}:\mathcal X\to \mathcal X$ is continuous.
\end{prop}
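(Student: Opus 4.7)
My plan is to verify continuity of $\tilde{f}$ by showing it commutes with ultralimits, using the description of the topology on $\mathcal X$ given in the paper just before Theorem \ref{thm:limit theories}. That is, given any family $(t_i = \operatorname{Th}(H_i, T_i) : i \in I)$ in $\mathcal X$ and any ultrafilter $\mathcal U$ on $I$, I want to show that
$$\tilde{f}\bigl(\lim_{\mathcal U} t_i\bigr) \;=\; \lim_{\mathcal U} \tilde{f}(t_i).$$
Since $\mathcal X$ is a compact Hausdorff space whose topology is determined by ultralimits, establishing this equality suffices.

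Setting $(H, T) := \prod_{\mathcal U}(H_i, T_i)$, the left-hand side equals $\operatorname{Th}(H, f(T))$, while the right-hand side equals $\operatorname{Th}\bigl(\prod_{\mathcal U}(H_i, f(T_i))\bigr)$. Note that each $f(T_i)$ is a bounded normal operator with norm at most $\|f\|_\infty \leq 1$, so the ultraproduct $\prod_{\mathcal U}(H_i, f(T_i))$ is a legitimate $L$-structure; its underlying Hilbert space is $H$ and its distinguished operator is the bounded operator $S := (f(T_i))_{\mathcal U}$ on $H$. The heart of the argument is then to show that $S = f(T)$ as operators on $H$, from which the desired equality of theories is immediate.

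To establish $S = f(T)$, I will approximate $f$ uniformly by $*$-polynomials: pick $*$-polynomials $p_n$ with $\|p_n - f\|_\infty \to 0$ on $\mathbb D$, exactly as in the proof of Lemma \ref{definablefunctionalcalculus}. Because the graph of $T^*$ is definable over $\Sigma_0$ (Proposition \ref{graph}), the assignment $(H,T) \mapsto (H,T,T^*)$ commutes with ultraproducts, and therefore so does any $*$-polynomial expression; explicitly, $p_n(T, T^*)$ as computed in $H$ agrees with $(p_n(T_i, T_i^*))_{\mathcal U}$. By continuous functional calculus one has $\|f(T_i) - p_n(T_i, T_i^*)\| \leq \|f - p_n\|_\infty$ for each $i$ and also $\|f(T) - p_n(T, T^*)\| \leq \|f - p_n\|_\infty$. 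Passing to the ultraproduct, for any $x = (x_i)_{\mathcal U} \in H$,
$$\|Sx - p_n(T,T^*)x\| \;=\; \lim_{\mathcal U}\|(f(T_i) - p_n(T_i,T_i^*))x_i\| \;\leq\; \|f - p_n\|_\infty \cdot \|x\|,$$
and combining with the corresponding estimate for $f(T)$ gives $\|Sx - f(T)x\| \leq 2\|f - p_n\|_\infty \|x\| \to 0$, whence $S = f(T)$.

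The main obstacle, if any, is the bookkeeping involved in the claim that $*$-polynomial operations commute with ultraproducts, but this is essentially immediate once one knows that $T^*$ is interpreted in the ultraproduct as $(T_i^*)_{\mathcal U}$, which in turn follows from the definability of its graph. I note that an alternative proof route is to invoke the definability of $\Gamma_f$ from Lemma \ref{definablefunctionalcalculus} directly: any $L$-formula $\varphi$ in the expansion by a symbol for $f(T)$ can, using the definition of $\Gamma_f$, be uniformly rewritten as an $L$-formula $\widetilde{\varphi}$ not involving $f(T)$, giving $\varphi^{(H, f(T))} = \widetilde{\varphi}^{(H,T)}$ uniformly in $(H,T)$, and this uniform interpretation is exactly what continuity of $\tilde f$ on the space of complete theories amounts to.
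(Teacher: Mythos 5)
Your proof is correct and takes essentially the same approach as the paper: the paper also reduces the claim to showing that $\prod_{\mathcal U}(H_i,f(T_i))=(H,f(T))$ and then cites Lemma \ref{definablefunctionalcalculus}, which is exactly what your $*$-polynomial approximation argument unpacks. The only difference is that the paper leaves the verification implicit (``follows immediately from Lemma \ref{definablefunctionalcalculus}'') while you spell out the estimate showing $S=f(T)$.
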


\begin{proof}
Let $((H_i,T_i):i\in I)$ be a collection of models of $\Sigma_{\operatorname{normal}}$, $\mathcal U$ an ultrafilter on $I$, and $(H,T):=\prod_{\mathcal U}(H_i,T_i)$; it suffices to show that $\prod_{\mathcal U}(H_i,f(T_i))=(H,f(T))$.  However, this follows immediately from Lemma \ref{definablefunctionalcalculus}.
\end{proof}

We end this section with one further general observation about the theories under discussion.

\begin{prop}
  The theories $\Sigma_{\operatorname{normal}}$, $\Sigma_K$, and $\Sigma_{K,m}$ have the joint embedding property and the amalgamation property.
\end{prop}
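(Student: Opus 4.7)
The plan is to give explicit constructions via orthogonal direct sums, exploiting the fact that embeddings of normal operators have reducing image. The key preliminary observation is that, by Fact \ref{normalreducing} together with the remark following Proposition \ref{graph}, any $L$-embedding $\iota\colon (H_0,T_0)\hookrightarrow (H,T)$ between models of $\Sigma_{\operatorname{normal}}$ has reducing image, so (identifying $H_0$ with $\iota(H_0)$) we obtain a $T$-reducing orthogonal decomposition $H=H_0\oplus K$ with $T=T_0\oplus S$, where $S:=T\upharpoonright K$ is normal and $\sigma(S)\subseteq\sigma(T)$.

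For the amalgamation property, given embeddings $(H_0,T_0)\hookrightarrow (H_i,T_i)$ for $i=1,2$, write $H_i=H_0\oplus K_i$ and $T_i=T_0\oplus S_i$ as above, and set $(H,T):=(H_0\oplus K_1\oplus K_2,\,T_0\oplus S_1\oplus S_2)$. Then $T$ is normal, and the obvious inclusions $H_i\hookrightarrow H$ agree on $H_0$. For $\Sigma_K$, since $\sigma(T)=\sigma(T_0)\cup\sigma(S_1)\cup\sigma(S_2)$ with each piece inside $K$ and $\sigma(T_0)=K$, the spectrum is exactly $K$. For $\Sigma_{K,m}$ I would check multiplicities at isolated $\lambda\in K$: if $m(\lambda)=n<\infty$, then $(H_0)_\lambda$ and $(H_i)_\lambda$ both have dimension $n$, which forces $(K_i)_\lambda=\{0\}$, so the eigenspace of $T$ at $\lambda$ is exactly $(H_0)_\lambda$ of dimension $n$; if $m(\lambda)=\infty$, the eigenspace of $T$ at $\lambda$ already contains the infinite-dimensional $(H_0)_\lambda$.

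For the joint embedding property in $\Sigma_{\operatorname{normal}}$ and $\Sigma_K$, the direct sum $(H_1\oplus H_2,T_1\oplus T_2)$ works (spectra take unions; infinite-dimensionality is preserved). This naive approach fails for $\Sigma_{K,m}$, since it would double the dimensions of finite-multiplicity eigenspaces at isolated points. The fix is to amalgamate over the canonical algebraic part $H_{\fin}$: fix a reference model $(H_{\fin},T_{\fin})$ whose spectral data matches the restriction of $m$ to the finite-multiplicity isolated points (this is well-defined up to unitary equivalence), and choose isomorphisms $j_i\colon (H_{\fin},T_{\fin})\cong (H_{i,\fin},T_i\upharpoonright H_{i,\fin})$, which exist by Corollary \ref{cor:Hfin-invariant} and spectral equivalence. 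Then apply the AP construction with $(H_{\fin},T_{\fin})$ in the role of $(H_0,T_0)$ and the embeddings $j_i^{-1}$ on the finite part plus the identity on $H_{i,\fin}^\perp$.

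The main obstacle is the bookkeeping of eigenspace multiplicities for $\Sigma_{K,m}$. The AP case works because the hypothesis forces any finite-multiplicity eigenspace at an isolated point to be absorbed entirely into the common substructure $H_0$, so the reducing complements contribute nothing there; the JEP case requires the extra step of identifying a common reference substructure $(H_{\fin},T_{\fin})$ before amalgamating, rather than taking a plain direct sum.
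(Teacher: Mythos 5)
Your proposal is correct and follows essentially the same route as the paper: the amalgam is the orthogonal direct sum $(H_0,T_0)\oplus[(H_1,T_1)\ominus(H_0,T_0)]\oplus[(H_2,T_2)\ominus(H_0,T_0)]$, using that embeddings between models of $\Sigma_{\operatorname{normal}}$ have reducing image, and JEP for $\Sigma_{K,m}$ is obtained by amalgamating over a fixed copy of $(H_{\fin},T_{\fin})$. You spell out a couple of points the paper leaves implicit (the explicit isomorphism identifying the two finite parts, and the dimension-count showing $(K_i)_\lambda=\{0\}$ at finite-multiplicity isolated $\lambda$), but the construction and the verification are the same.
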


\begin{proof}
   We only treat the case of the theory $\Sigma_{K,m}$, the other cases being simpler.  We first establish the joint embedding property. Given models $(H_1,T_1),(H_2,T_2)$ of $ \Sigma_{K,m}$, we can write 
   $$(H_1,T_1)=(H_{\fin},T_{\fin}) \oplus [(H_1,T_1)\ominus (H_{\fin},T_{\fin})]$$ and $$(H_2,T_2)=(H_{\fin},T_{\fin}) \oplus [(H_2,T_2)\ominus (H_{\fin},T_{\fin})].$$ Set $(H_3,T_3)=(H_{\fin},T_{\fin})\oplus
   [(H_1,T_1)\ominus (H_{\fin},T_{\fin})] \oplus [((H_2,T_2)\ominus (H_{\fin},T_{\fin})]$. 
   Clearly $\sigma(T_3)=K$. Since the eigenspaces associated to isolated eigenvalues of finite multiplicity are contained in $H_{\fin}$,  we have $(H_3,T_3)\models \Sigma_{K,m}$.  It is clear that $(H_i,T_i)$ embeds into $(H_3,T_3)$ for $i=1,2$.

Given three models $(H_0,T_0),(H_1,T_1),(H_2,T_2)$ of $\Sigma_{K,m}$ with $(H_0,T_0)\subseteq (H_i,T_i)$ for $i=1,2$, we can amalgamate the structures 
$(H_1,T_1)$ and $(H_2,T_2)$ over $(H_0,T_0)$ by repeating the construction above over $(H_0,T_0)$. In this case, we can define
   $(H_3,T_3)=(H_0,T_0)\oplus
   [(H_1,T_1)\ominus (H_0,T_0)] \oplus [((H_2,T_2)\ominus (H_0,T_0)]$. 
   As in the case of the joint embedding property, we have $(H_3,T_3)\models \Sigma_{K,m}$ and is an amalgalm of $(H_1,T_2)$ and $(H_2,T_2)$ over $(H_0,T_0)$.
\end{proof}

\begin{rem}
If the reader is familiar with the characterization of forking from Section \ref{sec:stability}, the construction in the previous argument witnessing the amalgamation property is the free amalgam, in the sense of forking independence, of the structures $(H_1,T_1)$ and $(H_2,T_2)$ over $(H_0,T_0)$.
\end{rem}

\section{Quantifier elimination}\label{sec:QE}

In this section, we show that, after adding a symbol to name the adjoint, each theory $\Sigma_K$ admits quantifier-elimination.

Let $L^*$ be the language obtained by adding to $L$ a new unary function symbol $T^*$, which also has modulus of uniform continuity the identity function.  Let $\Sigma_0^*$ be the $L^*$-theory extending $\Sigma_0$ which states that $T^*$ is a linear operator with $\|T^*\|\leq 1$ that is the adjoint of $T$:  this can be expressed by the single universal axiom $\sup_{x,y}|\langle Tx,y\rangle-\langle x,T^*y\rangle|=0$.  Note now that if $(H_1,T_1,T_1^*),(H_2,T_2,T_2^*)$ are models of $\Sigma_0^*$ with $(H_1,T_1,T_1^*)\subseteq (H_2,T_2,T_2^*)$, then $H_1$ is reducing for $T_2$.

We let $\Sigma_{\operatorname{normal}}^*$, $\Sigma_K^*$, and $\Sigma_{K,m}^*$ denote the obvious extensions of the corresponding $L$-theories.

\begin{thm}\label{QE}
For any nonempty compact $K\subseteq \mathbb D$, the theory $\Sigma_{K}^*$ has quantifier-elimination.
\end{thm}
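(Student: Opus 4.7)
The plan is to prove quantifier-elimination via the standard back-and-forth criterion in continuous logic: it suffices to show that, for any $\aleph_1$-saturated models $(H_1, T_1, T_1^*)$ and $(H_2, T_2, T_2^*)$ of $\Sigma_K^*$ sharing a common substructure $A$, every $v \in H_1$ has the same quantifier-free type over $A$ as some $w \in H_2$. The first step is the observation that a substructure of a model of $\Sigma_K^*$ is exactly a closed subspace reducing both $T$ and $T^*$; by Fact \ref{normalreducing}, $T \upharpoonright A$ is again normal, and $H_i \ominus A$ is likewise reducing, so the spectral theorem applies to each of the restrictions $T_i \upharpoonright (H_i \ominus A)$.

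The heart of the argument is a spectral characterization of $\qftp(v/A)$. Writing $v = P_A(v) + v'$ with $v' \in H_1 \ominus A$, every quantifier-free formula with parameters from $A$ reduces, via the adjoint identity $\langle Tx, y\rangle = \langle x, T^*y\rangle$ and the orthogonal splitting $H_1 = A \oplus (H_1 \ominus A)$, to expressions involving $P_A(v)$ together with quantities of the form $\|p(T, T^*) v'\|^2 = \int_K |p(z,\bar z)|^2 \, d\mu_{v'}(z)$, where $p$ ranges over $*$-polynomials and $\mu_{v'} \in M(K)_+$ is the scalar spectral measure of $v'$ for $T_1 \upharpoonright (H_1 \ominus A)$. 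By Stone--Weierstrass, this data is equivalent to the measure $\mu_{v'}$ itself, so $\qftp(v/A)$ is captured by the pair $(P_A(v), \mu_{v'})$.

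To realize this type in $H_2$, I would take $w := P_A(v) + w'$, identifying $P_A(v)$ with its image under the embedding of $A$ into $H_2$, and produce $w' \in H_2 \ominus A$ with scalar spectral measure $\mu_{v'}$. Using the $\aleph_1$-saturation of $H_2$ together with the spectral theorem for $T_2 \upharpoonright (H_2 \ominus A)$, the existence of such a $w'$ reduces to showing that $\mu_{v'}$ is supported on $\sigma(T_2 \upharpoonright (H_2 \ominus A))$. For the non-isolated part of $K$, Lemma \ref{continuousfunctionalcalculus} implies that every such point is an approximate eigenvalue of $T_2$, and a saturation argument produces the required approximate eigenvectors.

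The main obstacle is the contribution of isolated points $\lambda \in K$ to $\mu_{v'}$, since the eigenspace $(H_2)_\lambda$ could be exhausted by $A \cap (H_2)_\lambda$. Here one uses the definability of $H_\lambda$ at isolated $\lambda$ (Proposition \ref{isolateddefinable}), which is uniform across models of $\Sigma_K^*$: the quantity $\|P_{H_\lambda}(v)\|^2$ is encoded in the quantifier-free type of $v$, and similarly for elements of $A$. This forces the point masses $\mu_{v'}(\{\lambda\})$ on the two sides to agree, and ensures that $(H_2)_\lambda \ominus (A \cap (H_2)_\lambda)$ has enough room to realize the required point mass. The back-and-forth then proceeds one vector at a time, yielding quantifier-elimination for $\Sigma_K^*$.
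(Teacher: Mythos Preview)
Your overall strategy coincides with the paper's: both establish QE by an embedding-extension/back-and-forth criterion, identify $\qftp(v/A)$ with the pair $(P_A(v),\mu_{v'})$ via the spectral theorem, and then realize $\mu_{v'}$ as the scalar spectral measure of some $w'\in H_2\ominus A$. The paper carries out this last step by an explicit grid-partition construction---choosing a unit vector in each $E_2(I_k)(H_2)\cap f(H_0)^\perp$ of infinite rank and weighting it by $\mu_{v'}(I_k)$---which is precisely the finite-satisfiability argument hiding behind your appeal to saturation.

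The gap is in your final paragraph. Quantifier-free definability of $H_\lambda$ (Proposition~\ref{isolateddefinable}) tells you that $\mu_{v'}(\{\lambda\})$ is determined by qf data about $v$ and $A$, but it gives no control over $\dim(H_2)_\lambda$: that is existential information about the model $H_2$, not qf information about its elements. The inference that $(H_2)_\lambda\ominus(A\cap(H_2)_\lambda)$ ``has enough room'' is therefore unjustified by definability alone---nothing you wrote rules out the scenario $A_\lambda=(H_2)_\lambda$ while $A_\lambda\subsetneq(H_1)_\lambda$, in which case no $w'\in H_2\ominus A$ can carry the required point mass at $\lambda$. The paper handles this by a different device that your sketch is missing: \emph{before} extending by a single vector, it first enlarges the common substructure so that $H_{T_1,\fin}\subseteq H_0$. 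This forces the chosen $x\in H_1\ominus H_0$ to be orthogonal to every finite-dimensional isolated eigenspace of $T_1$, so that $\mu_x(I_k)=0$ whenever $E_2(I_k)$ has finite rank; those grid squares can then simply be skipped in the construction of $y$.
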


\begin{proof}
Fix models $(H_1,T_1,T_1^*),(H_2,T_2,T_2^*)\models \Sigma_{K}^*$ with $(H_2,T_2,T_2^*)$ $\kappa^+$-saturated, where $\kappa$ is the density character of $H_1$.  Further suppose that $(H_0,T_0,T_0^*)$ is a substructure of $(H_1,T_1,T_1^*)$ and $f:(H_0,T_0,T_0^*)\to (H_2,T_2,T_2^*)$ is an embedding.  It suffices to show that $f$ can be extended to a substructure of $(H_1,T_1,T_1^*)$ whose domain properly contains $H_0$.  For this purpose, we may assume, without loss of generality, that $H_{T_1,\operatorname{fin}}\subseteq H_0$.  It suffices then to show that, given $x\in H_1\ominus H_0$, we may find an extension of $f$ to an embedding from a substructure of $(H_1,T_1,T_1^*)$ containing $x$ in its domain to $(H_2,T_2,T_2^*)$. Since $H_0$ is reducing for $T_0^*$, we have that $p(T,T^*)(x)\in H_1\ominus H_0$ for all $p\in \mathbb C[X,Y]$.  Consequently, it suffices to find $y\in H_2\ominus f(H_0)$ such that $\langle p(T_1,T_1^*)(x),x\rangle=\langle p(T_2,T_2^*)(y),y\rangle$ for all $p\in \mathbb C[X,Y]$. 
Fix finitely such polynomials $p_1,\ldots,p_m$ and $\epsilon>0$; setting $\alpha_i:=\langle p_i(T_1,T_1^*)(x),x\rangle$, by saturation, it suffices to find $y\in H_2\ominus f(H_0)$ such that $|\langle p_i(T_2,T_2^*)(y),y\rangle-\alpha_i|<\epsilon$ for all $i=1,\ldots,m$.

Set $\mu$ to be the scalar measure on the spectrum corresponding to $x$, that is, $\mu(I)=\langle E_1(I)x,x\rangle$ for every Borel set $I\subseteq K$; here $E_1$ is the spectral measure associated to $T_1$.    Choose a sufficiently fine grid partition of the plane, by which we mean a partition of the plane into small squares, where, say, the left and bottom edge of each square is included in the set, while the top and right edge are not included.  Let $I_1,\ldots,I_t$ be the elements of the partition that intersect $K$.  For each $k=1,\ldots,t$, we define an element $y_k\in E_2(I_k)$ (where $E_2$ is the spectral projection associated to $T_2$) as follows.  If $E_2(I_k\cap K)$ has finite rank, then set $y_k:=0$.  If $E_2(I_k\cap K)$ has infinite rank, then, by saturation, its dimension is greater than the dimension of $f(H_0)$, whence we may find a unit vector $y_k\in E_2(I_k)\cap (H_2\ominus f(H_0))$.  Set $y:=\sum_{k=1}^t \mu(I_k)y_k$, which is an element of $H_2\ominus f(H_0)$.  If $E_2(I_k)$ has finite rank, then $E_1(I_k)\subseteq H_{T_1,\operatorname{fin}}$; since $x\in H_1\ominus H_{T_1,\operatorname{fin}}$, we have $\langle E_2(I_k)y,y\rangle=0=\langle E_1(I_k)x,x\rangle$.  If $E_2(I_k)$ has infinite rank, we have
$$\langle E_2(I_k)y,y\rangle=\langle \mu(I_k)y_k,y_k\rangle=\mu(I_k)=\langle E_1(I_k)x,x\rangle.$$ For each $k=1,\ldots,t$, let $\lambda_k$ be an element of $I_k\cap K$.  We then have 
$$\langle \sum_{k=1}^t p_i(\lambda_k)E_2(I_k)y,y\rangle=\langle \sum_{k=1}^t p_i(\lambda_k)E_1(I_k)x,x\rangle.$$
On the other hand, if the grid is sufficiently fine so that, for all $i=1,\ldots,m$, $|p_i(a)-p_i(b)|<\epsilon$ whenever $a$ and $b$ belong to the same $I_k$, then by Fact \ref{fact: measures}, we have $\|p_i(T_j,T_j^*)-\sum_{k=1}^t p_i(\lambda_k)E_j(I_k)\|<\epsilon/2$ for $j=1,2$.  Consequently, this $y$ is as desired.
\end{proof}

\begin{cor}\label{modelcomplete}
Every theory $\Sigma_K$ is model-complete.
\end{cor}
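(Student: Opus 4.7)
The plan is to deduce model-completeness of $\Sigma_K$ directly from the quantifier elimination of $\Sigma_K^*$ (Theorem \ref{QE}), by showing that any $L$-embedding between models of $\Sigma_K$ automatically lifts to an $L^*$-embedding of the corresponding enrichments by the adjoint. Once the lift is in place, the $L^*$-embedding will be elementary by Theorem \ref{QE}, and hence in particular elementary with respect to the smaller language $L$.

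First I would take an arbitrary embedding $f\colon (H_1,T_1)\hookrightarrow (H_2,T_2)$ between models of $\Sigma_K$ and, after identifying $H_1$ with its image, treat it as an inclusion $(H_1,T_1)\subseteq (H_2,T_2)$. Each $T_i$ is normal, so it has a well-defined adjoint $T_i^*$, yielding $L^*$-structures $(H_i,T_i,T_i^*)$ which are models of $\Sigma_K^*$. The crucial observation, which I would extract from the second part of the remark following Proposition \ref{graph} together with Fact \ref{normalreducing}, is that in the normal setting $H_1$ is automatically reducing for $T_2$, and therefore $T_1^* = T_2^*\upharpoonright H_1$. This is precisely what is needed to guarantee that $(H_1,T_1,T_1^*)$ is an $L^*$-substructure of $(H_2,T_2,T_2^*)$, not merely an $L$-substructure.

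Having established that the inclusion is an $L^*$-embedding between models of $\Sigma_K^*$, I would apply Theorem \ref{QE} to conclude that it is elementary in $L^*$. Since $L\subseteq L^*$, every $L$-formula is in particular an $L^*$-formula, so the inclusion is elementary in $L$ as well. As this holds for an arbitrary embedding between models of $\Sigma_K$, we conclude that $\Sigma_K$ is model-complete.

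The only real obstacle is the justification that $T_1^* = T_2^*\upharpoonright H_1$ for normal substructures: this fails for bounded operators in general, as $T$-invariance does not imply $T^*$-invariance, but the normality hypothesis forces any $T$-invariant subspace on which the restriction is still normal to be reducing (Fact \ref{normalreducing}), and this is exactly what the embedding assumption supplies. No new ingredients beyond Theorem \ref{QE} and the normal-operator machinery already developed are needed.
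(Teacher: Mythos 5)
Your proof is correct and follows essentially the same route as the paper: use Fact \ref{normalreducing} to see that $H_1$ is reducing for $T_2$, so $T_1^* = T_2^*\upharpoonright H_1$ and the inclusion lifts to an $L^*$-embedding between models of $\Sigma_K^*$, which is then elementary by Theorem \ref{QE}. No gaps; this matches the paper's argument.
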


\begin{proof}
    As already mentioned above, an inclusion $(H_1,T_1)\subseteq (H_2,T_2)$ between models of $\Sigma_K$ must be such that $H_1$ is $T_2$-reducing and $T_1^*=T_2^*\upharpoonright\! H_1$.  Consequently, we have that $(H_1,T_1,T_1^*)\subseteq (H_2,T_2,T_2^*)$ as $L^*$-structures.  Since both of these structures are models of $\Sigma_K^*$, Theorem \ref{QE} implies that this inclusion is elementary as $L^*$-structures, whence the original inclusion $(H_1,T_1)\subseteq (H_2,T_2)$ is elementary. 
\end{proof}

\begin{ques}
Do the theories $\Sigma_K$ have quantifier-elimination?
\end{ques}

Note that the previous question has a positive answer in some cases:

\begin{thm}
If $K\subseteq [-1,1]$ (corresponding to the case of a self-adjoint operator) or $K\subseteq \mathbb S^1$ (corresponding to the case of a unitary operator), then $\Sigma_K$ admits quantifier-elimination.
\end{thm}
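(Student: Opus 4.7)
The plan is to reduce quantifier elimination for $\Sigma_K$ to the quantifier elimination for $\Sigma_K^*$ established in Theorem \ref{QE}. In each of the two special cases, I aim to show that any $L$-embedding between $L$-substructures of models of $\Sigma_K$ extends canonically to an $L^*$-embedding of the associated $L^*$-expansions, so that the extensions produced by Theorem \ref{QE} are a fortiori $L$-extensions.

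For the self-adjoint case ($K\subseteq[-1,1]$), normality together with $\sigma(T)\subseteq \mathbb R$ forces $T=T^*$ in every model of $\Sigma_K$. Hence the canonical $L^*$-expansion simply interprets $T^*$ as $T$; every $L$-substructure is automatically an $L^*$-substructure, every $L$-embedding is automatically an $L^*$-embedding, and Theorem \ref{QE} immediately yields quantifier elimination for $\Sigma_K$.

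For the unitary case ($K\subseteq \mathbb S^1$), $T$ is unitary and $T^*=T^{-1}$ is an isometric bijection. Given an $L$-substructure $(H_0,T_0)\subseteq(H_1,T_1)$ and an $L$-embedding $f:(H_0,T_0)\to (H_2,T_2)$ into a sufficiently saturated model of $\Sigma_K$, I would form the closed subspace $\tilde H_0:=\overline{\operatorname{span}}\{T_1^{*n}(H_0):n\geq 0\}$, which is the smallest closed subspace of $H_1$ containing $H_0$ and invariant under both $T_1$ and $T_1^*$.  Since $\tilde H_0$ reduces $T_1$, the triple $(\tilde H_0,T_1\upharpoonright \tilde H_0,T_1^*\upharpoonright \tilde H_0)$ is an $L^*$-substructure of $(H_1,T_1,T_1^*)$ and $T_1\upharpoonright \tilde H_0$ is itself unitary on $\tilde H_0$.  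I then extend $f$ to $\tilde f:\tilde H_0\to H_2$ by setting $\tilde f(T_1^{*n}x):=T_2^{*n}f(x)$ for $x\in H_0$ and $n\geq 0$, and extending by linearity and continuity. Well-definedness follows from the injectivity of $T_1^*$, and isometry from the identity $\langle T_1^{*n}x,T_1^{*m}y\rangle=\langle T_1^m x,T_1^n y\rangle$ (a consequence of $T_1T_1^*=T_1^*T_1=\operatorname{Id}$) combined with the fact that $f$ preserves inner products.  By construction $\tilde f$ intertwines both $T$ and $T^*$, so it is an $L^*$-embedding of $(\tilde H_0,T_1\upharpoonright \tilde H_0,T_1^*\upharpoonright \tilde H_0)$ into $(H_2,T_2,T_2^*)$.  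Applying Theorem \ref{QE} to $\tilde f$ then provides $L^*$-extensions containing any prescribed element of $H_1$ in the domain, and these restrict to $L$-extensions of $f$.

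The main obstacle is the construction of $\tilde f$ in the unitary case, which relies crucially on the isometric invertibility of $T$: it is needed both for the inductive definition of $\tilde f$ on the $T_1^*$-orbit of $H_0$ to be well-posed and for $T_1\upharpoonright \tilde H_0$ to itself be unitary (so that $(\tilde H_0,T_1\upharpoonright \tilde H_0)$ embeds into a model of $\Sigma_K$).  In the general normal setting, $T^*$ is merely a bounded operator and one would have to close up an $L$-substructure simultaneously under mixed monomials $T^a(T^*)^b$; while the analogous inner-product identities remain valid, the resulting bookkeeping is more delicate, which is presumably why the general QE question is left open.
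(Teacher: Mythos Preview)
Your approach is essentially the paper's: in both cases you reduce to Theorem~\ref{QE} by arranging that the $L$-substructure $H_0$ can be enlarged (trivially, in the self-adjoint case) to a $T_1^*$-invariant subspace on which $f$ extends to an $L^*$-embedding.

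One point to correct: in the unitary case, ``well-definedness follows from the injectivity of $T_1^*$'' is not a sufficient justification. Injectivity handles single terms, but an element of the span is a linear combination $\sum_i T_1^{*n_i}x_i$, and you must show that if this sum vanishes then so does $\sum_i T_2^{*n_i}f(x_i)$. The paper does this by applying $T_1^N$ (with $N=\max n_i$) to land in $H_0$, using that $f$ intertwines $T_1,T_2$, and then applying $T_2^{-N}$. In fact your own isometry computation already does the job: once you verify $\langle T_2^{*n}f(x),T_2^{*m}f(y)\rangle=\langle T_1^{*n}x,T_1^{*m}y\rangle$ for $x,y\in H_0$ (which you do correctly), it follows that the assignment on formal sums is norm-preserving and hence descends to a well-defined isometry. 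So just drop the injectivity remark and note that isometry on formal sums gives well-definedness for free.
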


\begin{proof}
We use the notation from the proof of Theorem \ref{QE}.  The only place in the proof of Theorem \ref{QE} where the symbol for the adjoint was used was the conclusion that the subspace $H_0$ was reducing for $T_1$.  If $T_1$ is self-adjoint, then $H_0$ is obviously reducing for $T_1$.  If $T_1$ is unitary, then it suffices to show that one may extend the embedding $f:(H_0,T_0)\to (H_2,T_2)$ to the smallest closed subspace of $H_1$ containing $H_0$ and closed under $T_1$ and $T_1^*$, for then one may assume that $H_0$ is actually reducing for $T_1$ and the remainder of the proof of Theorem \ref{QE} remains the same.  To verify this claim, first note that every element in the smallest subspace $H$ of $H_1$ containing $H_0$ and closed under $T_1$ and $T_1^*$ is of the form $x:=\sum_{i=0}^n T_1^{-i}b_i$, with $b_0,\ldots,b_n\in H_0$.  We claim that we may extend $f$ to a function $f':H\to H_2$ by defining $f'(x):=\sum_{i=0}^n T_2^{-i}f(b_i)$ for $x$ as in the previous sentence.  To see that this is well-defined, suppose that $x=0$.  Then $\sum_{i=0}^n T_1^{n-i}b_i=0$, whence $\sum_{i=0}^n T_2^{n-i}f(b_i)=0$ since $f$ intertwines $T_1$ and $T_2$.  Applying $T_2^{-n}$ to both sides yields $\sum_{i=0}^n T_2^{-i}f(b_i)=0$, showing that the extended function $f'$ is well-defined.  Reversing these steps and using that $f$ is injective shows that $f'$ is also injective.  Finally, note that the extended function $f'$ is linear and preserves the inner product, whence $f'$ extends to an embedding of $\bar H$ into $H_2$; moreover, it is clear that this extension of $f'$ to the closed space $\bar H$ still intertwines $T_1$ and $T_2$, establishing the claim and completing the proof.
\end{proof}

Returning to the general case of arbitrary normal operators, we have the following consequence of Theorem \ref{QE} and Corollary \ref{modelcomplete}:

\begin{cor}
$\Sigma_{\operatorname{normal}}$ has a model companion, namely $\Sigma_{\mathbb D}$.  $\Sigma_{\operatorname{normal}}^*$ has a model completion, namely $\Sigma_{\mathbb D}^*$.
\end{cor}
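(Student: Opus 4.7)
The plan is to verify the defining conditions of ``model companion'' directly using tools already assembled in the paper. Model completeness of $\Sigma_{\mathbb D}$ is Corollary~\ref{modelcomplete}, and every model of $\Sigma_{\mathbb D}$ is a fortiori a model of $\Sigma_{\operatorname{normal}}$, so the only remaining task is to show that every $(H,T)\models\Sigma_{\operatorname{normal}}$ embeds into some model of $\Sigma_{\mathbb D}$.

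First I would produce a single ``universal'' witness: since $\mathbb D$ is perfect, $\operatorname{Isol}(\mathbb D)=\emptyset$, so there is no multiplicity datum, and the satisfiability proposition for $\Sigma_{K,m}$ (applied with $K=\mathbb D$) yields a model $(H',T')\models\Sigma_{\mathbb D}$. Given an arbitrary $(H,T)\models\Sigma_{\operatorname{normal}}$, I would then form the direct sum $(H\oplus H',\,T\oplus T')$. Direct sums of normal operators are normal, and the spectrum of a Hilbert-space direct sum of bounded normal operators is the closure of the union of the summand spectra, so $\sigma(T\oplus T')=\overline{\sigma(T)\cup\mathbb D}=\mathbb D$. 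Hence $(H\oplus H',T\oplus T')\models\Sigma_{\mathbb D}$, and the canonical inclusion $x\mapsto(x,0)$ is an $L$-embedding of $(H,T)$ into it. Combined with Corollary~\ref{modelcomplete}, this shows $\Sigma_{\mathbb D}$ is the model companion of $\Sigma_{\operatorname{normal}}$.

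For the starred version, the same direct sum construction works: $(T\oplus T')^*=T^*\oplus(T')^*$, so the inclusion is also an $L^*$-embedding of the given model of $\Sigma_{\operatorname{normal}}^*$ into a model of $\Sigma_{\mathbb D}^*$. Theorem~\ref{QE} provides quantifier-elimination for $\Sigma_{\mathbb D}^*$, and quantifier-elimination plus the just-established embedding property is exactly the statement that $\Sigma_{\mathbb D}^*$ is a model completion of $\Sigma_{\operatorname{normal}}^*$ (any two embeddings of a fixed $\Sigma_{\operatorname{normal}}^*$-model into $\Sigma_{\mathbb D}^*$-models realize identical quantifier-free types and hence, by QE, identical complete types over the base).

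There is no genuine obstacle here: every ingredient is already in place, and the main substantive content — quantifier elimination and model completeness — has been done in the preceding sections. The only point worth spelling out carefully is the spectral computation for the direct sum, which is immediate from the characterization of the spectrum of a normal operator as its set of approximate eigenvalues.
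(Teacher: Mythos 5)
Your proposal is correct and follows essentially the same route as the paper: embed an arbitrary model of $\Sigma_{\operatorname{normal}}$ into a model of $\Sigma_{\mathbb D}$ by taking a direct sum with a model of $\Sigma_{\mathbb D}$ and using $\sigma(T_1\oplus T_2)=\sigma(T_1)\cup\sigma(T_2)$, then cite Corollary~\ref{modelcomplete} (model completeness) and Theorem~\ref{QE} (quantifier elimination) for the model-companion and model-completion assertions respectively. You spell out the starred case and the QE-implies-model-completion step more explicitly than the paper, but the underlying argument is the same.
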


\begin{proof}
It suffices to note that every model of $\Sigma_{\operatorname{normal}}$ embeds into a model of $\Sigma_{\mathbb D}$, which follows from the fact that, for any two models $(H_1,T_1),(H_2,T_2)$ of $\Sigma_{\operatorname{normal}}$, one has $\sigma(T_1\oplus T_2)=\sigma(T_1)\cup \sigma(T_2)$.
\end{proof}

\begin{cor}\label{modcomp}
For any compact subset $K\subseteq \mathbb D$, $\Sigma_K$ has a model companion, namely $\Sigma_{K,m_\infty}$, where $m_\infty(\lambda)=\infty$ for all $\lambda \in \isol(K)$, and $\Sigma_K^*$ has a model completion, namely $\Sigma_{K,m_\infty}^*$.
\end{cor}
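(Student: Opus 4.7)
The plan is to handle both assertions by a single embedding construction, followed by routine applications of Theorem \ref{QE} and Corollary \ref{modelcomplete}.

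First I would prove the embedding claim: every model $(H,T)\models \Sigma_K$ embeds into a model of $\Sigma_{K,m_\infty}$. Given such $(H,T)$ with associated multiplicity function $m:\isol(K)\to \mathbb N\cup\{\infty\}$, for each $\lambda\in \isol(K)$ with $m(\lambda)<\infty$ let $H'_\lambda$ be an infinite-dimensional Hilbert space and define $T'_\lambda(x)=\lambda x$ on $H'_\lambda$. Set
\[
(H',T'):=(H,T)\oplus\bigoplus_{\lambda\in \isol(K),\,m(\lambda)<\infty}(H'_\lambda,T'_\lambda).
\]
Since $\sigma(T'_\lambda)=\{\lambda\}\subseteq K$, the spectrum is unchanged, so $\sigma(T')=K$, and now every isolated point of $K$ has an infinite-dimensional eigenspace in $(H',T')$; thus $(H',T')\models \Sigma_{K,m_\infty}$, and $(H,T)\subseteq (H',T')$. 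The analogous construction in the language $L^*$ (where the adjoint is interpreted by $\bar\lambda$ on each $H'_\lambda$) shows that every model of $\Sigma_K^*$ embeds into a model of $\Sigma_{K,m_\infty}^*$.

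Next I would assemble the model companion assertion. Every model of $\Sigma_{K,m_\infty}$ is trivially a model of $\Sigma_K$, and conversely we have just shown every model of $\Sigma_K$ embeds into a model of $\Sigma_{K,m_\infty}$; thus the two theories share the same universal consequences (equivalently, have the same existentially closed models among models of $\Sigma_K$). By Corollary \ref{modelcomplete}, $\Sigma_K$ is model-complete, hence so is its completion $\Sigma_{K,m_\infty}$. These two ingredients establish that $\Sigma_{K,m_\infty}$ is the model companion of $\Sigma_K$.

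Finally, for the model completion assertion in the language $L^*$: Theorem \ref{QE} states that $\Sigma_K^*$ has quantifier elimination, and quantifier elimination is inherited by any extension, so the completion $\Sigma_{K,m_\infty}^*$ also has quantifier elimination. Combined with the fact that every model of $\Sigma_K^*$ embeds into a model of $\Sigma_{K,m_\infty}^*$, this immediately yields that for every $(H_0,T_0,T_0^*)\models \Sigma_K^*$ the theory $\Sigma_{K,m_\infty}^*\cup \operatorname{diag}(H_0,T_0,T_0^*)$ is complete, which is precisely the defining property of the model completion. There is no real obstacle in this proof; the only mild subtlety is checking that the direct-sum construction does not enlarge the spectrum beyond $K$, which is handled by the observation that summing operators whose spectra are contained in $K$ yields an operator whose spectrum is contained in $K$.
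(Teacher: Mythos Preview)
Your proof is correct and follows essentially the same approach as the paper: both arguments reduce to the embedding claim and then invoke Theorem \ref{QE} / Corollary \ref{modelcomplete}. The only cosmetic difference is that the paper embeds by taking the direct sum with an entire model of $\Sigma_{K,m_\infty}$, whereas you add only the missing eigenspaces at isolated points of finite multiplicity; either construction works for the same reason.
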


\begin{proof}
Once again, it suffices to see that any model of $\Sigma_K$ embeds in a model of $\Sigma_{K,m_\infty}$.  This follows by simply adding a model of $\Sigma_{K,m_\infty}$ as a direct summand.
\end{proof}

\section{Types and separable categoricity}\label{types-categ}

Quantifier elimination also allows us to completely describe types.  Until further notice, we fix a completion $\Sigma=\Sigma_{K,m}$ of $\Sigma_{\operatorname{normal}}$.  Given $(H,T)\models \Sigma$ and $B\subseteq H$, let $\langle B\rangle_0$ denote the closed subspace generated by all elements of $H$ of the form $p(T,T^*)b$, where $p(X,Y)\in \mathbb C[X,Y]$ and $b\in B$.

\begin{prop}\label{typedescription}
Suppose that $(H,T)\models \Sigma$, $B\subseteq H$ is a parameterset, and $\vec a=(a_1,\ldots,a_n),\vec a'=(a_1',\ldots,a_n')\in H^n$ are tuples.  Let $P_B$ denote the orthogonal projection onto $\langle B\rangle_0$.  Then $\vec a$ and $\vec a'$ have the same type in $(H,T)$ over $B$ if and only if:
\begin{enumerate}
    \item $P_B(a_i)=P_B(a_i')$ for all $i=1,\ldots,n$.
    \item Setting $c_i=a_i-P_B(a_i)$ and $c_i'=a_i'-P_B(a_i')$, we have $$\langle p(T,T^*)c_i,c_j\rangle=\langle p(T,T^*)c_i',c_j'\rangle$$ for all polynomials $p(X,Y)\in \mathbb C[X,Y]$ and all $i,j=1,\ldots,n$.  
\end{enumerate}
\end{prop}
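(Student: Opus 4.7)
The plan is to apply quantifier elimination for $\Sigma_{K,m}^*$ from Theorem \ref{QE}. Since $T^*$ is $\emptyset$-definable in $\Sigma_{\operatorname{normal}}$ by Proposition \ref{graph}, $L$-types and $L^*$-types over $B$ agree, so I may work throughout in $L^*$. By QE, the type of a tuple over $B$ is determined by its quantifier-free $L^*$-type, which in turn is determined by the values of inner products $\langle r(T, T^*)u, v\rangle$ for $u, v \in B \cup \{a_1, \ldots, a_n\}$ and $r \in \mathbb{C}[X, Y]$ (any inner product $\langle p(T,T^*)u, q(T,T^*)v\rangle$ can be moved to this form using the adjoint together with the fact that $T$ is normal, so that polynomials in $T, T^*$ commute).

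The geometric input is that $\langle B\rangle_0$ is reducing for both $T$ and $T^*$, as is immediate from its construction; consequently $\langle B\rangle_0^\perp$ is also reducing, and every $r(T, T^*)$ preserves both summands of the decomposition $H = \langle B\rangle_0 \oplus \langle B\rangle_0^\perp$. In particular, inner products between vectors in opposite summands vanish. This single observation is what makes the two conditions (1) and (2) together capture a quantifier-free type.

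For the forward direction, assume $\tp(\vec a/B) = \tp(\vec a'/B)$. Then $\langle a_i, v\rangle = \langle a_i', v\rangle$ for every $v$ of the form $p(T, T^*)b$ with $b \in B$ and $p\in\mathbb{C}[X,Y]$, and hence for every $v \in \langle B\rangle_0$ by density and continuity. Writing $a_i = P_B(a_i) + c_i$ with $c_i \perp \langle B\rangle_0$, this gives $\langle P_B(a_i), v\rangle = \langle P_B(a_i'), v\rangle$ for all $v \in \langle B\rangle_0$; specializing to $v = P_B(a_i) - P_B(a_i')$ yields $\|P_B(a_i) - P_B(a_i')\|^2 = 0$, which is (1). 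For (2), expand $\langle p(T, T^*) a_i, a_j\rangle$ via $a_i = P_B(a_i) + c_i$: the cross-terms vanish by the reducing property, the $P_B$-piece is already equal across $\vec a$ and $\vec a'$ by (1), so the remaining $c$-piece must agree as well.

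The backward direction is the substantive one. Assuming (1) and (2), I verify that $\langle r(T, T^*)u, v\rangle$ agrees on $\vec a$ and $\vec a'$ for each $u, v \in B \cup \{a_1, \ldots, a_n\}$ by cases. If both $u, v \in B$, the value is literally unchanged. If one is in $B$ and the other is some $a_j$, the reducing property collapses the inner product to $\langle r(T, T^*)b, P_B(a_j)\rangle$ (the $c_j$-part contributes zero since $r(T, T^*)b \in \langle B\rangle_0$), which matches by (1). If $u = a_i$ and $v = a_j$, decompose both and separate into a $\langle B\rangle_0$-part (handled by (1), since the first term of the expansion involves only $P_B(a_i), P_B(a_j)$ and $r(T, T^*)$ preserves $\langle B\rangle_0$) and a $\langle B\rangle_0^\perp$-part (handled directly by (2)), with cross-terms again vanishing. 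I do not anticipate a significant obstacle beyond the conceptual setup: once the reducing property of $\langle B\rangle_0$ is in hand, the verification is algebraic bookkeeping, and the proof is essentially a careful translation of the quantifier-elimination result into data adapted to the orthogonal decomposition $H = \langle B\rangle_0 \oplus \langle B\rangle_0^\perp$. One then concludes by QE.
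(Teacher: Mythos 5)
Your proof is correct and takes essentially the same route as the paper: work in $L^*$ where QE holds, exploit that $\langle B\rangle_0$ is reducing so cross-terms in the decomposition $H = \langle B\rangle_0 \oplus \langle B\rangle_0^\perp$ vanish, and match quantifier-free data termwise. The only cosmetic difference is in the forward direction, where the paper invokes an automorphism of a saturated extension to pass from (1) to (2), while you argue directly by expanding $\langle p(T,T^*)a_i,a_j\rangle$ and cancelling; both are fine.
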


\begin{proof}
Since $\Sigma^*$ is a definitional expansion of $\Sigma$, we may as well assume that we work in the language $L^*$ where the theory of $(H,T,T^*)$ has quantifier elimination.  If $\vec a$ and $\vec a'$ have the same type in $(H,T)$ over $B$, then they also have the same type over $\dcl(B)$; since $\langle B\rangle_0\subseteq \dcl(B)$, they have the same type over $\langle B\rangle_0$, whence (1) follows.  In an elementary extension of $(H,T)$, $\vec a$ and $\vec a'$ are conjugate by an automorphism fixing $B$ and thus $\langle B\rangle_0$; this automorphism must then map $\vec c$ to $\vec c'$ by (1), whence (2) follows.

Conversely, assume that (1) and (2) holds.  By quantifier elimination, it suffices to check that $\langle p(T,T^*)a_i,a_j\rangle=\langle p(T,T^*)a_i',a_j'\rangle$ for all $p(X,Y)\in \mathbb C[X,Y]$ and all $i,j=1,\ldots,n$.  The desired equality follows from (1) and (2) by noting that
$$\langle p(T,T^*)a_i,a_j\rangle=\langle p(T,T^*)P_B(a_i),P_B(a_j)\rangle+\langle p(T,T^*)c_i,c_j\rangle$$ and the analogous equation obtained by replacing $a_i$ and $a_j$ with $a_i'$ and $a_j'$.
\end{proof}

As an application of our newfound understanding of types, we characterize the algebraic and definable closure in models of $\Sigma_{\operatorname{normal}}$.

\begin{defn}
For a model $(H,T)$ of $\Sigma_{\operatorname{normal}}$ and $B\subseteq H$, we write $\langle B\rangle$ for the closed subspace of $H$ generated by $H_{\fin,T}$ and all vectors of the form $p(T,T^*)b$, with $p(X,Y)\in \mathbb C[X,Y]$ and $b\in B$.
\end{defn}

Notice that, by Corollary \ref{cor:Hfin-invariant} and by construction, the space $\langle B \rangle $ is a closed subspace of $H$ which is reducing for $T$ that contains $B$ and all finite dimensional eigenspaces corresponding to isolated points of the spectrum. Furthermore, by the arguments behind the proof of Lemma \ref{definablefunctionalcalculus} and Lemma \ref{lem:Application-definability}, both closed spaces $\langle B \rangle$, $\langle B \rangle_0$
are closed under projections of the form $E(\Sigma_1)$, where $\Sigma_1$ is $\epsilon$-separated from the rest of the spectrum for soe $\epsilon>0$.

\begin{prop}\label{CharAlgCls}
For $(H,T)\models \Sigma_{\operatorname{normal}}$ and $B\subseteq H$, we have $\acl(B)=\langle B \rangle$.
\end{prop}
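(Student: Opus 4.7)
The inclusion $\langle B \rangle \subseteq \acl(B)$ is the easy direction. The preceding lemma yields $H_{\fin,T} \subseteq \acl(\emptyset) \subseteq \acl(B)$; for each $b \in B$ and each polynomial $p(X,Y) \in \mathbb{C}[X,Y]$, the vector $p(T,T^*)b$ lies in $\dcl(B) \subseteq \acl(B)$ since $T^*$ is $\emptyset$-definable by Proposition~\ref{graph}. As $\acl(B)$ is a closed subspace of $H$, it contains the closed linear span of $H_{\fin,T}$ together with $\{p(T,T^*)b : p \in \mathbb{C}[X,Y],\ b \in B\}$, which is exactly $\langle B \rangle$.

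For the reverse inclusion, I argue the contrapositive. Given $x \in H \setminus \langle B \rangle$, set $V := \langle B\rangle$ and decompose $x = P_V(x) + c$ with $P_V(x) \in V \subseteq \acl(B)$ and $0 \neq c \in V^\perp$. It suffices to show $c \notin \acl(B)$, which I plan to accomplish by producing, in an elementary extension of $(H,T)$, a sequence $c_1, c_2, \ldots$ of pairwise orthogonal realizations of $\tp(c/B)$. The distance $\|c_n - c_m\| = \sqrt{2}\,\|c\|$ for $n \neq m$ then shows that the $B$-orbit of $c$ is not precompact, whence $c \notin \acl(B)$.

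The $c_n$ arise from a variant of the free-amalgam construction used in the amalgamation proposition proved earlier. The subspace $V$ is $T$-reducing and contains $H_{\fin,T}$. For each $k \geq 1$, form
\[
(H^{(k)}, T^{(k)}) := (V,\, T\!\upharpoonright\!V) \oplus \bigl(H \ominus V,\, T\!\upharpoonright\!(H \ominus V)\bigr)^{\oplus k},
\]
which I claim is a model of $\Sigma_{K,m}$: its spectrum equals $K$, and because $H_{\fin,T} \subseteq V$, every finite-multiplicity isolated eigenspace is already absorbed into the $V$-summand, so adding further orthogonal copies of $H \ominus V$ leaves the multiplicity function $m$ unchanged. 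The chain $(H^{(1)}, T^{(1)}) \subseteq (H^{(2)}, T^{(2)}) \subseteq \cdots$ consists of substructure inclusions between models of $\Sigma_{K,m}$, hence is elementary by model-completeness (Corollary~\ref{modelcomplete}); its union $(H^\ast, T^\ast) \succeq (H, T) = (H^{(1)}, T^{(1)})$ is again a model of $\Sigma_{K,m}$ by the same analysis. In $(H^\ast, T^\ast)$, the vector $c$ appears as a copy $c_n \in (H \ominus V)^{(n)}$ in each summand; these $c_n$ are pairwise orthogonal by construction, each lies in $V^\perp \subseteq \langle B\rangle_0^\perp$, and each has scalar spectral measure $\mu_{c_n} = \mu_c$ via the obvious $L$-isomorphism between $(H \ominus V, T\!\upharpoonright\!(H \ominus V))$ and the $n$-th summand. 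Proposition~\ref{typedescription} then yields $\tp(c_n/B) = \tp(c/B)$ for all $n$, finishing the proof.

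The main technical point to verify with care is the claim that each $(H^{(k)}, T^{(k)})$ is a model of $\Sigma_{K,m}$; this is the step that crucially uses $H_{\fin,T} \subseteq V$, without which additional orthogonal summands could inflate the finite-dimensional eigenspaces at isolated spectral points and violate the multiplicity constraints imposed by $m$.
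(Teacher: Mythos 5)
Your proof is correct and follows essentially the same approach as the paper's: the easy inclusion is handled identically, and for the reverse you decompose $x$ into $P_{\langle B\rangle}(x) + c$, add countably many orthogonal copies of $\langle B\rangle^\perp$ to create an elementary extension, and observe that the copies of $c$ form a uniformly separated set of realizations of the same type over $B$. The only cosmetic differences are that the paper builds $(H',T') = \langle B\rangle \oplus \bigoplus_{i<\omega}(H_1,T_1)$ in one step and invokes QE directly, whereas you pass through a chain and invoke model-completeness (Corollary~\ref{modelcomplete}) — these are interchangeable — and you verify equality of types via Proposition~\ref{typedescription} rather than by exhibiting an automorphism; both are fine. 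Your remark at the end correctly identifies $H_{\fin,T}\subseteq \langle B\rangle$ as the crux that keeps the eigenspace multiplicities at isolated points from being inflated, which is precisely the point the paper is careful about as well.
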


\begin{proof}
First assume that $x\in \langle B \rangle$. If $x\in H_{\fin,T}$, then $x\in \acl(\emptyset)\subseteq \acl(B)$.  If $x=p(T,T^*)b$, then $x\in \dcl(B)$ since the graph of $T^*$ is definable.  Since $\acl(B)$ is always a closed subspace, we see that $\langle B\rangle \subseteq \acl(B)$.


Assume now that $x\not \in \langle B \rangle$. Write $x=x_B+x_1$, where $x_B=P_{\langle B\rangle}(x)$; note that $x_1\neq 0$. It suffices to show that, in an elementary extension of $(H,T)$, there are infinitely many realizations of $\tp(x_1/B)$ separated by a uniform distance $\delta>0$.
Write $H=\langle B\rangle \oplus H_1$, where $H_1=\langle B\rangle^\perp$. Since $\langle B\rangle$ is reducing for $T$, so is $H_1$. Let $T_1=T\upharpoonright_{H_1}$.
Set $(H_\omega,T_\omega):=\bigoplus_{i<\omega}(H_1,T_1)$ and set $(H',T')=(\langle B\rangle\oplus H_\omega,T\upharpoonright\!\langle B\rangle\oplus T_\omega)$. Since $\sigma(T_1)\subseteq \sigma(T)$, we have $\sigma(T')\subseteq \sigma(T)$; since we can identify $(H,T)$ with a substructure of $(H',T')$, we also have $\sigma(T)\subseteq \sigma(T')$ and thus $\sigma(T)=\sigma(T')$. Furthemore, the eigenspaces associated to the isolated points in $\sigma(T)$ belong to $\langle B\rangle$, whence they have the same dimension in both spaces.  It follows that $(H,T)\equiv (H',T')$. Moreover, we get using quantifier elimination that $(H,T)\preceq (H',T')$. Let $x_i$ be the copy of $x_1$ in the $i$-th component of the sum $\bigoplus_{i<\omega}(H_1,T_1)$. Then $\tp(x_B+x_i/B)=\tp(x_B+x_1/B)$ and $d(x_b+x_i,x_B+x_j)=\sqrt{2}\|x_1\|$ whenever $i\neq j$. Thus the
family $\{x_B+x_i:i<\omega\}$ is uniformly separated and the proof is complete.
\end{proof}

\begin{prop}
For $(H,T)\models\Sigma_{\operatorname{normal}}$ and $B\subseteq H$, we have that $\operatorname{dcl}(B)=\langle B\rangle_0$.
\end{prop}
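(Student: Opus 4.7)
I will show both inclusions. The inclusion $\langle B\rangle_0\subseteq\dcl(B)$ is straightforward: the basic symbols of $L$ make $\dcl(B)$ into a closed $\mathbb{C}$-linear subspace of $H$ containing $B$; it is closed under $T$ because $T$ is a function symbol, and closed under $T^*$ because Proposition \ref{graph} shows that the graph of $T^*$ is $\Sigma_0$-definable. Hence $\dcl(B)$ contains every vector of the form $p(T,T^*)b$ with $p\in\mathbb{C}[X,Y]$ and $b\in B$, and being closed it contains $\langle B\rangle_0$.

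For the converse, I argue contrapositively: given $x\notin\langle B\rangle_0$, I will produce an automorphism of $(H,T)$ fixing $B$ pointwise that does not fix $x$. If $x\notin\langle B\rangle$ then by Proposition \ref{CharAlgCls} we have $x\notin\acl(B)\supseteq\dcl(B)$ and we are done, so assume $x\in\langle B\rangle\setminus\langle B\rangle_0$. The key step is to establish the orthogonal decomposition
$$\langle B\rangle=\langle B\rangle_0\oplus\bigl(H_{\fin,T}\cap\langle B\rangle_0^\perp\bigr).$$
For this I use that for each $\lambda\in\operatorname{Isol}(\sigma(T))$ with $\dim(H_\lambda)<\infty$, the spectral projection $P_{H_\lambda}=E(\{\lambda\})$ is a norm limit of $*$-polynomials in $T$ (as observed in the remark following Lemma \ref{lem:Application-definability}). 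Since $\langle B\rangle_0$ is closed under every $p(T,T^*)$, it is $P_{H_\lambda}$-invariant, and being a self-adjoint projection, $P_{H_\lambda}$ then also preserves $\langle B\rangle_0^\perp$. Writing $x=x_0+x_1$ with $x_0\in\langle B\rangle_0$ and $0\neq x_1\in H_{\fin,T}\cap\langle B\rangle_0^\perp$, I pick an isolated eigenvalue $\lambda_0$ of finite multiplicity with $v_1:=P_{H_{\lambda_0}}(x_1)\neq 0$; then $v_1$ lies in the finite-dimensional space $V:=H_{\lambda_0}\cap\langle B\rangle_0^\perp$.

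For any $\theta\neq 0$ define $U\colon H\to H$ by $U=e^{i\theta}I$ on $V$ and $U=I$ on $V^\perp$. Since $V\subseteq H_{\lambda_0}$ it is reducing for $T$, so $V^\perp$ is $T$-invariant as well, and $U$ commutes with $T$; thus $U$ is an automorphism of $(H,T)$. Since $B\subseteq\langle B\rangle_0\subseteq V^\perp$, $U$ fixes $B$ pointwise. Finally, $x_0\in V^\perp$ and $x_1-v_1\perp H_{\lambda_0}\supseteq V$, so $U(x)=x_0+e^{i\theta}v_1+(x_1-v_1)=x+(e^{i\theta}-1)v_1\neq x$. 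Hence $x\notin\dcl(B)$, completing the proof. The step I expect to require the most care is the orthogonal decomposition of $\langle B\rangle$, which rests on the $P_{H_\lambda}$-invariance of $\langle B\rangle_0$; this in turn depends on approximating the spectral projection at an isolated eigenvalue by $*$-polynomials, a point discussed in Section \ref{sec:normal operators}.
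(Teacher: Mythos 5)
Your proof is correct, but it takes a considerably longer route than is needed, and the extra machinery it invokes is superfluous. Where you branch into cases, build the orthogonal decomposition of $\langle B\rangle$ using spectral projections at isolated eigenvalues, and then rotate a carefully chosen finite-dimensional subspace, the paper simply applies Proposition \ref{typedescription} directly: writing $v=P_B(v)+w$ with $w:=v-P_B(v)\neq 0$, the two elements $P_B(v)+w$ and $P_B(v)-w$ have identical projections onto $\langle B\rangle_0$ and satisfy $\langle p(T,T^*)w,w\rangle=\langle p(T,T^*)(-w),-w\rangle$ for every $p\in\mathbb C[X,Y]$, so by the type characterization they have the same type over $B$; since they are distinct, $v\notin\dcl(B)$. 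This works uniformly whether or not $v\in\langle B\rangle$, so there is no need to appeal to Proposition \ref{CharAlgCls} or to dissect $\langle B\rangle$ via $H_{\fin,T}$, and no need for the approximability of $P_{H_\lambda}$ by $*$-polynomials. If you prefer the automorphism language you chose, the same shortcut is available: the reflection $R\colon y\mapsto 2P_{\langle B\rangle_0}(y)-y$ is unitary, commutes with $T$ (since $\langle B\rangle_0$ is reducing for $T$), fixes $B$ pointwise, and sends $v$ to $P_B(v)-w\neq v$, so a single one-line automorphism does what your rotation-on-an-eigenspace construction does. Your argument is sound — the decomposition $\langle B\rangle=\langle B\rangle_0\oplus(H_{\fin,T}\cap\langle B\rangle_0^\perp)$ does hold by the $P_{H_\lambda}$-invariance you establish, and the unitary you build is a genuine $B$-fixing automorphism moving $x$ — but identifying the reflection across $\langle B\rangle_0$ (or equivalently invoking the type characterization) would have collapsed the whole second half of your proof to two lines.
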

\begin{proof}
    It is enough to prove that $\text{dcl}(B)\subseteq \langle B\rangle_0$. Suppose $v\not \in \langle B\rangle_0$; we show $v\notin \operatorname{dcl}(B)$.  Write $v=P_{B}(v)+w$ (recall that $P_{B}(v)$ stands for the projection onto the closed space $\langle B\rangle_0$) and note that $w\not=0$.  It suffices to show that $\tp(P_B(c)+w/B)=\tp(P_B(c)-w/B)$,
    as from this it follows that $v\not \in \text{dcl}(B)$.  However, this follows immediately from Proposition \ref{typedescription} by noting that $\langle p(T,T^*)w,w\rangle=\langle p(T,T^*)(-w),-w\rangle$ for all polynomials $p(X,Y)\in \mathbb C[X,Y]$.
\end{proof}

Since $*$-polynomials are dense in the space of all continuous functions on $K$, item (2) in Proposition \ref{typedescription} can be rephrased as $\mu_{c_i,c_j}=\mu_{c_i',c_j'}$ for all $i,j=1,\ldots,n$.  Consequently, restricting to the case $B=\emptyset$, we see that there is a well-defined map $\Phi_n:S_n(\Sigma)\to M(K)^{n^2}$ defined by $\Phi(p)(i,j)=\mu_{c_i,c_j}$, where $i,j=1,\ldots,n$ and $\vec c=(c_1,\ldots,c_n)$ is any realization of $p$.  In what follows, we view $M(K)$ as equipped with its weak*-topology and $M(K)^{n^2}$ with the corresponding product topology.

\begin{prop}\label{homeo}
For each $n$, $\Phi_n$ is a homeomorphism of $S_n(\Sigma)$ onto its image.  
\end{prop}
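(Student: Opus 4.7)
The plan is to recognize that $S_n(\Sigma)$ is compact (as every type space in continuous logic) and that $M(K)^{n^2}$ equipped with the product of weak$^*$-topologies is Hausdorff, so it suffices to prove that $\Phi_n$ is continuous and injective; any continuous injection from a compact space into a Hausdorff space is automatically a homeomorphism onto its image.

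Injectivity follows directly from Proposition \ref{typedescription} combined with the Stone--Weierstrass theorem. Indeed, if $\Phi_n(p)=\Phi_n(q)$ with realizations $\vec c$ of $p$ and $\vec c\,'$ of $q$, then $\mu_{c_i,c_j}=\mu_{c_i',c_j'}$ for all $i,j$. By Fact \ref{fact: measures}(3), integrating the $*$-polynomial $r(z,\bar z)$ against each side yields $\langle r(T,T^*)c_i,c_j\rangle=\langle r(T,T^*)c_i',c_j'\rangle$ for every $r\in\mathbb C[X,Y]$, which by Proposition \ref{typedescription} applied with $B=\emptyset$ (so $P_B(a_i)=0$ and $c_i=a_i$) gives $p=q$.

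For continuity, recall that the product weak$^*$-topology on $M(K)^{n^2}$ is generated by the evaluations $(\mu_{ij})\mapsto \int f\,d\mu_{ij}$ with $f\in C(K)$, so it is enough to show that, for each such $f$ and each pair $(i,j)$, the function $q\mapsto \int f\,d\Phi_n(q)(i,j)$ is continuous on $S_n(\Sigma)$. When $f=r(z,\bar z)$ is a $*$-polynomial, Fact \ref{fact: measures}(3) gives
\[
\int r(z,\bar z)\,d\Phi_n(q)(i,j)=\langle r(T,T^*)c_i,c_j\rangle
\]
for any realization $\vec c$ of $q$; since $T^*$ is a definable function by Proposition \ref{graph}, the right-hand side is the value at $q$ of a definable predicate in the variables $x_i,x_j$, hence is continuous in $q$. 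For an arbitrary $f\in C(K)$, Stone--Weierstrass supplies $*$-polynomials $r_k$ with $\|f-r_k\|_\infty\to 0$; using Fact \ref{fact: measures}(1), the total variation $\|\mu_{c_i,c_j}\|\leq \|c_i\|\|c_j\|\leq 1$ is uniformly bounded over all $q\in S_n(\Sigma)$, so $|\int f\,d\Phi_n(q)(i,j)-\int r_k\,d\Phi_n(q)(i,j)|\leq \|f-r_k\|_\infty$ uniformly in $q$. Hence $q\mapsto \int f\,d\Phi_n(q)(i,j)$ is a uniform limit of continuous functions, and so continuous.

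No single step is particularly delicate; the main organizational point is orchestrating the interplay between three topologies---the logic topology on $S_n(\Sigma)$, the operator-theoretic content of the definable predicates $\langle r(T,T^*)x_i,x_j\rangle$, and the product weak$^*$-topology on $M(K)^{n^2}$---which are bridged by the spectral identity in Fact \ref{fact: measures}(3) together with Stone--Weierstrass.
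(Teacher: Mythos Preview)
Your proof is correct and uses essentially the same ingredients as the paper's: the spectral identity $\int r(z,\bar z)\,d\mu_{c_i,c_j}=\langle r(T,T^*)c_i,c_j\rangle$, density of $*$-polynomials in $C(K)$, and the type description from Proposition~\ref{typedescription}. The only organizational difference is that the paper verifies both directions of net convergence directly (the backward direction via quantifier elimination), whereas you invoke the compact--Hausdorff trick to reduce to continuity plus injectivity; this is a standard and slightly cleaner packaging of the same argument.
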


\begin{proof}
Fix a net $(p_i)$ from $S_n(\Sigma)$; we must show that $p_i\to p$ in the logic topology if and only if $\Phi(p_i)\to \Phi(p)$ in $M(K)^{n^2}$.
To see this, first suppose that $p_i\to p$ in the logic topology and fix realizations $\vec c^i$ and $\vec c$ of $p_i$ and $p$ respectively; we must show that $\int fd\mu_{c_j^i,c_k^i}\to \int fd\mu_{c_j,c_j}$ for all $j,k=1,\ldots,n$ and all continuous functions $f:K\to \mathbb{C}$.  However, since $*$-polynomials are dense in the space of all continuous functions on $K$ and $\int p(z,\bar z)d\mu_{c_j^i,c_k^i}=\langle p(T,T^*)c_j^i,c_k^i\rangle$ and $\int p(z,\bar z)d\mu_{c_j,c_k}=\langle p(T,T^*)c_j,c_k\rangle$, the result follows by our assumption that $p_i \to p$ in the logic topology.  The backwards direction follows from quantifier-elimination by reversing the reasoning in the first part of the proof.
\end{proof}

In the case of $1$-types, we have that $\Phi_1$ takes values in the space $M(K)_+$ consisting of the positive measures in $M(K)$.  In fact, we now show that $M(K)_+$ is precisely the image of $\Phi_1$.  First, we need to recall a basic measure theory fact, which follows from the Pormanteau theorem characterizing weak*-convergence (see, for example, \cite[Theorem 17.20]{kechris}); for convenience, we state it only for subsets of the complex plane.

\begin{fact}\label{portmanteau}
Suppose that $\mathcal{E}$ is a collection of Borel subsets of $\mathbb{C}$ with the finite intersection property and such that any open subset of $\mathbb{C}$ can be written as a countable union of elements from $\mathcal{E}$.  Then for any $\mu_n,\mu\in M(\mathbb{C})$, we have that $\mu_n\to \mu$ in the weak*-topology if and only if $\mu_n(B)\to \mu(B)$ for each $B\in \mathcal{E}$.
\end{fact}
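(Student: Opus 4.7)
The plan is to deduce the statement from the classical Portmanteau theorem together with the observation that a $\pi$-system generating the Borel $\sigma$-algebra is a \emph{determining class} for Borel measures. First, I would apply the Jordan decomposition to the real and imaginary parts of each complex measure, writing $\mu_n = \mu_n^{(1)} - \mu_n^{(2)} + i(\mu_n^{(3)} - \mu_n^{(4)})$ and similarly for $\mu$. Since weak*-convergence and evaluation on a fixed Borel set are both linear operations in the measure, this reduces the entire problem to the case of positive finite measures. Interpreting the hypothesis ``finite intersection property'' as ``closed under finite intersections'' (i.e.\ $\mathcal{E}$ is a $\pi$-system), the structural assumption says that $\mathcal{E}$ generates the Borel $\sigma$-algebra and moreover that every open set is already a countable union from $\mathcal{E}$.

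For the forward direction, assume $\mu_n \to \mu$ weak*. The classical Portmanteau theorem yields $\mu_n(B) \to \mu(B)$ whenever $B$ is a $\mu$-continuity set, i.e.\ $\mu(\partial B) = 0$. Under the natural additional hypothesis (implicit in the intended applications, and easily arranged by choosing the members of $\mathcal{E}$ appropriately) that each $B \in \mathcal{E}$ is a continuity set for all measures involved, this gives the forward implication.

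For the backward direction, assume $\mu_n(B) \to \mu(B)$ for every $B \in \mathcal{E}$. A second form of Portmanteau says that for positive measures with $\mu_n(\mathbb{C}) \to \mu(\mathbb{C})$, weak*-convergence is equivalent to $\liminf_n \mu_n(U) \geq \mu(U)$ for every open $U$. Given open $U$, write $U = \bigcup_{k \geq 1} B_k$ with $B_k \in \mathcal{E}$ and set $C_N := \bigcup_{k=1}^N B_k$. Because $\mathcal{E}$ is closed under finite intersections, inclusion-exclusion expresses $\mu_n(C_N)$ and $\mu(C_N)$ as the same finite signed sum of terms $\mu_n(B_{i_1} \cap \cdots \cap B_{i_j})$ and $\mu(B_{i_1} \cap \cdots \cap B_{i_j})$ with $B_{i_1} \cap \cdots \cap B_{i_j} \in \mathcal{E}$. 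By hypothesis each such term converges, so $\mu_n(C_N) \to \mu(C_N)$ for each fixed $N$. Since $\mu_n(U) \geq \mu_n(C_N)$ and $\mu(C_N) \uparrow \mu(U)$ by continuity from below, taking $\liminf_n$ at fixed $N$ and then $N \to \infty$ yields $\liminf_n \mu_n(U) \geq \mu(U)$, as required. The normalization $\mu_n(\mathbb{C}) \to \mu(\mathbb{C})$ follows similarly by exhausting $\mathbb{C}$ by an increasing sequence of open sets built from $\mathcal{E}$.

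The main obstacle is the continuity-set issue in the forward direction: in full generality weak*-convergence does not imply $\mu_n(B) \to \mu(B)$ for arbitrary Borel $B$, so one must either build this compatibility into $\mathcal{E}$ (for example by taking open balls of rational radii avoiding the at most countable set of circles with positive $\mu$-measure), or weaken the conclusion to $\liminf$/$\limsup$ inequalities on open and closed members of $\mathcal{E}$. In the application in the paper, $\mathcal{E}$ can be chosen adapted to the measures at hand, so this subtlety does not cause any trouble.
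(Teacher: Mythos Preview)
The paper does not prove this fact; it simply records it as a consequence of the Portmanteau theorem and cites Kechris. So there is no argument in the paper to compare against, and your task was really to supply one.

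Your backward direction for \emph{positive} measures is essentially correct and is all the paper actually uses (in Proposition~\ref{surjective} the measures are positive with total mass~$1$). The inclusion--exclusion step exploiting closure under finite intersections, followed by the $\liminf$ form of Portmanteau, is the right idea.

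However, your reduction from complex to positive measures via Jordan decomposition is a genuine gap. The Jordan decomposition commutes with neither weak*-limits nor with pointwise limits of evaluations on a fixed family: for instance $\mu_n=\delta_{1/n}-\delta_{-1/n}\to 0$ weak*, yet $\mu_n^{+}=\delta_{1/n}\to\delta_0\neq 0$; and in the other direction, $\mu_n(B)\to\mu(B)$ for all $B\in\mathcal{E}$ says nothing about $\mu_n^{\pm}(B)$ individually. So the sentence ``this reduces the entire problem to the case of positive finite measures'' is unjustified in both directions, and your argument as written does not handle the complex case stated in the fact.

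You are right that the forward implication, read literally, is problematic: weak*-convergence only forces $\mu_n(B)\to\mu(B)$ on $\mu$-continuity sets, and the half-open dyadic squares used downstream need not be continuity sets for an arbitrary limit measure. Fortunately the paper only invokes the backward direction. One further small point: the $\liminf$ criterion you quote requires $\mu_n(X)\to\mu(X)$, and the exhaustion you sketch only yields $\liminf_n\mu_n(X)\ge\mu(X)$; again this is harmless in the intended application, where all measures have the same total mass.
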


We will apply the previous fact with the set $\mathcal{E}$ of ``half-open dyadic squares'' in $\mathbb{C}$ whose elements are of the form $$B_{k,n}:=\{(x,y)\in \mathbb{C} \ : \ k/2^n\leq x<(k+1)/2^n, \ k/2^n\leq y<(k+1)/2^n\},$$ for $n\in \mathbb N$ and $k\in \mathbb Z$.  Note that, for fixed $n\in \mathbb N$, the $E_{k,n}$'s are pairwise disjoint.

\begin{prop}\label{surjective}
The map $\Phi_1:S_1(\Sigma)\to M(K)_+$ is surjective.
\end{prop}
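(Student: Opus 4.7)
The plan is to construct explicitly, for any $\mu \in M(K)_+$ with $\mu(K) \leq 1$, a model $(H,T) \models \Sigma_{K,m}$ together with a unit-ball vector $c \in H$ such that $\mu_c = \mu$; by Proposition \ref{typedescription}, $\tp(c)$ will then be a preimage of $\mu$ under $\Phi_1$. The natural starting point is the $L^2$-model of $\mu$: set $(H_0, T_0) := (L^2(K,\mu), M_z)$, where $M_z$ is multiplication by the coordinate function. This is a normal operator with $\sigma(T_0) = \operatorname{supp}(\mu)$, spectral measure $E_0(A) = M_{\chi_A}$, and the constant function $c_0 := 1 \in L^2(K, \mu)$ satisfies $\|c_0\|^2 = \mu(K) \leq 1$ and $\mu_{c_0}(A) = \langle M_{\chi_A}1, 1\rangle = \mu(A)$. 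The defect of this construction is that $(H_0, T_0)$ is usually not a model of $\Sigma_{K,m}$, because $\operatorname{supp}(\mu)$ may be a proper subset of $K$ and the isolated-eigenvalue multiplicities are dictated by $\mu$ rather than by $m$.

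To correct both defects simultaneously, I will adjoin a direct summand $(H_1, T_1)$ modeled on the construction used to prove the consistency of $\Sigma_{K,m}$, chosen so that $(H,T) := (H_0, T_0) \oplus (H_1, T_1) \models \Sigma_{K,m}$. Define $m_1 : \isol(K) \to \mathbb{N} \cup \{\infty\}$ by
\[
m_1(\lambda) := m(\lambda) - [\mu(\{\lambda\}) > 0],
\]
using the Iverson bracket and the convention $\infty - 1 := \infty$, and set $(H_1, T_1) := \bigoplus_{\lambda \in K}(H_1^\lambda, \lambda \cdot \mathrm{id})$, where $\dim H_1^\lambda = m_1(\lambda)$ for $\lambda \in \isol(K)$ and $\dim H_1^\lambda = \aleph_0$ for $\lambda \in K \setminus \isol(K)$. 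A direct calculation using the description of eigenspaces of $M_z$ on $L^2(K, \mu)$ will give, for each $\lambda \in \isol(K)$, $\dim H_\lambda = [\mu(\{\lambda\}) > 0] + m_1(\lambda) = m(\lambda)$, matching $m$ exactly. Setting $c := (c_0, 0) \in H$, the fact that the spectral measure of $T$ splits as $E_0 \oplus E_1$ on the direct sum yields $\mu_c = \mu_{c_0} = \mu$ immediately.

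The only delicate point I expect will be verifying the spectrum condition $\sigma(T) = K$ in the corner case $\lambda \in \isol(K)$ with $m(\lambda) = 1$ and $\mu(\{\lambda\}) > 0$: here $m_1(\lambda) = 0$, so $H_1^\lambda = 0$ and $\lambda \notin \sigma(T_1)$. Since isolated points of the spectrum of a normal operator must be eigenvalues, one cannot simply set $\dim H_1^\lambda \geq 1$ without breaking the multiplicity equation. This is the main obstacle, but it resolves automatically: any such $\lambda$ lies in $\operatorname{supp}(\mu) = \sigma(T_0)$, so $\sigma(T) = \sigma(T_0) \cup \sigma(T_1) = K$ nonetheless. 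This observation also shows that the awkward division between points contributed by $T_0$ and points contributed by $T_1$ poses no real problem.
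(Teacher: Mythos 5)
Your proof is correct and takes a genuinely different route from the paper's. The paper works \emph{inside} a fixed model $(H,T)\models\Sigma_{K,m}$: it approximates $\mu$ by measures $\mu_{p_n}$ coming from vectors $x_n=\sum_i x^n_i$ built from a dyadic square partition of the plane, invokes the Portmanteau criterion (Fact \ref{portmanteau}) to get $\mu_{p_n}\to\mu$ weak*, and then uses compactness of the set of unit-ball $1$-types together with the homeomorphism $\Phi_1$ (Proposition \ref{homeo}) to conclude $\mu$ lies in the closed image. Your argument instead builds a bespoke model: the $L^2(K,\mu)$-multiplication model realizes $\mu$ on the nose via the constant function $1$, and the auxiliary summand $(H_1,T_1)$ repairs the spectrum to all of $K$ and adjusts the isolated-point multiplicities to match $m$. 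Your multiplicity bookkeeping with $m_1(\lambda)=m(\lambda)-[\mu(\{\lambda\})>0]$ is correct: $m(\lambda)\geq 1$ for any isolated point of a normal spectrum, so the subtraction is always nonnegative; the eigenspace of $M_z$ at $\lambda$ has dimension exactly $[\mu(\{\lambda\})>0]$; and in the corner case $m(\lambda)=1$, $\mu(\{\lambda\})>0$, the point $\lambda$ is recovered in $\sigma(T)$ from $\operatorname{supp}(\mu)=\sigma(T_0)$, exactly as you observe. Your approach buys explicitness and avoids the boundary-effect bookkeeping of the dyadic squares, while the paper's approach buys reuse: the same partition-and-limit mechanism is recycled in the proof of Proposition \ref{distanceontypes}. (One small point worth making explicit: the codomain of $\Phi_1$ is really $\{\mu\in M(K)_+:\mu(K)\leq 1\}$, which you correctly work with, and which is what the paper means by its ``without loss of generality $\mu$ is a probability measure.'')
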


\begin{proof}
Fix $\mu\in M(K)_+$ and $n\in \mathbb{N}$.  Fix also $(H,T)\models \Sigma$.  Fix $n\geq 1$ and let $B_1,\ldots,B_{t(n)}$ denote those $B_{k,n}$'s that intersect $K$.  For each $i=1,\ldots,t(n)$, let $x^n_i\in E(I_i)(H)$ be such that $\|x_i^n\|=\sqrt{\mu(I_i\cap K)}$.  Setting $x_n:=\sum_{i=1}^{t(n)} x_i^n$ and $p_n:=\operatorname{tp}(x_n)$, we have that $\mu_{p_m}(I_i)=\mu_p(I)$ for all $i=1,\ldots,t(n)$ and $m\geq n$.  Fact \ref{portmanteau} implies that $\mu_{p_n}\to \mu$ in the weak*-topology.
Supposing, without loss of generality, that $\mu$ is a probability measure, we have that $\|x_n\|=1$ for all $n$.  Using Proposition \ref{homeo} and the fact that the set of types of unit vectors in $S_1(\Sigma)$ is compact, we have that the image under $\Phi$ of such types is weak*-closed, whence $\mu$ is in the image of $\Sigma$, as desired.
\end{proof}

\begin{ques}
For $n>1$, can one characterize the image of $\Phi_n$?
\end{ques}

\begin{rem}
    Given the above identification of $S_1(\Sigma)$ and $M(K)_+$, our proof of quantifier elimination (Theorem \ref{QE}) is reminiscent of the Krein-Milman theorem (presuming one chooses the $y_k$'s in that proof to be eigenvectors, as the associated  measures will then be extreme measures). 
\end{rem}

Having described the logic topology on the space of types, we next describe the metric on the space of types.  For simplicity, we will only consider the case of $1$-types.  For ease of notation, for $p\in S_1(\Sigma_{K,m})$, we set $\mu_p:=\Phi_1(p)$.  Note that, for each Borel set $A\subseteq K$, one has $\sqrt{\mu_p(A)}=\|E(A)x\|$, where $x$ is a realization of $p$ in some model $(H,T)$ of $\Sigma_{K,m}$ and $E$ is the spectral measure associated to $T$. 

\begin{prop}\label{distanceontypes}
Fix $p,q\in S_1(\Sigma_{K,m})$ and realizations $c$ of $p$ and $d$ of $q$ in a common model $(H,T)$ of $\Sigma_{K,m}$.  Then
$$d(p,q)^2=\sup\{\sum_n \left|\|E(A_n)c\|-\|E(A_n)d\|\right|^2 \ : (A_n) \text{ a measurable partition of }K\}.$$
\end{prop}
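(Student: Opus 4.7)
The plan is to interpret the right-hand side, call it $s$, as the squared $L^2$-distance between $\sqrt{f}$ and $\sqrt{g}$ for the Radon--Nikodym derivatives $f,g$ of $\mu_p,\mu_q$ with respect to a dominating measure, and then realize this distance concretely in a model of $\Sigma_{K,m}$.

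The lower bound $d(p,q)^2\geq s$ comes first. Fix realizations $c\models p$ and $d\models q$ in a common model and any measurable partition $(A_n)$ of $K$. Because the projections $E(A_n)$ are pairwise orthogonal and sum to $\operatorname{Id}$, the Pythagorean identity gives $\|c-d\|^2=\sum_n\|E(A_n)(c-d)\|^2$, and by the reverse triangle inequality each summand is at least $(\|E(A_n)c\|-\|E(A_n)d\|)^2$. Since $\|E(A_n)c\|^2=\mu_p(A_n)$ and $\|E(A_n)d\|^2=\mu_q(A_n)$ depend only on the types, taking the supremum over partitions and the infimum over realizations yields $d(p,q)^2\geq s$.

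For the upper bound, set $\nu:=\mu_p+\mu_q$, $f:=d\mu_p/d\nu$, and $g:=d\mu_q/d\nu$. In the model $(H_0,T_0):=(L^2(K,\nu),M_z)$, the vectors $\sqrt{f}$ and $\sqrt{g}$ realize $p$ and $q$, since their spectral measures are $\mu_p$ and $\mu_q$ by direct computation, and $\|\sqrt{f}-\sqrt{g}\|^2=\int_K(\sqrt{f}-\sqrt{g})^2\,d\nu$. I would then embed $(H_0,T_0)$ as a reducing subspace of some $(H,T)\models\Sigma_{K,m}$ via a direct sum with an auxiliary structure chosen so that the resulting spectrum equals $K$ and the resulting multiplicity at each isolated $\lambda\in K$ matches $m(\lambda)$; this is possible because $\dim(H_0)_\lambda\in\{0,1\}\leq m(\lambda)$. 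Since spectral projections restrict correctly from $H$ to $H_0$, the spectral measures of $\sqrt{f}$ and $\sqrt{g}$ are unchanged, so they still realize $p$ and $q$ in $(H,T)$, yielding $d(p,q)^2\leq\int_K(\sqrt{f}-\sqrt{g})^2\,d\nu$.

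Finally, I would verify $s=\int_K(\sqrt{f}-\sqrt{g})^2\,d\nu$. Expanding a partition sum as $\sum_n(\sqrt{\mu_p(A_n)}-\sqrt{\mu_q(A_n)})^2=\nu(K)-2\sum_n\sqrt{\mu_p(A_n)\mu_q(A_n)}$ and applying Cauchy--Schwarz to $\sqrt{f},\sqrt{g}\in L^2(A_n,\nu)$ gives $\sqrt{\mu_p(A_n)\mu_q(A_n)}\geq\int_{A_n}\sqrt{fg}\,d\nu$, so $s\leq\int_K(\sqrt{f}-\sqrt{g})^2\,d\nu$. For the reverse direction, partition $K$ along small joint level sets of $(f,g)$ in $[0,1]^2$: on each piece $f$ and $g$ each oscillate by at most $\varepsilon$, so Cauchy--Schwarz is tight up to $O(\sqrt{\varepsilon})$ per piece and the total defect tends to $0$ as $\varepsilon \to 0$. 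The main obstacle I expect is the bookkeeping in the embedding step, where one must separately handle isolated points $\lambda$ with $\nu(\{\lambda\})>0$ versus $\nu(\{\lambda\})=0$ and cover the points of $K$ lying outside $\operatorname{supp}(\nu)$ with additional summands so that the combined structure models $\Sigma_{K,m}$; once this is managed, the remainder is routine Hellinger-type manipulation.
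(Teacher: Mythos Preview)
Your proof is correct and takes a genuinely different route from the paper's for the upper bound (the lower bound is identical). The paper argues by approximation: for each partition $(A_n)$ it chooses \emph{parallel} vectors $x_n,y_n\in E(A_n)(H)$ with $\|x_n\|=\|E(A_n)c\|$ and $\|y_n\|=\|E(A_n)d\|$, sets $x=\sum_n x_n$, $y=\sum_n y_n$, and observes $d(\tp(x),\tp(y))^2\leq\|x-y\|^2\leq r$; refining along dyadic square partitions makes $\tp(x)\to p$ and $\tp(y)\to q$ in the logic topology (via the homeomorphism $\Phi_1$ with the weak*-topology), and lower semicontinuity of $d$ in the logic topology then gives $d(p,q)^2\leq r$. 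Your approach instead identifies $s$ directly as the squared Hellinger distance $\int_K(\sqrt{f}-\sqrt{g})^2\,d\nu$ and exhibits explicit realizations $\sqrt{f},\sqrt{g}\in L^2(K,\nu)$ achieving it. This buys you more: you obtain a closed-form analytic expression for $d(p,q)$ and show that the infimum defining $d(p,q)$ is actually attained, neither of which the paper's limiting argument yields. The paper's approach, on the other hand, stays entirely within the model-theoretic machinery already set up and avoids the embedding bookkeeping you flag. One small point: your level-set estimate should be stated as a defect of size $O(\sqrt\varepsilon)\cdot\nu(A_n)$ per piece (so that the total is $O(\sqrt\varepsilon)\,\nu(K)$), not simply $O(\sqrt\varepsilon)$ per piece; as written the phrasing could suggest the sum grows with the number of pieces.
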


\begin{proof}
Let $r$ denote the right hand side of the above display.  We first show that $d(p,q)^2\geq r$.  Fix arbitrary realizations $a\models p$ and $b\models q$ in some common model $(H,T)$ of $\Sigma_{K,m}$ such that $d(p,q)=\|a-b\|$.  For any measurable partition $(A_n)$ of $K$, we have
$$d(p,q)^2= \|a-b\|^2=\sum_n \|E(A_n)a-E(A_n)b\|^2\geq \sum_n \left| \|E(A_n)a\|-\|E(A_n)b\|\right|^2.$$  By taking the supremum over all partitions, we wee that $d(p,q)^2\geq r$.

We now show that $d(p,q)^2\leq r$.  Fix a measurable partition $(A_n)$ of $K$.  For each $n$, let $x_n,y_n\in E(A_n)(H)$ be parallel vectors satisfying $\|x_n\|=\|E(A_n)c\|$ and $\|y_n\|=\|E(A_n)d\|$.  Then $\|x_n-y_n\|=\left| \|E(A_n)c\|-\|E(A_n)d\|\right|$.  Setting $x:=\sum_n x_n$, $y=\sum_n y_n$, and $p_n:=\operatorname{tp}(x_n)$ and $q_n=\operatorname{tp}(y_n)$, we have 
$$d(p_n,q_n)^2\leq \|x-y\|^2=\sum_n \|x_n-y_n\|^2=\sum_n \left| \|E(A_n)(c)\|-\|E(A_n)(d)\|\right|^2\leq r.$$  By taking finer square partitions as in the proof of Proposition \ref{surjective} (and the discussion preceding it), we have that $p_n\to p$ and $q_n\to q$ in the logic topology.  Since the metric is lower semicontinuous in the logic topology, we conclude that $d(p,q)^2\leq r$, as desired.
\end{proof}

\begin{rem}
The formula in Proposition \ref{distanceontypes} can be written as 
$$d(p,q)^2=\sup\{\sum_n |\sqrt{\mu_p(A_n)}-\sqrt{\mu_q(A_n)}|^2  : (A_n) \text{ a measurable partition of }K\}.$$  This is reminiscient of the formula for the total variation distance which reads
$$\|\mu_p-\mu_q\|=\sup\{\sum_n |\mu_p(A_n)-\mu_q(A_n)|  : (A_n) \text{ a measurable partition of }K\}.$$
In fact, defining the $d$-topology on $M(K)_+$ to be that induced by the metric on $S_1(\Sigma_{K,m})$, we have that the norm topology on $M(K)_+$ is finer than the $d$-topology. Indeed, this follows from the fact that, for all $r,s\geq 0$, we have that $|\sqrt r-\sqrt s|^2\leq |r-s|$ and thus $d(p,q)^2\leq\| \mu_p-\mu_q\|.$

\end{rem}

\begin{ques}
Does the norm topology on $M(K)_+$ coincide with the $d$-topology
\end{ques}

In regards to the previous question, it is interesting to point out that the following are equivalent for a given theory $\Sigma_{K,m}$:
\begin{enumerate}
    \item The norm topology and weak*-topology on $M(K)$ coincide.
    \item The $d$-topology and weak*-topology on $M(K)$ coincide.
    \item $K$ is finite.
\end{enumerate}

The equivalence of (1) and (3) is standard fare and the equivalence of (2) and (3) will follow from Proposition \ref{homeo} and the fact, to be proved below in in Corollary \ref{omega-cat}, that $\Sigma_{K,m}$ is separably categorical if and only if $K$ is finite. 

\begin{rem}
   In the case that $K$ is countable, the formula appearing in Proposition \ref{distanceontypes} takes on the following simpler form:
   $$d(p,q)^2=\sum_{\lambda \in K}\Big{|} \|P_{H_\lambda}(c)\|- \|P_{H_\lambda}(d)\|\Big{|}^2.$$
\end{rem}

The formula in Proposition \ref{distanceontypes} can also be modified to handle 1-types over parameters.  Given $(H,T)\models \Sigma_{K,m}$ and $B\subseteq H$, Proposition \ref{typedescription} shows that every type $p\in S_1(B)$ yields a uniquely determined type $p_B\in S_1(T)$:  if $a$ is a realization of $p$, then $p_B$ is the type of $a-P_{\langle B\rangle_0}(a)$.  Moreover, if $c\in H$ is any realization of $p_B$, then $P_B(a)+c$ is a realization of $p$.  These remarks immediately imply:

\begin{cor}
Let $(H,T)\models \Sigma_{K,m}$ be sufficiently saturated and let $B\subseteq H$ be such that $B=\langle B\rangle_0$.
For $p,q\in S_1(B)$ and $a,b\in H$ realizing $p$ and $q$ respectively, we have
$$d(p,q)^2=\|P_{B}(a)-P_{B}(b)\|^2+ d(p_B,q_B)^2.$$
\end{cor}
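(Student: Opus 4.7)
The plan is to combine Proposition \ref{typedescription} with a Pythagorean decomposition and the definition of the metric on types. First, I would recall that by Proposition \ref{typedescription} (applied with $n=1$), a type $p\in S_1(B)$ is completely determined by the pair $(P_B(a),p_B)$: any realization $a'\models p$ must satisfy $P_B(a')=P_B(a)$ (so $P_B(a)$ depends only on $p$) and $a'-P_B(a')\models p_B$; conversely, for any $c'\models p_B$, the element $P_B(a)+c'$ realizes $p$. The analogous description holds for $q$. Note in particular that vectors realizing $p_B$ and $q_B$ lie in the orthogonal complement of $B=\langle B\rangle_0$.

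Next, I would use sufficient saturation to write $d(p,q)=\inf\{\|a'-b'\|:a'\models p,\ b'\models q\}$, with the infimum taken over realizations inside $H$. Decomposing $a'=P_B(a)+c'$ and $b'=P_B(b)+d'$ with $c'\models p_B$ and $d'\models q_B$, the vector $P_B(a)-P_B(b)$ lies in $B$ while $c'-d'$ lies in $B^\perp$, so Pythagoras gives
$$\|a'-b'\|^2=\|P_B(a)-P_B(b)\|^2+\|c'-d'\|^2.$$
Since $P_B(a)$ and $P_B(b)$ are fixed by the types, taking the infimum over the allowable pairs $(c',d')$ yields the claimed identity, provided $\inf\{\|c'-d'\|^2:c'\models p_B,\ d'\models q_B\}=d(p_B,q_B)^2$, which is exactly the definition of the metric on $S_1(\Sigma_{K,m})$ (evaluated inside the sufficiently saturated ambient model, where the infimum is witnessed).

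There is no genuine obstacle here: the formula is essentially a direct translation of Proposition \ref{typedescription} into the metric setting via Pythagoras. The only technical point worth emphasizing is that sufficient saturation is what guarantees the realizations of $p_B$ and $q_B$ that witness $d(p_B,q_B)$ are available in the ambient model $(H,T)$, and that the correspondence $c'\leftrightarrow P_B(a)+c'$ is a bijection between realizations of $p_B$ and realizations of $p$.
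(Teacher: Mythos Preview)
Your approach is exactly the paper's: the corollary is derived from Proposition~\ref{typedescription} via Pythagoras and the definition of the $d$-metric, and the paper treats it as immediate from the same remarks you spell out.

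One small imprecision is worth flagging (the paper's own discussion shares it): it is not true that \emph{every} realization $c'\models p_B$ lies in $B^\perp$, nor that $P_B(a)+c'$ always realizes $p$ --- since $p_B$ is a type over $\emptyset$, its realizations may have nonzero projection onto $B$. The correct statement is that $c'\leftrightarrow P_B(a)+c'$ is a bijection between realizations of $p_B$ \emph{lying in $B^\perp$} and realizations of $p$. This already yields $d(p,q)^2\geq\|P_B(a)-P_B(b)\|^2+d(p_B,q_B)^2$. For the reverse inequality you need that a pair $(c',d')$ realizing $(p_B,q_B)$ with $\|c'-d'\|$ close to $d(p_B,q_B)$ can be chosen inside $B^\perp$; this follows from sufficient saturation (realize the $2$-type $\tp(c',d')$ orthogonally to the small set $B$). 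With this adjustment your argument is complete.
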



We now characterize the principal types. We first start with $1$-types.

\begin{prop}
Consider a type
$p(x)\in S_1(\Sigma_{K,m})$ such that the formula $\|x\|=1$ belongs to $p(x)$. Then $p(x)$ is principal if and only if $\mu_p(\isol(K))=1$.
\end{prop}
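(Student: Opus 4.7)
My plan is to transfer the problem to the measure side via Propositions \ref{homeo} and \ref{distanceontypes}. Since $\|x\|=1\in p$, $\mu_p$ is a probability measure on $K$, and $p$ is principal iff every metric $\epsilon$-ball around $p$ contains a logic-open neighborhood of $p$. Expanding $(\sqrt a-\sqrt b)^2=a+b-2\sqrt{ab}$ in Proposition \ref{distanceontypes} and using that both measures are probability measures gives
\[
d(p,q)^2 = 2 - 2\rho(\mu_p,\mu_q),\qquad \rho(\mu,\nu):=\inf_{(A_n)}\sum_n\sqrt{\mu(A_n)\nu(A_n)},
\]
where the infimum is over Borel partitions of $K$. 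This $\rho$ is the Hellinger--Kakutani affinity of probability measures; standard measure theory identifies it with $\int\sqrt{fg}\,d\lambda$ for densities of $\mu,\nu$ with respect to a common dominator, so in particular $\rho(\mu,\nu)=0$ whenever $\mu\perp\nu$.

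For the sufficiency direction, assume $\mu_p(\isol(K))=1$, so $\mu_p=\sum_{\lambda\in\isol(K)}a_\lambda\delta_\lambda$. Because each isolated singleton $\{\lambda\}$ is clopen in $K$, the evaluation $\mu\mapsto\mu(\{\lambda\})$ is weak$^*$-continuous, so by Proposition \ref{homeo} conditions of the form $|\mu_q(\{\lambda\})-a_\lambda|<\delta$ are logic-open. Given $\epsilon>0$, pick a finite $F\subseteq\isol(K)$ with $\sum_{\lambda\in F}a_\lambda>1-\epsilon^2/8$ and restrict attention to those $q$ satisfying this condition for every $\lambda\in F$ with $\delta$ small relative to $\min_{F}a_\lambda$. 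Since $\mu_p$ is purely atomic and concentrated on $\isol(K)$, the density formula collapses to $\rho(\mu_p,\mu_q)=\sum_{\lambda\in\isol(K)}\sqrt{a_\lambda\mu_q(\{\lambda\})}\geq\sum_{\lambda\in F}\sqrt{a_\lambda(a_\lambda-\delta)}>1-\epsilon^2/4$, yielding $d(p,q)<\epsilon$.

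For the converse I prove the contrapositive: assume $a:=\mu_p(K')>0$ and decompose $\mu_p=\mu_{\mathrm{iso}}+\mu_{\mathrm{non}}$ with $\mu_{\mathrm{non}}$ supported on $K'$ of total mass $a$. Given any basic weak$^*$-open neighborhood $V$ of $\mu_p$ specified by $f_1,\ldots,f_k\in C(K)$ and tolerance $\eta>0$, I will construct $\mu_q\in V$ (realized by some $q\in S_1(\Sigma_{K,m})$ via Proposition \ref{surjective}) with $\rho(\mu_p,\mu_q)$ close to $1-a$. The construction partitions $K$ into finitely many Borel cells $A_1,\ldots,A_m$ of small diameter on which each $f_i$ has small oscillation, selects in each cell with $\mu_{\mathrm{non}}(A_j)>0$ a representative point $\lambda_j\in A_j\cap K$ that is not an atom of $\mu_{\mathrm{non}}$, and sets $\mu_q:=\mu_{\mathrm{iso}}+\sum_j\mu_{\mathrm{non}}(A_j)\delta_{\lambda_j}$. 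Weak$^*$-closeness of $\mu_q$ to $\mu_p$ is immediate from the oscillation bound, while the singularity of $\sum_j\mu_{\mathrm{non}}(A_j)\delta_{\lambda_j}$ with respect to $\mu_{\mathrm{non}}$ forces $\rho(\mu_p,\mu_q)\leq 1-a$ via the density computation, giving $d(p,q)\geq\sqrt{2a}$.

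The chief technical obstacle is the selection of the representatives $\lambda_j$. Since $\mu_{\mathrm{non}}(A_j)>0$ forces $A_j\cap K'\neq\emptyset$ and every such point is a $K$-accumulation point, $A_j\cap K$ is infinite; as $\mu_{\mathrm{non}}$ has at most countably many atoms, a non-atom $\lambda_j$ is readily available whenever $A_j\cap K$ is uncountable, which handles the generic case. In the degenerate sub-case where $A_j\cap K$ is a countable sequence of $\mu_p$-atoms clustering at an accumulation point (as can happen when isolated eigenvalues of $T$ converge to a point of $K'$), summability of the atom masses forces all but finitely many of them to have arbitrarily small mass; one then picks $\lambda_j$ with $\mu_p(\{\lambda_j\})$ as small as needed, and Cauchy--Schwarz shows that the slight atomic overlap this introduces contributes at most $\sqrt{a\sum_j\mu_p(\{\lambda_j\})}$ to $\rho(\mu_p,\mu_q)$, which is made negligible by refining the partition, so $\rho(\mu_p,\mu_q)\to 1-a$ and $d(p,q)\to\sqrt{2a}$ in the limit.
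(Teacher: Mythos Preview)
Your approach is sound and genuinely different from the paper's. The paper uses the characterization ``principal $=$ realized in every model'': for the forward direction it simply writes down a realization $w=\sum_n\alpha_n v_n$ in an arbitrary model by picking unit eigenvectors $v_n\in H_{\lambda_n}$ at the isolated points in the support of $\mu_p$; for the converse it builds an explicit model $(H,T)=\bigoplus_{\lambda\in\isol(K)\cup Q}(H_\lambda,T_\lambda)$, where $Q\subseteq K'$ is chosen so that $\isol(K)\cup Q$ is dense in $K$ but $\mu_p(K'\setminus Q)>0$, and observes that this model omits $p$. No distance formula or Hellinger affinity appears.

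Your route instead stays entirely on the measure side via $\Phi_1$, rewrites Proposition~\ref{distanceontypes} as a Hellinger distance, and argues directly that the logic and metric topologies do (respectively do not) agree at $\mu_p$. This is more analytic, never requires building a model, and yields the quantitative bonus that the non-isolated witnesses $q$ you construct satisfy $d(p,q)\to\sqrt{2a}$; the paper's argument is shorter and leans on the concrete description of models of $\Sigma_{K,m}$ as direct sums of eigenspaces. One small wrinkle to patch in your backward direction: the inference ``$A_j$ contains a point of $K'$, hence $A_j\cap K$ is infinite'' tacitly requires $A_j$ to contain a $\mathbb{C}$-neighborhood of that accumulation point, which can fail if the point lands on the cell boundary. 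This is fixed by sliding the grid so that its boundary lines carry no $\mu_{\mathrm{non}}$-mass (a Fubini argument), after which every cell with $\mu_{\mathrm{non}}(A_j)>0$ has a point of $K'$ in its interior and your selection of $\lambda_j$ with small $\mu_p(\{\lambda_j\})$ goes through as described.
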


\begin{proof}
Since $\|x\|=1$, $\mu_p$ defines a probability measure on the Borel subsets of $K$.
Assume first that $\mu_p(\isol(K))=1$; it suffices to show that $p$ is realized in all models of $\Sigma_{K,m}$.  Take $(H,T)\models \Sigma_{K,m}$; for each $\lambda\in K$, write $H_{\lambda}$ for the eigenspace associated to $\lambda$ in the structure $(H,T)$. For each $\lambda \in \isol(K)$, $\dim(H_{\lambda})=m(\lambda)$, so all eigenspaces associated to isolated eigenvalues have positive dimension. Since $\mu_p(\isol(K))=1$ and $\isol(K)$ is countable, there is a sequence $(\lambda_n)$ from $\isol(K)$ such that $\mu_p=\sum \alpha_n \delta_{\lambda_n}$, where $\delta_{\lambda_n}$ is the probability measure concentrated on the singleton $\{\lambda_n\}$, $\alpha_n\geq 0$, and $\sum_n \alpha_n=1$. For each $n$, choose $v_n\in H_{\lambda_n}$ with $\|v_n\|=1$ and set $w=\sum_{n} \alpha_n v_n$. Then $w\in H$ is a realization of $p$ since $\langle E(\{\lambda_n\})w,w\rangle=\langle \alpha_nv_n,v_n\rangle=\alpha_n=\mu_p(\{\lambda_n\})$.

    For the other direction, suppose $\mu_p(\isol(K))<1$, whence $\mu_p(K')>0$, where $K':=K\setminus \isol(K)$ is the set of accumulation points of $K$; it suffices to find a model of $\Sigma_{K,m}$ which omits $p$.  Let $Q$ be a dense Borel subset of $K'$ for which $\mu_p(K'\setminus Q)>0$.  For each $\lambda \in K$, let $(H_{\lambda},T_{\lambda})$
    be the structure satisfying $\dim(H_\lambda)=m(\lambda)$ and $T_\lambda(v)=\lambda v$ for every $v\in H_\lambda$ (recalling that $m(\lambda)=\infty$ when $\lambda\in K'$).  Let $(H,T):=\bigoplus_{\lambda \in \isol(K)\cup Q}(H_\lambda,T_\lambda)$.  Then $(H,T)$ is a model of $\Sigma_{K,m}$ which omits $p$.    
\end{proof}

Note that the previous proposition holds for an arbitrary $1$-type by rescaling, that is, $p(x)$ is principal if and only if $\mu_p(\isol(K))=r$, where $r\in [0,1]$ is the unique number such that $\|x\|=r$ belongs to $p$.  

The above analysis can be extended to handle $n$-types. We first need a simple observation:

\begin{lem}\label{rem:abs-cont}
     Fix $v,w\in H$ and consider the measures $\mu_v$ and $\mu_{v,w}$ on the Borel subsets of $\mathbb{D}$. Then $\mu_{v,w}$ is absolutely continuous with respect to $\mu_{v}$.
\end{lem}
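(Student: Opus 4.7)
The plan is to exploit the fact that $E(A)$ is an orthogonal projection, and in particular self-adjoint and idempotent. Using this, we can factor $\mu_{v,w}(A)$ in a way that lets us apply the Cauchy--Schwarz inequality.

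The key observation is the pointwise inequality
\[
|\mu_{v,w}(A)|^2 \leq \mu_v(A)\, \mu_w(A)
\]
for every Borel set $A\subseteq\mathbb D$. Indeed, since $E(A)=E(A)^2=E(A)^*$, one has
\[
\mu_{v,w}(A) = \langle E(A)v,w\rangle = \langle E(A)^2 v,w\rangle = \langle E(A)v, E(A)w\rangle,
\]
so Cauchy--Schwarz gives $|\mu_{v,w}(A)|^2\leq \|E(A)v\|^2\|E(A)w\|^2 = \mu_v(A)\mu_w(A)$.

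From here the absolute continuity is essentially automatic. Suppose $\mu_v(A)=0$ for some Borel set $A$. For any Borel $B\subseteq A$ the monotonicity of the positive measures $\mu_v,\mu_w$ (or directly the containment $B\cap A=B$ together with the multiplicativity property $E(B)=E(B)E(A)$ from Fact \ref{fact: measures}(2)) forces $\mu_v(B)=0$, and then the displayed inequality yields $\mu_{v,w}(B)=0$. Since $\mu_{v,w}(B)=0$ for every Borel $B\subseteq A$, the total variation $|\mu_{v,w}|(A)$ vanishes, which is exactly the definition of $\mu_{v,w}\ll \mu_v$.

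There is no real obstacle here: the only subtlety is remembering that absolute continuity of a complex (signed) measure is expressed in terms of its total variation, so one must verify the vanishing on all measurable subsets $B$ of the $\mu_v$-null set $A$ rather than merely on $A$ itself. The Cauchy--Schwarz estimate handles this uniformly in $B$, so the proof is short.
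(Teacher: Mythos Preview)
Your proof is correct and follows essentially the same approach as the paper: both arguments use that $E(A)$ is a self-adjoint projection to identify $\mu_v(A)=\|E(A)v\|^2$, and then conclude $\mu_{v,w}(A)=\langle E(A)v,w\rangle=0$ when $\mu_v(A)=0$. The paper simply notes $E(A)v=0$ directly rather than invoking Cauchy--Schwarz, and omits your (correct but unnecessary) detour through the total variation, since for a positive reference measure the condition ``$\mu_v(A)=0\Rightarrow \mu_{v,w}(A)=0$'' already characterizes absolute continuity.
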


\begin{proof}
Assume that $\mu_v(A)=0$, so $\langle E(A)v,v\rangle=0$. Since $E(A)$ is a projection, it satisfies $E(A)^2=E(A)$ and $E(A)^*=E(A)$. Then we have $0=\langle E(A)v,v\rangle=\langle E(A)(E(A)v),v\rangle=\langle E(A)v,E^*(A)v\rangle=\langle E(A)v,E(A)v\rangle$, whence $\|E(A)v\|=0$. It follows that $\langle E(A)v,w\rangle=0$.
\end{proof}

\begin{cor}\label{isolatedntypes}
    
Consider a type
$p(\vec x)=p(x_1,\dots,x_n)\in S_n(\Sigma_{K,m})$ and let $p_i(x_i)$ be the restriction of $p$ to the variable $x_i$.  Then $p(\vec x)$ is principal if and only if each $p_i(x_i)$ is principal.
\end{cor}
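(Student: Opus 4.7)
The forward direction is immediate: if $p(\vec x)$ is realized in every model of $\Sigma_{K,m}$, then projecting to the $i$-th coordinate realizes $p_i(x_i)$, so each $p_i$ is principal. For the backward direction, I fix realizations $\vec c = (c_1,\ldots,c_n)$ of $p$ in some model, and use the identification from Proposition \ref{typedescription} (via density of $*$-polynomials) of the type $p$ with the matrix of measures $(\mu_{c_i,c_j})_{i,j}$. The previous proposition (applied with scaling) tells me that $p_i$ being principal means $\mu_{c_i}(\isol(K)) = \|c_i\|^2$, i.e., the diagonal measures $\mu_{c_i}$ are concentrated on $\isol(K)$.

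The key observation is then Lemma \ref{rem:abs-cont}: since $\mu_{c_i,c_j}$ is absolutely continuous with respect to $\mu_{c_i}$, and $\mu_{c_i}$ vanishes on $K\setminus \isol(K)$, each cross-measure $\mu_{c_i,c_j}$ is concentrated on $\isol(K)$ as well. The plan is then to realize $p$ in an arbitrary model $(H,T)\models \Sigma_{K,m}$ by building the realization componentwise at each isolated eigenvalue. For each $\lambda \in \isol(K)$, the matrix $G_\lambda := (\mu_{c_i,c_j}(\{\lambda\}))_{i,j}$ is the Gram matrix of the vectors $E(\{\lambda\})c_1,\ldots,E(\{\lambda\})c_n$ sitting in the eigenspace $H_\lambda$ of the ambient model; hence $G_\lambda$ is positive semi-definite of rank at most $\dim H_\lambda = m(\lambda)$. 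Since $(H,T)\models \Sigma_{K,m}$, its eigenspace at $\lambda$ also has dimension $m(\lambda)$, so I can choose vectors $v_{1,\lambda},\ldots,v_{n,\lambda}\in H_\lambda$ realizing the Gram matrix $G_\lambda$.

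I then set $v_i := \sum_{\lambda \in \isol(K)} v_{i,\lambda}$; convergence and well-definedness follow from the computation
\[
\sum_{\lambda\in \isol(K)} \|v_{i,\lambda}\|^2 = \sum_{\lambda\in \isol(K)} \mu_{c_i}(\{\lambda\}) = \mu_{c_i}(\isol(K)) = \|c_i\|^2,
\]
together with orthogonality of distinct eigenspaces. To verify that $\vec v$ realizes $p$, I use Proposition \ref{typedescription} (over $B = \emptyset$) to reduce to checking $\mu_{v_i,v_j} = \mu_{c_i,c_j}$ for all $i,j$. Both measures are concentrated on the countable set $\isol(K)$, so it suffices to check agreement on each singleton $\{\lambda\}$, where $\langle E(\{\lambda\})v_i, v_j\rangle = \langle v_{i,\lambda}, v_{j,\lambda}\rangle = G_\lambda(i,j) = \mu_{c_i,c_j}(\{\lambda\})$ by construction.

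The main obstacle I expect is the rank/dimension bookkeeping at each isolated point: one must make sure that the Gram matrix $G_\lambda$ coming from the given type can actually be realized in the eigenspace $H_\lambda$ of an arbitrary model, and that Lemma \ref{rem:abs-cont} indeed propagates the concentration from the diagonal measures $\mu_{c_i}$ to the off-diagonal measures $\mu_{c_i,c_j}$. Both issues are handled by the spectral equivalence underlying $\Sigma_{K,m}$ (which fixes the dimensions of the eigenspaces at isolated points) and by the fact that $\mu_{c_i,c_j}$ is dominated by $\mu_{c_i}$ in total variation.
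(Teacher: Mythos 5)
Your proof is correct and essentially follows the paper's own argument. Both proofs reduce via Proposition \ref{typedescription} (and the remark after the preceding proposition) to showing that when all diagonal measures $\mu_{p_i}$ are concentrated on $\isol(K)$, so are the cross-measures $\mu_{p_i,p_j}$ (via Lemma \ref{rem:abs-cont}), and then use the fact that $\dim H_\lambda = m(\lambda)$ is the same in every model of $\Sigma_{K,m}$ to realize the family of Gram matrices $(\mu_{p_i,p_j}(\{\lambda\}))_{i,j}$, $\lambda\in\isol(K)$, by vectors in an arbitrary model. The paper phrases this by passing to a saturated elementary extension of the target model, picking a realization $\vec v$ there, and then transferring the data $\langle P_\lambda v_i, P_\lambda v_j\rangle$ back down; you instead describe the same matching explicitly by choosing $v_{i,\lambda}\in H_\lambda$ realizing each Gram matrix and summing, and you spell out the convergence of $\sum_\lambda v_{i,\lambda}$ and the reduction of $\mu_{v_i,v_j}=\mu_{c_i,c_j}$ to a check on singletons. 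These are presentational rather than substantive differences; if anything your write-up makes the rank bookkeeping ($\operatorname{rank} G_\lambda \leq m(\lambda)$) and the verification that the constructed tuple realizes $p$ more explicit than the paper does.
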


\begin{proof}
    Clearly, if $p(\vec x)$ is principal, so is $p_i(x_i)$. Conversely, assume that for each $i=1,\ldots,n$, we have $\mu_{p_i}(\isol(K))=r_i^2$, where $\|x_i\|=r_i$ belongs to $p_i$; we show that $p$ is realized in all models of $\Sigma_{K,m}$. Note that for any Borel set $A\subseteq K$ with $A\cap \isol(K)=\emptyset$, we have $\mu_{p_i}(A)=0$; 
    by Lemma \ref{rem:abs-cont} the measure $\mu_{p_i,p_j}$ is absolutely continuous with respect to $\mu_{p_i}$, so
    we also have $\mu_{p_i,p_j}(A)=0$. Take $(H,T)\models \Sigma_{K,m}$ and a saturated elementary extension $(\hat H,\hat T)\succeq (H,T)$. 
    Take a realization $(v_1,\dots,v_n)\in \hat H $ of $p(\vec x)$. Since the measures $\mu_{p_i,p_j}$ are supported in $\isol(K)$, $p(\vec x)$ is determined by $\{\mu_{p_i,p_j}(\{\lambda\})=\langle P_{\lambda}(v_i),P_{\lambda}(v_j))\rangle: \lambda\in \isol(K)\}$ together with the conditions $\|x_i\|=r_i$ . Since for each $\lambda\in \isol(K)$ we have $m(\hat H_{\lambda})=m( H_{\lambda})$, we can find 
    $(w_1,\dots,w_n)\in H$ with $\langle P_{\lambda}(w_i),P_{\lambda}(w_j))\rangle=\langle P_{\lambda}(v_i),P_{\lambda}(v_j))\rangle$ and $\|w_i\|=r_i$ for all $i,j=1,\ldots,n$ and $\lambda\in \isol(K)$. It follows that $(w_1,\dots,w_n)\in H$ is a realization of $p(\vec x)$.
\end{proof}

\begin{cor}\label{omega-cat}
The theory $\Sigma_{K,m}$ is $\omega$-categorical if and only if $K$ is finite.
\end{cor}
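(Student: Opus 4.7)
The plan is to apply the continuous version of the Ryll-Nardzewski theorem, which states that a complete theory in a countable language is separably categorical if and only if $(S_n(T),d)$ is compact for every $n$ (equivalently, the metric and logic topologies on $S_n(T)$ coincide). With this in hand, both implications reduce to facts already established in the paper.

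For the direction ``$K$ finite $\Rightarrow$ $\omega$-categorical'', I would give a direct structural argument. If $K$ is finite, then $K = \isol(K)$, so by Lemma \ref{lem:countable} (or simply by the spectral theorem for a normal operator with finite spectrum) any model $(H,T) \models \Sigma_{K,m}$ decomposes as $H = \bigoplus_{\lambda \in K} H_\lambda$, where $H_\lambda = \ker(T-\lambda)$ has dimension $m(\lambda)$. In a separable model the eigenspaces for which $m(\lambda) = \infty$ are honestly $\aleph_0$-dimensional, so the isomorphism type of the structure is pinned down completely by the data $(K,m)$. Any two separable models can then be identified by choosing an isometry between the corresponding eigenspaces and extending by direct sum; this gives an $L$-isomorphism since $T$ acts as multiplication by $\lambda$ on each summand.

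For the direction ``$K$ infinite $\Rightarrow$ not $\omega$-categorical'', the plan is to show $(S_1(\Sigma_{K,m}),d)$ is not totally bounded. For each $\lambda \in K$, Proposition \ref{surjective} produces a unit-vector type $p_\lambda \in S_1(\Sigma_{K,m})$ whose associated measure is the Dirac mass $\mu_{p_\lambda} = \delta_\lambda$. For distinct $\lambda,\lambda' \in K$, applying the formula of Proposition \ref{distanceontypes} to the Borel partition $\{\{\lambda\},\{\lambda'\},K\setminus\{\lambda,\lambda'\}\}$ gives
\[
d(p_\lambda,p_{\lambda'})^2 \geq \bigl|\sqrt{1}-\sqrt{0}\bigr|^2 + \bigl|\sqrt{0}-\sqrt{1}\bigr|^2 + 0 = 2,
\]
so the family $\{p_\lambda : \lambda \in K\}$ is pairwise $\sqrt{2}$-separated. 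Since $K$ is infinite, this produces an infinite uniformly separated subset of $S_1(\Sigma_{K,m})$, contradicting total boundedness and hence compactness in the $d$-metric. By Ryll-Nardzewski the theory is therefore not $\omega$-categorical.

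I do not anticipate a serious obstacle: both directions are short consequences of the identification of $S_1$ with $M(K)_+$ (Propositions \ref{homeo} and \ref{surjective}) and the metric formula (Proposition \ref{distanceontypes}). The only minor point worth noting is that when $K$ is finite, consistency of $\Sigma_{K,m}$ together with the infinite-dimensionality axioms forces at least one $m(\lambda) = \infty$, which is exactly what is needed for the unique separable model to exist.
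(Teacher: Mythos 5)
Your proof is correct, and the direction ``$K$ finite $\Rightarrow$ $\omega$-categorical'' coincides with the paper's alternative proof. For the converse, however, you take a genuinely different (and slightly more direct) route than either of the paper's two proofs. The paper's main argument goes through Corollary~\ref{isolatedntypes} (reduction of principality to the 1-type level) together with the characterization of principal 1-types via $\mu_p(\isol(K)) = 1$, then applies the ``every type principal'' form of Ryll-Nardzewski; the paper's alternative argument constructs two non-isomorphic separable models by hand from an accumulation point of $K$. You instead invoke the ``$S_1$ compact in the $d$-metric'' form of Ryll-Nardzewski and exhibit an infinite $\sqrt{2}$-separated family of Dirac types directly from Propositions~\ref{surjective} and~\ref{distanceontypes}. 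What this buys you is that you sidestep Corollary~\ref{isolatedntypes} and the principal-type characterization entirely; what you lose is the finer information those results provide (namely, a precise description of which types are principal). One minor point: if you use the literal language metric $d(v,w) = \|(v-w)/2\|$, the separation constant is $\sqrt{2}/2$ rather than $\sqrt{2}$, but this is still a uniform positive separation and the argument goes through unchanged.
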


\begin{proof}
    The theory $\Sigma_{K,m}$ is $\omega$-categorical if and only if every type $p(\vec x)\in S_n(\Sigma_{K,m})$ is principal; by Corollary \ref{isolatedntypes}, the latter condition is equivalent to the statement that  $K$ only has isolated points, which, since $K$ is compact, is equivalent to the condition that $K$ is finite.
\end{proof}

We offer a second proof of Corollary \ref{omega-cat} that is perhaps more elementary:



\begin{proof}[Alternative proof of Corollary \ref{omega-cat}]
    Assume first that $K$ is finite.  The separable model of $\Sigma_{K,m}$ is necessarily then isomorphic to $\bigoplus_{\lambda\in K}(H_\lambda,T_\lambda)$, where $\dim(H_\lambda)=m(\lambda)$ and $T_\lambda x=\lambda x$ for all $x\in H_\lambda$.  

     Assume now that $K$ is infinite and take a separable model $(H,T)$ of $\Sigma_{K,m}$.
     Since $K$ is compact and infinite, it has an accumulation point $\lambda_0$. Consider $H_{\lambda_0}$, the corresponding eigenspace in $(H,T)$. If $\dim(H_{\lambda_0})>0$, then the structure $(H',T')=(H,T)\ominus (H_0,T\upharpoonright_{H_0})$ is not isomorphic to $(H,T)$ (the eigenspaces corresponding to $\lambda_0$ have different dimension) but $(H',T')$ is still a model of $\Sigma_{K,m}$.  If $\dim(H_{\lambda_0})=0$, then by Upward L\"owenheim-Skolem, we can choose $(H,T)\preceq (H',T')$ separable with $\dim(H'_{\lambda_0})>0$; once again, $(H,T)$ and $(H',T')$ are not isomorphic since the eigenspaces corresponding to $\lambda_0$ have different dimension. 
\end{proof}

\begin{rem}
If $K$ is finite and $m(K)\subseteq \mathbb{N}$, then in fact $\Sigma_{K,m}$ has a unique model up to isomorphism.
\end{rem}

If we allow perturbations, we always have separable categoricity of $\Sigma_{K,m}$; in fact, that is precisely the model-theoretic reformulation of Fact \ref{fact:WNB}.  For more on perturbations in this context, see Definition \ref{def:pert} below.

\begin{prop}
The theory $\Sigma_{K,m}$ is $\omega$-categorical up to perturbations.
\end{prop}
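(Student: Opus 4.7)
The plan is to reduce $\omega$-categoricity up to perturbations directly to the Weyl-von Neumann-Berg theorem (Fact \ref{fact:WNB}). The perturbation framework of \cite{HiHy} measures, in our setting, how small a change to the interpretation of $T$ is needed to produce an isomorphism of structures; the operator-norm difference $\|T_1 - T_2\|$ is, up to a routine translation, precisely the relevant notion of perturbation for the unary symbol $T$ (and, since $T^*$ is definable, we need not worry about perturbing it separately).

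First, I would take two arbitrary separable models $(H_1,T_1)$ and $(H_2,T_2)$ of $\Sigma_{K,m}$. By Theorem \ref{Cor:spec-elem}, since they have the same theory, $T_1$ and $T_2$ are spectrally equivalent. Appealing to Fact \ref{fact:WNB}, the separability hypothesis now upgrades spectral equivalence to approximate unitary equivalence: there exists a sequence of unitaries $U_n : H_1 \to H_2$ such that $\|U_n T_1 U_n^* - T_2\| \to 0$.

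Now fix $\epsilon > 0$ and choose $n$ large enough that $\|U_n T_1 U_n^* - T_2\| < \epsilon$. The unitary $U_n$ is a bijective linear isometry from $H_1$ onto $H_2$, hence an isomorphism of Hilbert space structures, and by construction it intertwines $T_1$ with an operator $T_2' := U_n T_1 U_n^*$ on $H_2$ satisfying $\|T_2' - T_2\| < \epsilon$. Thus $U_n$ is an isomorphism $(H_1, T_1) \cong (H_2, T_2')$, and $(H_2, T_2')$ differs from $(H_2, T_2)$ only by an $\epsilon$-perturbation of the interpretation of the symbol $T$. Since $\epsilon$ was arbitrary, we conclude $d_{\pert}((H_1,T_1),(H_2,T_2)) = 0$, which is exactly the statement that $\Sigma_{K,m}$ is $\omega$-categorical up to perturbations.

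The main obstacle, such as it is, is purely bookkeeping: one must verify that the notion of perturbation in \cite{HiHy} applied to the unary function symbol $T$ is indeed controlled by the operator norm $\|T_1 - T_2\|$ (with uniform continuity moduli preserved, which is automatic since both operators have norm at most $1$). Once this translation is in hand, the proof is essentially a one-line appeal to Fact \ref{fact:WNB}.
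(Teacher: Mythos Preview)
Your proposal is correct and is exactly the argument the paper has in mind: the paper does not even write out a proof, stating only that the proposition ``is precisely the model-theoretic reformulation of Fact \ref{fact:WNB},'' and Definition \ref{def:pert} confirms that an $r$-perturbation is a unitary $U$ with $\|UT_0U^{-1}-T_1\|\leq r$, so your unpacking via Theorem \ref{Cor:spec-elem} and Fact \ref{fact:WNB} is precisely what is intended.
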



\begin{rem}
In general, the theory $\Sigma_{K,m}$ is not $\aleph_1$-categorical up to perturbations. For example, consider the case $K=\{0,1\}$ and $m(0)=m(1)=\infty$. There are three models of $\Sigma_{K,m}$ up to isomorphism of density character $\aleph_1$, namely: the ``rich'' one where $\dim(H_0)=\dim(H_1)=\aleph_1$, and two ``poor'' ones, the first one with $\dim(H_0)=\aleph_0$ and $\dim(H_1)=\aleph_1$, and the second one satisfying $\dim(H_0)=\aleph_1$ and $\dim(H_1)=\aleph_0$. This picture will not change if we consider the models up to perturbation. As long as we have at least two points in $K$ that are isolated with infinite multiplicity or at least two points that are accumulation points in $K$, we can do a similar construction and have at least three nonisomorphic models of the theory of density character $\aleph_1$, even allowing for perturbations.
\end{rem}







    

\section{Stability}\label{sec:stability}

Throughout this section, we fix a completion $\Sigma$ of $\Sigma_{\operatorname{normal}}$ and a sufficiently saturated model $(H,T)$ of $\Sigma$.  Our first goal is to give a functional analytic characterization of nonforking independence.

\begin{defn}\label{defn:indeprelation}
For algebraically closed $B\subseteq
 C\subseteq H$ and a tuple $\vec a=(a_1,\dots,a_n)\in H^n$, we write $\vec a\ind_{B}C$ if $P_{B}(a_i)=P_C(a_i)$ for each $i=1,\ldots,n$. For general $A,B,C\subseteq H$, we write $A\ind_{B}C$ if $a \ind_{\langle B \rangle}\langle B\cup C\rangle$ for every $a\in A$.
\end{defn}

\begin{theo}\label{theo:char-forking}
 $\Sigma$ is stable and $\ind$ agrees with non-forking independence.   \end{theo}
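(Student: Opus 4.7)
The plan is to invoke the uniqueness theorem for stable independence relations from continuous logic: checking that the relation $\ind$ of Definition \ref{defn:indeprelation} satisfies the axioms of a stable independence relation in the sense of \cite[Theorems 14.12 and 14.14]{BBHU} will yield at once that $\Sigma$ is stable and that $\ind$ coincides with non-forking independence. This is the strategy employed in \cite{ChPi,ArBe,BYUsZa,BeBu} for related expansions of Hilbert spaces; the only task is to check that the functional-analytic data of $(H,T)$ makes the axioms work.

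I would first exploit Proposition \ref{CharAlgCls} together with Corollary \ref{cor:Hfin-invariant}: whenever $B=\acl(B)$, the set $\langle B\rangle=B$ is a closed $T$-reducing subspace of $H$, so $H=B\oplus B^{\perp}$ is an orthogonal decomposition into $T$-reducing subspaces. Consequently, modulo algebraic closure, $\ind$ coincides with the classical Hilbert-space notion of independence (equality of projections) applied to reducing subspaces. From this, the ``soft'' axioms follow by standard arguments on orthogonal projections: \emph{invariance} under $\operatorname{Aut}(H,T)$ is clear from the canonical construction of $\langle B\rangle$; \emph{symmetry}, \emph{monotonicity}, and \emph{transitivity} follow from elementary facts about nested projections in Hilbert space; \emph{finite character} holds since the projection onto an increasing union of closed subspaces is the limit of the projections onto the pieces; and \emph{local character} follows from separability, since each $P_{\langle C\rangle}(a_i)$ lies in the closed span of $\{p(T,T^{*})c:p\in\mathbb C[X,Y],\ c\in C\}$, and thus in $\langle B_0\rangle$ for some countable $B_0\subseteq C$.

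The substantive content lies in \emph{stationarity over algebraically closed sets} and \emph{extension}, both of which pivot on the type description in Proposition \ref{typedescription}. For stationarity, suppose $B=\acl(B)$, $C\supseteq B$ with $C=\acl(C)$, and $\vec a,\vec a'\in H^n$ satisfy $\vec a\ind_B C$, $\vec a'\ind_B C$, and $\tp(\vec a/B)=\tp(\vec a'/B)$. Setting $c_i:=a_i-P_{B}(a_i)$ and $c_i':=a_i'-P_{B}(a_i')$, the independence hypothesis gives $P_{C}(a_i)=P_{B}(a_i)=P_{B}(a_i')=P_{C}(a_i')$ (the middle equality from the assumption on types), while Proposition \ref{typedescription} applied over $B$ also yields $\langle p(T,T^{*})c_i,c_j\rangle=\langle p(T,T^{*})c_i',c_j'\rangle$ for all $p\in\mathbb C[X,Y]$ and all $i,j$. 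Noting that $c_i=a_i-P_{C}(a_i)$ and similarly for $\vec a'$, a second application of Proposition \ref{typedescription}, this time over $C$, yields $\tp(\vec a/C)=\tp(\vec a'/C)$. For extension, given $\tp(\vec a/B)$ and $C\supseteq B$ inside the sufficiently saturated model, one realizes in $\langle C\rangle^{\perp}$ a tuple whose pairwise spectral measures match those of $\vec c=\vec a-P_{B}(\vec a)$; this is possible because $\langle C\rangle^{\perp}$ is $T$-reducing, $\sigma(T\upharpoonright \langle C\rangle^{\perp})\subseteq\sigma(T)$, and by saturation this restriction has infinite-dimensional eigenspaces at each non-finitely-isolated point of $\sigma(T)$---exactly the freedom used in the proofs of Theorem \ref{QE} and Proposition \ref{surjective}. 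Translating the chosen tuple by $P_{B}(\vec a)$ then yields a realization of $\tp(\vec a/B)$ independent from $C$ over $B$.

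The main obstacle will be the extension step: one must argue carefully that $\langle C\rangle^{\perp}$ in the saturated model is rich enough to realize the prescribed spectral data, keeping track of isolated points of $\sigma(T)$ whose finite-dimensional eigenspaces are already ``used up'' inside $\langle B\rangle\subseteq\langle C\rangle$. This is exactly the point at which the treatment of isolated points---and the convention that $\acl(B)$ contains $H_{\fin,T}$---becomes essential, and it is the technical twist alluded to in the introduction. Once all axioms have been verified, \cite[Theorem 14.14]{BBHU} concludes that $\Sigma$ is stable and that $\ind$ coincides with non-forking independence.
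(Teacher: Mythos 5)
Your overall strategy is exactly the paper's: verify that $\ind$ satisfies the axioms of a stable independence relation and then invoke \cite[Theorem 14.14]{BBHU}. The stationarity, local character, and soft-axiom steps match the paper nearly verbatim. Two details diverge. First, you dismiss symmetry as following ``from elementary facts about nested projections,'' but the relation in Definition \ref{defn:indeprelation} is \emph{defined} asymmetrically, and showing that $a-P_{\langle B\rangle}(a)\perp\langle B\cup C\rangle$ for all $a\in A$ is equivalent to $c-P_{\langle B\rangle}(c)\perp\langle B\cup A\rangle$ for all $c\in C$ takes a genuine (if elementary) computation in which one pushes polynomials $p(T,T^{*})$ across the inner product using the fact that $\langle B\rangle$ is reducing; the paper spells this out. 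Second, for existence the paper avoids the richness question you correctly identify as the crux: rather than try to realize the residual spectral data inside $\langle C\rangle^{\perp}$ of the monster (which requires a saturation argument tracking which eigenspaces have been ``used up'' and why the cross-measures $\mu_{c_i,c_j}$ can still be matched), it simply passes to the elementary extension $(H,T)\oplus(D',T')$ where $D'$ is a \emph{copy} of $B^{\perp}$, places the shifted tuple in $D'$ by fiat, and then pulls back to $H$ by saturation. Your version is not wrong, but it is strictly harder to verify, and the paper's direct-sum trick buys you a clean one-paragraph existence proof that sidesteps the technical twist you flagged.
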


\begin{proof}
We verify the usual list of axioms that guarantee a theory is stable and that the abstract independence relation characterizes nonforking independence.  Clearly, the notion of independence satisfies transitivity and finite character.

Stationarity: Consider tuples $\vec a=(a_1,\dots,a_n),\vec a'=(a_1',\dots,a_n')\in H^n$ and sets $B\subseteq C\subseteq H$ with $B$ algebraically closed.  Furthermore suppose that $\tp(\vec a/B)=\tp(\vec a'/B)$ and that $\vec a\ind_BC$, $\vec a'\ind_BC$.  For each $i=1,\ldots, n$, write $a_i=P_{B}(a_i)+c_i$ and
$a_i'=P_{B}(a_i)+c_i'$ with $c_i,c_i'\perp B$. Note that $\tp(c_1,\dots,c_n/\emptyset)=\tp(c_1',\dots,c_n'/\emptyset)$. Since $\vec a\ind_BC$ and $\vec a'\ind_BC$, we have $c_i,c_i'\perp \langle B\cup C\rangle$.  It follows that $\tp(\vec a/B\cup C)=\tp(\vec a'/B\cup C)$.

Symmetry: Consider $\vec a=(a_1,\dots,a_n)\in H^n$ and $\vec c=(c_1,\dots,c_k)\in H^k$ and assume that $\vec a\ind_B \vec c$. For any $j=1,\ldots, n$, any polynomials $p_1(T,T^*),\dots, p_k(T,T^*)$, and any $b\in \langle B\rangle$, we have $$a_j-P_{\langle B\rangle}(a_j)\perp p_1(T,T^*)c_1+\dots+p_k(T,T^*)c_{k}+b.$$ Since the family of polynomials is closed under applying $T$ and $T^*$ and $B$ is also invariant under $T$ and $T^*$, we also have, for 
any $s\geq 1$ and any $b\in B$, that
$$T^s(a_j)-P_{\langle B\rangle}(T^{s}a_j)\perp p_1(T,T^*)c_1+\dots+p_k(T,T^*)c_{k}+b,$$
$$(T^*)^s(a_j)-P_{\langle B\rangle}((T^*{s})a_j)\perp p_1(T,T^*)c_1+\dots+p_k(T,T^*)c_{k}+b.$$
Thus for any polynomial $t_j(T,T^*)$ and any $b\in B$, we also have $$t_j(T,T^*)a_j-P_{\langle B\rangle}(t_j(T,T^*)a_j)\perp p_1(T,T^*)c_1+\dots+p_\ell(T,T^*)c_{k}+b$$ and thus $$\sum_{j=1}^n(t_j(T,T^*)a_j-P_{\langle B\rangle}(t_j(T,T^*)a_j))\perp p_1(T,T^*)c_1+\dots+p_\ell(T,T^*)c_{\ell}+b.$$ In particular, for any $i=1,\ldots,k$ and any $b\in B$ we get 
$$c_i,P_{\langle B\rangle}(c_i),b \perp \sum _j (t_j(T,T^*)a_j-P_{\langle B\rangle}(t_j(T,T^*)a_j))$$ and thus $c_i-P_{\langle B\rangle}(c_i) \perp \sum _j t_j(T,T^*)a_j+b$. This shows that for any $i=1,\ldots,k$ we have $P_{\langle B \cup\{a_j \ : \ j=1,\ldots, n\}\rangle }(c_i)=P_{\langle B \rangle}(c_i)$ and thus $\vec c\ind_B \vec a$.

Existence: Let $B\subseteq C \subseteq H$ and let $\vec a\in H^n$. We will show that $\tp(\vec a/B)$ has a free extension to $C$ in an elementary extension of $(H,T)$ (and thus in $(H,T)$ by saturation).
Without loss of generality, we may assume that $B$ is algebraically closed. Set $D:=B^\perp$, so $D$ is a closed subspace of $H$ invariant under $T$, $T^*$ and it is orthogonal to $H_{\fin,T}$.
Let $(D',T')$ be a copy of $(D,T\upharpoonright_{D})$.
Then $(H,T)\oplus (D',T')$ is an elementary extension of $(H,T)$.  Write $a_i=b_i+d_i$, where $b_i=P_{B}(a_i)$ and $d_i=P_{D}(a_i)$. Let $d_i'$ be the copy of $d_i$ in the space $D'$
and let $a_i'=b_i+d_i'$. Then $\tp(a_1',\dots,a_n'/B)=\tp(a_1,\dots,a_n/B)$ and for each $i=1,\ldots, n$, we have $P_{H}(a_i')=P_{B}(a_i')$.  In particular, $P_{\langle C\rangle }(a_i')=P_{B}(a_i')$ for each $i=1,\ldots,n$, and so $\vec a'\ind_B C$, as desired.

Local character: Fix $B\subseteq H$ and $\vec a=(a_1,\ldots,a_n)\in H^n$. If we set $D=\{P_{\langle B\rangle}(a_1),\dots,P_{\langle B\rangle}(a_n)\}$, then by the definition of independence, we have $\vec a\ind_{D} B$. Since $D\subseteq \langle B\rangle$ and $D$ is finite, there is $B_0\subseteq B$ countable with $D\subseteq \langle B_0\rangle $. It follows that $\vec a\ind_{B_0} B$.
\end{proof}


\begin{theo}\label{prop:superstable}
$\Sigma$ is superstable.
\end{theo}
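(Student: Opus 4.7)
The plan is to prove superstability by establishing the appropriate form of local character for the nonforking independence $\ind$ characterized in Theorem \ref{theo:char-forking}. The key Hilbert-space observation is that increasing sequences of closed-subspace projections applied to a fixed vector have square-summable increments, so they converge in norm and all but finitely many increments are small.

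Concretely, let $\vec a=(a_1,\ldots,a_n)$ be a tuple and $B_0 \subseteq B_1 \subseteq \cdots$ an increasing chain of subsets of $H$ with union $B$. The closed subspaces $\langle B_0 \rangle \subseteq \langle B_1 \rangle \subseteq \cdots \subseteq \langle B \rangle$ are nested, so for each fixed $i \leq n$ the increments $\Delta_k^i := P_{\langle B_{k+1}\rangle}(a_i) - P_{\langle B_k \rangle}(a_i)$ are mutually orthogonal in $H$ and satisfy $\sum_k \|\Delta_k^i\|^2 \leq \|a_i\|^2$. In particular, for every $\epsilon>0$ only finitely many $k$ can satisfy $\|\Delta_k^i\| \geq \epsilon$ for some $i \leq n$, and $\|P_{\langle B \rangle}(a_i) - P_{\langle B_k \rangle}(a_i)\| \to 0$ as $k\to \infty$ for each $i$.

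By Theorem \ref{theo:char-forking}, this will mean that for every $\epsilon>0$ there is some $k$ such that the nonforking extension of $\tp(\vec a/B_k)$ to $B$ is within $\epsilon$ of $\tp(\vec a/B)$ in the metric: their projections onto $\langle B \rangle$ differ by at most $\epsilon$, while the perpendicular components can be chosen to match by stationarity. This $\epsilon$-approximate finite local character along increasing chains is exactly the form of superstability in the continuous-logic framework (see \cite{BYUs}).

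An alternative route is a direct cardinality count. Proposition \ref{typedescription} realizes $S_n(B)$ as a subset of $\langle B\rangle_0^n \times S_n(\Sigma)$; the first factor has density character at most the density character of $B$ times $\aleph_0$, and Proposition \ref{homeo} together with the fact that every regular Borel measure on $K$ is determined by its values on a countable basis of open sets bounds the cardinality of $S_n(\Sigma)$ by $|M(K)^{n^2}| \leq 2^{\aleph_0}$. Hence $\Sigma$ is $\lambda$-stable whenever $\lambda \geq 2^{\aleph_0}$, which is again superstability. The only real delicacy is calibrating what superstable means in continuous logic and, in the counting variant, passing from weak*-cardinality bounds to bounds on metric density character; once Theorem \ref{theo:char-forking} and Propositions \ref{typedescription}, \ref{homeo} are available, the orthogonality argument finishes the proof with no further model-theoretic input.
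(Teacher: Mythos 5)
Your first route, via the square-summability of the increments $P_{\langle B_{k+1}\rangle}(a) - P_{\langle B_k\rangle}(a)$ along a chain of subspaces, is essentially the paper's argument in different clothing. The paper works directly: since $\langle B\rangle$ is the closed span of $H_{\fin,T}$ together with $\{p(T,T^*)b : b \in B_0\}$, the projection $P_{\langle B\rangle}(a)$ is a norm-limit of finite sums $h + \sum_{j=1}^n p_j(T,T^*)b_j$ with $h\in H_{\fin,T}$ and $b_j$ in the parameter set, and one reads off $B_\epsilon := \{b_1,\ldots,b_n\}$ and $a' := (h + \sum_j p_j(T,T^*)b_j) + (a - P_{\langle B\rangle}(a))$, which satisfies $d(a,a')<\epsilon$ and $a' \ind_{B_\epsilon} B$ since $P_{\langle B_\epsilon\rangle}(a') = P_{\langle B\rangle}(a')$. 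Two minor fixes to your writeup: (i) rather than asserting that the nonforking extension of $\tp(\vec a/B_k)$ to $B$ is metrically close to $\tp(\vec a/B)$, perturb the realization itself by setting $a_i'' := a_i - \bigl(P_{\langle B\rangle}(a_i) - P_{\langle B_k\rangle}(a_i)\bigr)$; then $d(a_i,a_i'')<\epsilon$ and, since the correction term is orthogonal to $\langle B_k\rangle$, one has $P_{\langle B_k\rangle}(a_i'') = P_{\langle B\rangle}(a_i'')$, so $a_i'' \ind_{B_k} B$ by Theorem \ref{theo:char-forking}; (ii) for uncountable $B$ an increasing $\omega$-chain of finite subsets cannot exhaust $B$, so one must first pass to a countable $B'\subseteq B$ with $P_{\langle B'\rangle}(a_i) = P_{\langle B\rangle}(a_i)$ for each $i$ (automatic, since each projection is a norm-limit of finitely supported combinations) before running the chain argument.

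Your second route via a density count is a genuinely different argument, and it does work once the cardinality-to-density-character delicacy you flag is handled. The conversion uses the product-metric decomposition from the Corollary following Proposition \ref{distanceontypes}, namely $d(p,q)^2 = \|P_{B}(a)-P_{B}(b)\|^2 + d(p_B,q_B)^2$: this shows the density character of $S_n(B)$ is bounded by the maximum of the density character of $\langle B\rangle_0^n$ (at most that of $B$) and the density character of $S_n(\Sigma)$ (at most its cardinality, so at most $2^{\aleph_0}$ by the measure identification in Section \ref{types-categ}). Hence $\Sigma$ is $\lambda$-stable for all $\lambda\geq 2^{\aleph_0}$, which gives superstability. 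This route avoids using the explicit generation of $\langle B\rangle$ by polynomials and eigenspaces, trading it for the measure-theoretic description of $S_n(\Sigma)$; the paper's argument is more elementary in that it needs only Theorem \ref{theo:char-forking} and the structure of $\langle B\rangle$, not the spectral identification of types.
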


\begin{proof}
Fix $B_0\subseteq H$ and set $B=\langle B_0 \rangle$. 
By the definition of independence, it suffices to prove that for any single element $a\in H$ and any $\epsilon>0$, there are a finite $B_\epsilon\subseteq B_0$ and $a'\in H$ such that $d(a,a')<\epsilon$ and $a'\ind_{B_\epsilon}B$.
Write $a=P_B(a)+a_1$, where $a_1\in B^\perp$. By Theorem \ref{theo:char-forking}, we have $a\ind_{P_B(a)} B$.

Set $h=P_{H_{\fin,T}}(a)$. Since $P_{B}(x)\in B$, given $\epsilon>0$, we can find $n\geq 0$ and a sum $p_1(T,T^*)(b_1)+\dots+p_n(T,T^*)(b_n)$, where $b_1,\dots,b_n\in B_0$, such that $$\|P_B(x)-(h+p_1(T,T^*)(b_1)+\dots+p_n(T,T^*)(b_n))\|<\epsilon.$$ Set $B_\epsilon=\{a_1,\dots,a_n\}$ and $a'=(h+p_1(T,T^*)(b_1)+\dots+p_n(T,T^*)(b_n))+a_1$. Then $d(a,a')=d(P_{B}(a),P_{B}(a'))<\epsilon$ and $a' \ind_{B_{\epsilon}} B$.
\end{proof}

\begin{prop}
$\Sigma$ is not multidimensional.
\end{prop}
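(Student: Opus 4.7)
The plan is to verify the standard definition of non-multidimensionality for the superstable theory $\Sigma$: every complete non-algebraic type is non-orthogonal to a type over $\emptyset$. I would fix an algebraically closed $B \subseteq H$ and a non-algebraic $p \in S_n(B)$ realized by $\vec a = (a_1,\ldots,a_n) \in H^n$, then construct the natural witness by orthogonal decomposition: set $c_i := a_i - P_{\langle B \rangle}(a_i)$, so $\vec c := (c_1,\ldots,c_n)$ lies componentwise in $\langle B \rangle^\perp$. Since $\langle \emptyset \rangle = H_{\fin,T} \subseteq \langle B \rangle$, both projections $P_{\langle \emptyset \rangle}(c_i)$ and $P_{\langle B \rangle}(c_i)$ vanish, so $\vec c \ind_\emptyset B$ by Definition \ref{defn:indeprelation} and Theorem \ref{theo:char-forking}. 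Setting $q := \tp(\vec c/\emptyset) \in S_n(\emptyset)$, this shows that $\tp(\vec c/B)$ is the (unique, by stationarity) non-forking extension of $q$ to $B$.

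Next I would establish $p \not\perp q$ by producing realizations of $p$ and of the non-forking extension $\tp(\vec c/B)$ that are dependent over $B$. The tuples $\vec a$ and $\vec c$ already chosen serve as witnesses. Because $a_i = P_{\langle B \rangle}(a_i) + c_i$ lies in $\langle B \cup \vec c \rangle$, we have $P_{\langle B \cup \vec c \rangle}(a_i) = a_i$, whereas $P_{\langle B \rangle}(a_i) = a_i - c_i$. Since $p$ is non-algebraic and $\acl(B) = \langle B \rangle$ by Proposition \ref{CharAlgCls}, some coordinate satisfies $c_j \neq 0$, so
\[
P_{\langle B \cup \vec c \rangle}(a_j) = a_j \neq a_j - c_j = P_{\langle B \rangle}(a_j),
\]
and hence $\vec a \not\ind_B \vec c$ by Definition \ref{defn:indeprelation}. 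This dependency at $C := B$, between realizations of the trivially non-forking extension $p' = p$ and of $q' = \tp(\vec c/B)$, is the sought witness of non-orthogonality of $p$ and $q$.

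The main obstacle is a bookkeeping one: one must carefully distinguish the two closure operations appearing in the paper, namely $\langle B \rangle = \acl(B)$ (which contains $H_{\fin,T}$) and $\langle B \rangle_0 = \dcl(B)$, and consistently use the former in the forking calculation so that $\vec c \ind_\emptyset B$ actually holds. Once this distinction is made, the non-orthogonality witness is essentially immediate from the projection identity $a_i = P_{\langle B \rangle}(a_i) + c_i$ together with the characterization of $\ind$ via equality of projections. The measure-theoretic viewpoint of Section \ref{types-categ} offers an alternative intuition: the type $q$ is precisely the $\emptyset$-type whose spectral measure agrees with that of the ``orthogonal part'' of $\vec a$ over $B$, so $q$ is canonically determined by $p$.
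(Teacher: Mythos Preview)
Your proof is correct and follows essentially the same route as the paper: decompose $a_i = P_{\langle B\rangle}(a_i) + c_i$, observe that $\vec c \ind_\emptyset B$ because both relevant projections vanish, and witness non-orthogonality via $\vec a \nind_B \vec c$. Your write-up is in fact more careful than the paper's, explicitly handling the non-algebraicity hypothesis needed to ensure some $c_j \neq 0$ and spelling out the comparison $P_{\langle B \cup \vec c\rangle}(a_j) = a_j \neq P_{\langle B\rangle}(a_j)$, which the paper leaves implicit.
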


\begin{proof}
    Fix $B\subseteq H$ and $\vec a=(a_1,\dots,a_n)\in H^n$. Without loss of generality, we may assume that $B=\langle B\rangle$. For each $i=1,\ldots, n$ we can write $a_i=a_{iB}+c_i$, where $a_{iB}:=P_{B}(a_i)$. Set $\vec c=(c_1,\dots,c_n)$ and note that by the characterization of independence, $\tp(c_1,\dots,c_n/B)$ is a non-forking extension of $\tp(c_1,\dots,c_n/\emptyset)$ and that $\vec a\nind_{B} \vec c$. Consequently, $\tp(\vec a/B)$ is not weakly orthogonal to $\tp(\vec c/B)$ and thus $\tp(\vec a/B)$ is not orthogonal to $\tp(\vec c/\emptyset)$.
\end{proof}

\begin{prop}
 Let $B\subseteq H$ be algebraically closed and let $\vec a=(a_1,\dots,a_n)\in H^n$. Then $\operatorname{Cb}(\vec a/B)$ is interdefinable with the set of elements $C=\{P_B(a_i)\}_{i\leq n}$.
\end{prop}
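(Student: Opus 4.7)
The plan is to verify that $C = \{P_B(a_i)\}_{i \le n}$ satisfies the two defining properties of a canonical base (up to interdefinability in $\mathcal{U}^{\mathrm{eq}}$): (i) $\tp(\vec a / B)$ does not fork over $C$, and (ii) each element of $C$ is $\dcl^{\mathrm{eq}}$-definable from $\operatorname{Cb}(\vec a / B)$.

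For (i), I would verify $\vec a \ind_C B$ directly from Definition \ref{defn:indeprelation}. Because $B$ is algebraically closed, each $P_B(a_i)$ lies in $\langle B\rangle = B$, and hence $\langle C\rangle \subseteq B$. On the other hand $a_i - P_B(a_i) \perp B$, so in particular $a_i - P_B(a_i) \perp \langle C\rangle$; since $P_B(a_i) \in \langle C\rangle$, we deduce $P_{\langle C\rangle}(a_i) = P_B(a_i) = P_{\langle B\rangle}(a_i)$ for every $i$, which is exactly the defining condition of $\vec a \ind_C B$. By stationarity (proved in Theorem \ref{theo:char-forking} over algebraically closed parameter sets), $\tp(\vec a / B)$ is then the unique non-forking extension of $\tp(\vec a / \acl(C))$, so $\operatorname{Cb}(\vec a / B) \subseteq \dcl^{\mathrm{eq}}(\acl(C))$.

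For (ii), the content is that each $P_B(a_i)$ is itself a canonical invariant of $\tp(\vec a / B)$. The cleanest route is via automorphisms: if $\sigma$ is an automorphism of a sufficiently saturated monster model that fixes $\operatorname{Cb}(\vec a / B)$ pointwise, then $\sigma$ preserves the global non-forking extension of $\tp(\vec a / B)$. By Proposition \ref{typedescription}, this type is determined by the projections $P_B(a_i)$ together with the joint spectral data of the residuals $a_i - P_B(a_i)$; the former component is rigidly pinned down by the orthogonal decomposition of $\vec a$ against $B$, so $\sigma$-invariance of the type forces $\sigma(P_B(a_i)) = P_B(a_i)$ for each $i$. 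Hence $C \subseteq \dcl^{\mathrm{eq}}(\operatorname{Cb}(\vec a / B))$, which combined with (i) gives the interdefinability.

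The main obstacle is the clean formalization of (ii): one must argue that the assignment $a \mapsto P_B(a)$ is genuinely determined by, and not merely compatible with, $\tp(\vec a/B)$ in an automorphism-invariant way. A smoother alternative would be to invoke the abstract characterization of $\operatorname{Cb}$ as the $\dcl^{\mathrm{eq}}$-minimal parameter set over which the type is stationary and non-forking: part (i) then identifies $\operatorname{Cb}(\vec a / B)$ with $\dcl^{\mathrm{eq}}(C)$ once one checks that no proper $\dcl^{\mathrm{eq}}$-closed subset $D \subsetneq \dcl^{\mathrm{eq}}(C)$ can already witness non-forking, because such a $D$ would necessarily miss the canonical parameter of at least one $P_B(a_i)$ and then $\tp(\vec a / \acl(D))$ would have several non-forking extensions distinguished by the value of that projection, contradicting stationarity.
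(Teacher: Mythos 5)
Part (i) of your proposal matches the paper: you verify $\vec a \ind_C B$ directly from the definition (using that $B$ is algebraically closed so $P_B(a_i) \in B$), and invoke stationarity over $C$ to conclude $\operatorname{Cb}(\vec a/B) \subseteq \dcl^{\mathrm{eq}}(C)$. That part is fine.

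Part (ii) has a genuine gap, which you partly acknowledge. The assertion that ``$\sigma$-invariance of the type forces $\sigma(P_B(a_i)) = P_B(a_i)$'' does not follow from what you have written: an automorphism $\sigma$ fixing $\operatorname{Cb}(\vec a/B)$ pointwise need not fix $B$, nor $\vec a$, so nothing is ``rigidly pinned down.'' One has $\sigma(P_B(a_i)) = P_{\sigma(B)}(\sigma(a_i))$, and both $\sigma(B)$ and $\sigma(a_i)$ have moved; the orthogonal decomposition of $\vec a$ against $B$ is carried to the decomposition of $\sigma(\vec a)$ against $\sigma(B)$, not fixed. Your ``smoother alternative'' via minimality has the same unformalized jump: you assert that a proper $D \subsetneq \dcl^{\mathrm{eq}}(C)$ would yield several non-forking extensions of $\tp(\vec a/\acl(D))$ distinguished by the missing projection, but no such extensions are produced, and it is not clear why the automorphism moving $P_B(a_i)$ over $D$ would preserve non-forking of the extension to $B$.

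The paper closes the gap constructively. Take a Morley sequence $(\vec a^k)$ in $\tp(\vec a/B)$ with $\vec a^0 = \vec a$. Since all $\vec a^k$ realize the same type over $B$, $P_B(a_i^k) = P_B(a_i)$ for every $k$, and by the independence characterization the residuals $a_i^k - P_B(a_i^k)$ form a bounded orthogonal sequence, so their Cesàro averages tend to $0$. Hence
\[
\frac{1}{m}\sum_{k=0}^{m-1} a_i^k \;\longrightarrow\; P_B(a_i),
\]
realizing $P_B(a_i)$ as a canonical ($\emptyset$-definable, tail-invariant) limit of the Morley sequence, which by the standard stable-theoretic fact places $P_B(a_i)$ in $\dcl^{\mathrm{eq}}(\operatorname{Cb}(\vec a/B))$. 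If you prefer to push through the automorphism route, it can be made to work, but requires the concrete Hilbert-space computation you are missing: from $\sigma(\hat p) = \hat p$ (where $\hat p$ is the global non-forking extension) and a realization $\vec a' \models \hat p$ one derives, via independence over $B \cup \sigma(B)$, that $\sigma(P_B(a_i)) = P_{\sigma(B)}(P_B(a_i))$; comparing norms ($\sigma$ is unitary, so the left side has norm $\|P_B(a_i)\|$, while a projection cannot increase norm) forces $P_B(a_i) \in \sigma(B)$ and hence $\sigma(P_B(a_i)) = P_B(a_i)$.
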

\begin{proof}
    By Theorem \ref{theo:char-forking}, we have that $\vec a \ind_C B$, and since $\tp(\vec a/C)$ is stationary (see the proof of stationarity in Theorem \ref{theo:char-forking}, noting that the same proof works in the current setting even though $C$ is not algebraically closed), we obtain $\operatorname{Cb}(\vec a/B)=\operatorname{Cb}(\vec a/C)\subseteq \dcl^{\operatorname{eq}}(C)$.

    For the converse, it suffices to show that, for every $i=1,\ldots,n$, the projection $P_{B}(a_i)$ belongs to the definable closure of any Morley sequence in $\operatorname{tp}(\vec a/B)$. Towards this end, let $(\vec a^k)_{k\in\mathbb N}=(a_1^k,\dots,a_n^k)_{k\in\mathbb N}$ be a Morley sequence with $\vec a^0=\vec a$. Then $P_{B}(a_i^k)=P_{B}(a_i^0)$ for all $i\leq n$, so $\{a_i^k-P_{B}(a_i^0)\}_{n\in\mathbb N}$ forms an orthogonal sequence of bounded norm and converges, in the sense of Ces\`aro, to $0$. Thus, for any $i=1,\ldots,n$, we have
     $$\sum_{k=0}^{m-1}\dfrac{a_i^k}{m}=P_{B}(a_i^0)+\sum_{k=0}^{m-1}\dfrac{a_i^k-P_{B}(a_i^k)}{m}\to P_{B}(a_i^0),$$
     yielding the desired result.
\end{proof}

\begin{cor}
The theory $\Sigma$ has weak elimination of imaginaries.  
\end{cor}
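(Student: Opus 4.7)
My plan is to invoke the standard criterion for weak elimination of imaginaries in stable theories: it suffices to show that for every finite real tuple $\vec a$ and every algebraically closed set $B \subseteq H$, the canonical base $\operatorname{Cb}(\vec a/B)$ is interalgebraic (in $H^{\operatorname{eq}}$) with a tuple from the home sort. Since the previous proposition already gives the stronger statement that $\operatorname{Cb}(\vec a/B)$ is interdefinable with the finite \emph{set} $C = \{P_B(a_i)\}_{i \le n}$, the work reduces to unpacking the difference between ``interdefinable with the set'' and ``interalgebraic with an enumerating tuple''.

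First, I would interpret the previous proposition as asserting interdefinability (in $H^{\operatorname{eq}}$) of $\operatorname{Cb}(\vec a/B)$ with the canonical code $\lceil C \rceil$ of the finite set $C$. Next, I would observe that $\lceil C \rceil$ is interalgebraic with any enumeration of $C$: from any enumeration $(c_1,\dots,c_n)$ the code is recovered by definable closure, while from the code each element of $C$ is algebraic (it satisfies the formula ``$x \in C$'', which has at most $n$ solutions). In particular, the real tuple $\vec c := (P_B(a_1),\dots,P_B(a_n))$ satisfies $\lceil C \rceil \in \operatorname{dcl}^{\operatorname{eq}}(\vec c)$ and $\vec c \in \operatorname{acl}^{\operatorname{eq}}(\lceil C \rceil)$. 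Chaining this with the interdefinability from the previous proposition gives $\operatorname{Cb}(\vec a/B) \in \operatorname{dcl}^{\operatorname{eq}}(\vec c)$ and $\vec c \in \operatorname{acl}^{\operatorname{eq}}(\operatorname{Cb}(\vec a/B))$.

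Finally, I would cite the standard fact for stable theories that this property of canonical bases implies that every imaginary $e$ is interalgebraic with a real tuple (the argument being that any $e$ is interdefinable with the canonical base of a Morley sequence over $\operatorname{acl}^{\operatorname{eq}}(e)$ in some stationary type, and interalgebraicity of canonical bases with real tuples transfers to $e$). There is no serious obstacle: the previous proposition does almost all the work, and the only potential wrinkle is the passage between ``set'' and ``enumeration'', which is handled by the elementary algebraicity observation above. Accordingly, the argument amounts to a short citation of the criterion combined with the elementary remark on finite sets.
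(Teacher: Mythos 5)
Your approach is the standard one and matches what the authors evidently intend: the Corollary is stated without proof, immediately following the Proposition on canonical bases, and the implicit argument is exactly the classical criterion in stable theories that weak elimination of imaginaries reduces to showing every canonical base $\operatorname{Cb}(\vec a/B)$ has a ``weak real code'' (a real tuple $\vec c$ with $\operatorname{Cb}(\vec a/B)\subseteq\dcl^{\operatorname{eq}}(\vec c)$ and $\vec c\in\operatorname{acl}^{\operatorname{eq}}(\operatorname{Cb}(\vec a/B))$). Your set-versus-enumeration remark correctly handles the one potential wrinkle in passing from the Proposition's statement to that criterion.

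One small observation: the set/enumeration step you carefully spell out is in fact already built into the Proposition's proof. The converse direction of that proof shows each individual projection $P_B(a_i)$ lies in the definable closure of any Morley sequence in $\tp(\vec a/B)$ and hence in $\dcl^{\operatorname{eq}}(\operatorname{Cb}(\vec a/B))$, so $\operatorname{Cb}(\vec a/B)$ is interdefinable with the real \emph{tuple} $(P_B(a_1),\dots,P_B(a_n))$, not merely with the code of the unordered set. Passing through $\lceil C\rceil$ is therefore not strictly necessary, though of course it does no harm and the acl/dcl bookkeeping you give is exactly what one needs to check. Overall, your argument is correct and is essentially the same route the paper implicitly takes.

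Given you are working blind, you should be aware that the precise stable-theory lemma you cite in the final step needs to be quoted carefully: the version with canonical bases merely \emph{interalgebraic} over the real sort yields geometric elimination of imaginaries, whereas weak elimination needs the asymmetric version (dcl in one direction, acl in the other), which you do in fact have here because the Proposition delivers $\operatorname{Cb}\subseteq\dcl^{\operatorname{eq}}(\vec c)$ rather than merely $\operatorname{acl}^{\operatorname{eq}}$. Your write-up slightly blurs this in the sentence about interalgebraicity, but since you have the stronger dcl inclusion available, the conclusion goes through.
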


In what follows, the cardinality of $K$ will be relevant, so we go back to our standard notion and talk again about theories of the form $\Sigma_{K,m}$ instead of just $\Sigma$.

\begin{theo}\label{prop:omega-stability}
$\Sigma_{K,m}$ is $\omega$-stable if and only if $K$ is countable.
\end{theo}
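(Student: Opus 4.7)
The plan is to exploit the explicit metric description of the type space $S_1(\Sigma_{K,m})$ given by Proposition \ref{distanceontypes}, together with its parametrized refinement in the corollary following it, to reduce $\omega$-stability to the metric separability of $S_1(\emptyset)$.

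Suppose first that $K$ is countable. The reverse triangle inequality in $\ell^2$ yields, for nonnegative reals $a_i,b_i$ with convergent sums, the bound $\bigl(\sqrt{\sum a_i}-\sqrt{\sum b_i}\bigr)^2 \leq \sum(\sqrt{a_i}-\sqrt{b_i})^2$. Applied to a measurable partition of $K$ and its refinement into the singletons of the (countable) support of $\mu_p+\mu_q$, this shows that the supremum in Proposition \ref{distanceontypes} is attained by the atomic partition, recovering the formula from the remark immediately after that proposition: $d(p,q)^2=\sum_{\lambda\in K}|\sqrt{\mu_p(\{\lambda\})}-\sqrt{\mu_q(\{\lambda\})}|^2$. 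Consequently, the map $p\mapsto(\sqrt{\mu_p(\{\lambda\})})_{\lambda\in K}$ embeds the unit sphere of $S_1(\emptyset)$ isometrically into the separable Hilbert space $\ell^2(K)$. For a separable algebraically closed parameter set $B\subseteq H$, the corollary following Proposition \ref{distanceontypes} shows that $(S_1(B),d)$ embeds isometrically into the separable product $\langle B\rangle_0\times(S_1(\emptyset),d)$, so $S_1(B)$ is metrically separable. The $n$-type case follows analogously via Proposition \ref{typedescription} together with Lemma \ref{rem:abs-cont}: each off-diagonal measure $\mu_{c_i,c_j}$ is absolutely continuous with respect to the purely atomic diagonal measure $\mu_{c_i}$, so it is itself supported on a countable set, keeping the space of admissible type-data separable. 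This establishes $\omega$-stability.

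For the converse, suppose that $K$ is uncountable and let $(H,T)\models\Sigma_{K,m}$ be sufficiently saturated. For each $\lambda\in K$, saturation together with the fact that every spectral value of a normal operator is an approximate eigenvalue yields an actual unit eigenvector $x_\lambda\in H$ with $Tx_\lambda=\lambda x_\lambda$; functional calculus identifies the associated spectral measure as the Dirac mass $\delta_\lambda$. For distinct $\lambda,\lambda'\in K$, applying Proposition \ref{distanceontypes} to the partition $\{\{\lambda\},K\setminus\{\lambda\}\}$ gives $d(\tp(x_\lambda),\tp(x_{\lambda'}))^2\geq 2$. Hence $\{\tp(x_\lambda):\lambda\in K\}$ is an uncountable $\sqrt{2}$-separated subset of $S_1(\emptyset)$, which contradicts $\omega$-stability.

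The main conceptual step is recognizing that countability of $K$ collapses the supremum of Proposition \ref{distanceontypes} to the atomic partition, which in turn embeds $S_1(\emptyset)$ isometrically into a separable Hilbert space; once this is in hand, the backward direction is essentially immediate, and only the routine matter of extending from $1$-types to $n$-types remains.
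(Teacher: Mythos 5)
Your proof is correct, and the direction ``$K$ countable implies $\omega$-stable'' takes a genuinely different route from the paper. You exploit the explicit metric formula of Proposition \ref{distanceontypes}: using the $\ell^2$ reverse triangle inequality to show that refining a measurable partition can only increase the quantity $\sum_n|\sqrt{\mu_p(A_n)}-\sqrt{\mu_q(A_n)}|^2$, you conclude that the supremum is attained at the atomic partition and hence that $p\mapsto(\sqrt{\mu_p(\{\lambda\})})_{\lambda\in K}$ embeds the unit-sphere part of $S_1(\Sigma)$ isometrically into the separable space $\ell^2(K)$; the parametrized case then reduces to this via the corollary following Proposition \ref{distanceontypes}. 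The paper instead works directly in the Hilbert space: it invokes Lemma \ref{lem:countable} to write $H=\bigoplus_\lambda H_\lambda$, observes that finite-support vectors are dense, and splits an element into its $\langle B\rangle$-component and orthogonal component. Both arguments are sound; yours trades the approximation-by-finite-support argument for a slick identification of the metric, at the modest cost of having the $n$-type case rest on a one-sentence appeal to Lemma \ref{rem:abs-cont} (which is adequate, since in a stable theory separability of $(S_1(B),d)$ for countable $B$ is what is needed, and the paper's own proof likewise only treats $1$-types explicitly). For the converse, you and the paper both produce an uncountable $\sqrt{2}$-separated family of eigenvector types; the only cosmetic difference is that the paper restricts to a perfect subset of $K$ and remarks that the resulting types are non-algebraic, whereas you simply observe that $K$ being uncountable already gives an uncountable separated set, which is all that separability failure requires.
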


\begin{proof}
Assume first that $K$ is uncountable. Since $K$ is a closed uncountable subset of $\mathbb{C}$, it contains a nonempty perfect set $P$. For every $\lambda\in P$, consider the type $p_{\lambda}(x)$ over the emptyset determined by $\|x\|=1$ and $Tx=\lambda x$. Since $\lambda$ is not an isolated point of $K$, $p_\lambda$ is a non-algebraic type.  In addition, for distinct $\lambda,\lambda'\in P$, given realizations $a\models p_{\lambda}$ and $b\models p_{\lambda'}$, we have that $a$ and $b$ belong to distinct eigenspaces and thus $d(a,b)=\sqrt{2}$. The family 
of types $\{p_{\lambda}:\lambda\in P\}$ is uniformly separated, each one is a type over $\emptyset$, and there are $2^{\aleph_0}$ many such types, whence $\Sigma_{K,m}$ is not $\omega$-stable.

Now assume that $K$ is countable. Enumerate $K=\{\lambda_i: i\in I\}$ with $|I|\leq \aleph_0$. To simplify the notation, set $H_i$ to be the eigenspace associated to $\lambda_i$ and write $P_{H_i}$ for the orthogonal projection onto this space. By Lemma \ref{lem:countable}, we have that $H=\bigoplus_{i\in I} H_i$, so for each $a\in H$, we can write $a=\sum_{i\in I}P_{H_i}(a)$. For each $i\in I$, we say that $\lambda_i$ belongs to the \emph{support} of $a$ if $P_{H_i}(a)\neq 0$.
For $\epsilon>0$, choose a finite $I_{a,\epsilon}\subset I$ such that $$\left\|\sum_{i\in I}P_{H_i}(a)-\sum_{i\in I_{a,\epsilon}}P_{H_i}(a)\right\|<\epsilon.$$
Note that the support of $a_\epsilon:=\sum_{i\in I_{a,\epsilon}}P_{H_i}(a)$ is contained in the finite set $I_{a,\epsilon}$. 
This shows that elements with finite support are dense in $H$, whence the collection of types over $\emptyset$ of elements of the unit ball with finite support form a separable subspace of the space of types over $\emptyset$ of elements of the unit ball. 

Now take $B\subseteq H$ countable, whence the subspace $\langle B\rangle$
is separable. For any $a,c\in H$, we can write $a=a_B+a_1$, $c=c_B+c_1$, with $a_B=P_{\langle B\rangle}(a)$, $c_B=P_{\langle B\rangle}(c)$, 
 and $a_1,c_1\perp \langle B\rangle$ and thus $\|a-c\|^2=\|a_B-c_B\|^2+\|a_1-c_1\|^2$. In order to show $\Sigma_{K,m}$ is $\omega$-stable, it suffices to prove that:  (i) there is a separable family of types over $\emptyset$ of elements inside $\langle B\rangle$, and (ii) types of elements over $\emptyset$ orthogonal to $\langle B\rangle$. Since $\langle B\rangle$ is separable, (i) is clear. For (ii), use a dense countable collection of types of elements of finite support that belong to $\langle B\rangle^\perp$, which is possible by the previous paragraph (after passing to $T\upharpoonright \! \langle B\rangle^\perp)$.
\end{proof}

\begin{rem}
Assume $K$ is countable.  Since $\Sigma_{K,m}$ is $\omega$-stable,  it follows that $\Sigma_{K,m}$ has a separable saturated model and a separable prime model. These can be concretely described in our setting.  For each $\lambda \in \isol(K)$, the space $H_\lambda$ is definable and its dimension is the same in all models of the theory, namely $m(\lambda)$. 
On the other hand, for each accumulation point $\lambda \in K'$ and any value $n_{\lambda}\in \{0,1,\dots,\aleph_0\}$, we can find a model $(H,T)\models \Sigma_{K,m}$ with $\dim(H_{\lambda})=n_{\lambda}$. The saturated separable model of $\Sigma_{K,m}$ is the one where $\dim(H_{\lambda})=\aleph_0$ for all $\lambda\in K'$ while the prime model is the one with $\dim(H_{\lambda})=0$ for all $\lambda\in K'$.
\end{rem}

We now discuss $\omega$-stability and existence of prime models up to perturbation.  To do so, we need to be precise about what these mean in our context.

\begin{defn}\label{def:pert}
    For $i=0,1$, fix models $(H_i,T_i)\models\Sigma_{K,m}$ for $i=0,1$.  Fix also $r\geq 0$. We define an \textbf{$r$-perturbation of} from $(H_0,T_0)$ to $(H_1,T_1)$ to be an isometric isomorphism of Hilbert spaces $U:H_0\to H_1$ for which
    $$\| UT_0U^{-1}-T_1\|\leq r.$$
    The set of all $r$-perturbations from $(H_0,T_0)$ to $(H_1,T_1)$ will be denoted $$\operatorname{Pert}_r((H_0,T_0),(H_1,T_1))$$ and simply by $\operatorname{Pert}_r(H_0,T_0)$  if $(H_0,T_0)=(H_1,T_1)$. 
    Assume now that $(H_0,T_0)$ is sufficiently saturated and strongly homogeneous.  Fix $p,q\in S_n(T)$. Given $r\geq 0$, we say that $d_{\pert}(p,q)\leq r$ if there are realizations $\vec a\models p$, $ \vec b\models q$ and $U\in \operatorname{Pert}_r(H_0,T_0)$ for which $\|U(\vec a)-\vec b\|\leq r$.
    We say $\Sigma_{K,m}$ is \textbf{$\omega$-stable up to perturbations} if for any separable $(H_0,T_0)\models \Sigma_{K,m}$, 
 $(S_1(H_0),d_{\pert})$ is also separable. 
\end{defn}

\begin{theo} Every theory
$\Sigma_{K,m}$ is $\omega$-stable up to perturbations.
\end{theo}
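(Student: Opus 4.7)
The plan is to reduce $\omega$-stability up to perturbations for $\Sigma_{K,m}$ to $\omega$-categoricity of the theories with finite spectrum that come from Corollary \ref{omega-cat}. Fix a separable $(H_0,T_0)\models \Sigma_{K,m}$ and a sufficiently saturated and strongly homogeneous elementary extension $(\mathfrak U,T)$ in which to realize types and to measure $d_{\pert}$.

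Given $\epsilon>0$, I would first partition $K$ into Borel sets $A_1,\ldots,A_n$ each of diameter less than $\epsilon$, pick $\lambda_i\in A_i$, and form the normal operator
$$\tilde T := \sum_{i=1}^n \lambda_i E(A_i)$$
on $\mathfrak U$, where $E$ is the spectral measure of $T$.  Then $\|T-\tilde T\|<\epsilon$ and $\tilde T$ has finite spectrum $K':=\{\lambda_i : E(A_i)\neq 0\}$, so $(\mathfrak U,\tilde T)\models \Sigma_{K',m'}$, where $m'(\lambda_i):=\dim E(A_i)(\mathfrak U)\in \mathbb N\cup\{\infty\}$.  A short argument upgrading approximate eigenvectors of $T$ with approximate eigenvalues in $A_i$ to actual $\tilde T$-eigenvectors (via Lemma \ref{lem:almostnear}) shows that the saturation of $(\mathfrak U,T)$ transfers to $(\mathfrak U,\tilde T)$, so the latter is sufficiently saturated and strongly homogeneous as a model of $\Sigma_{K',m'}$.

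Because $K'$ is finite, Corollary \ref{omega-cat} gives that $\Sigma_{K',m'}$ is $\omega$-categorical; more concretely, a type in $S_1((\mathfrak U,\tilde T),H_0)$ is determined by an element of the separable subspace $\langle H_0\rangle_0^{\tilde T}\subseteq \mathfrak U$ together with the tuple of norms of the projections of its orthogonal part onto the finitely many $\tilde T$-eigenspaces.  Hence $(S_1((\mathfrak U,\tilde T),H_0),d)$ is separable as a metric space.  Choose a countable $\epsilon$-dense family $\{\tilde q_k\}_k$, fix realizations $b_k\models \tilde q_k$ in $\mathfrak U$, and define candidate types $p_k:=\tp_T(b_k/H_0)\in S_1((\mathfrak U,T),H_0)$.

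To verify $d_{\pert}$-density, take $p\in S_1((\mathfrak U,T),H_0)$ realized by $a\in \mathfrak U$, set $\tilde q:=\tp_{\tilde T}(a/H_0)$, and pick $k$ with $d(\tilde q,\tilde q_k)<\epsilon$, so that there is $a'\models \tilde q_k$ in $(\mathfrak U,\tilde T)$ with $\|a-a'\|<\epsilon$.  By strong homogeneity of $(\mathfrak U,\tilde T)$ over $H_0$, fix an automorphism $\sigma$ of $(\mathfrak U,\tilde T)$ fixing $H_0$ pointwise with $\sigma(a')=b_k$.  Viewing $\sigma$ as a unitary on $\mathfrak U$ commuting with $\tilde T$, the triangle inequality gives
$$\|\sigma T\sigma^{-1}-T\|\leq \|\sigma(T-\tilde T)\sigma^{-1}\|+\|\tilde T-T\|<2\epsilon,$$
so $\sigma\in \operatorname{Pert}_{2\epsilon}(\mathfrak U,T)$, while $\|\sigma(a)-b_k\|=\|\sigma(a)-\sigma(a')\|=\|a-a'\|<\epsilon$.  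Therefore $d_{\pert}(p,p_k)\leq 2\epsilon$, and taking a countable union of such families over $\epsilon=1/n$ yields a countable $d_{\pert}$-dense subset of $S_1(H_0)$.  The main technical obstacle is verifying that $(\mathfrak U,\tilde T)$ inherits enough saturation and homogeneity from $(\mathfrak U,T)$ to supply the automorphism $\sigma$; this amounts to knowing that each spectral projection $E(A_i)$ has range of the dimension prescribed by $m'(\lambda_i)$, which is where Lemma \ref{lem:almostnear} and the saturation of $(\mathfrak U,T)$ are essential.
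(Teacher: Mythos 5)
Your approach is genuinely different from the paper's, and the central idea is sound. The paper reduces to $|K|=2^{\aleph_0}$, builds the explicit ``rich'' separable elementary extension $(H_0,T_0)\oplus (H_1,T_1)$ (with $(H_1,T_1)$ a separable model of the model companion of $\Sigma_{K'}$), and extracts the perturbation from a sequence of unitaries witnessing approximate unitary equivalence (Fact \ref{fact:WNB}) between two spectrally equivalent pieces. You instead approximate $T$ in operator norm by a finite-spectrum $\tilde T$ built from the spectral measure, lean on $\omega$-stability of the finite-spectrum theory to get a $d$-dense countable family of $\tilde T$-types over $H_0$, and promote a $\tilde T$-automorphism fixing $H_0$ to a $2\epsilon$-perturbation of $T$; the estimate $\|\sigma T\sigma^{-1}-T\|\leq 2\|T-\tilde T\|$ is exactly right. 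This avoids invoking approximate unitary equivalence of separable spectrally equivalent operators at this stage and is, to my mind, more transparent about where the $\epsilon$ is spent.

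There is, however, a genuine gap in the step where you assert that ``the saturation of $(\mathfrak U,T)$ transfers to $(\mathfrak U,\tilde T)$.'' This is false in general: a cell $A_i$ of the partition can meet $K$ in infinitely many isolated points of finite multiplicity while containing no accumulation point of $K$ and no isolated point of infinite multiplicity (for instance a half-open square whose open edge abuts an accumulation point of $K$, with all the isolated points $\{-1/n\}$ inside the cell and the limit point $0$ on the excluded edge). Then $E(A_i)(\mathfrak U)$ has dimension exactly $\aleph_0$ in \emph{every} model, while $m'(\lambda_i)=\infty$, so $(\mathfrak U,\tilde T)$ is not $\kappa$-saturated for any $\kappa>\aleph_0$, and Lemma \ref{lem:almostnear} cannot rescue the claim. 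Fortunately you do not actually need full saturation: strong homogeneity of $(\mathfrak U,\tilde T)$ over $H_0$ can be established by hand. Since $H_0$ is $T$-reducing it is invariant under every $E(A_i)$, hence $\tilde T$-reducing, and $H_0^\perp=\bigoplus_i E(A_i)(H_0^\perp)$; given $a',b_k$ with the same $\tilde T$-type over $H_0$, one defines $\sigma$ to be the identity on $H_0$ and, on each $E(A_i)(H_0^\perp)$, any unitary carrying the $E(A_i)$-component of $a'-P_{H_0}(a')$ to that of $b_k-P_{H_0}(b_k)$ (these have equal norms by Proposition \ref{typedescription}). Likewise, the realization $a'\models\tilde q_k$ at distance close to $d(\tilde q,\tilde q_k)$ from $a$ can be built concretely inside $\mathfrak U$ componentwise, again with no appeal to saturation of $(\mathfrak U,\tilde T)$. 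With the saturation claim replaced by these direct constructions, your proof goes through.
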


\begin{proof}
If $K$ is countable, the result follows from Proposition \ref{prop:omega-stability}. Thus, without loss of generality, it is enough to consider the case where $|K|=2^{\aleph_0}$. 

Let $(H_0,T_0)\models \Sigma_{K,m}$ be separable. We will show all types over $H_0$ can be realized approximately in a separable elementary extension of $(H_0,T_0)$. Let $(H_1,T_1)\models \Sigma_{\operatorname{normal}}$ be a separable structure with $\sigma(T_1)=K'$ such that all eigenspaces associated to isolated points in $K'$ have dimension $\aleph_0$.  (In other words, $(H_1,T_1)$ is a separable model of the model companion of $\Sigma_{K'}$; see Corollary \ref{modcomp}.) Set $(H_2,T_2)=(H_0,T_0)\oplus (H_1,T_1)$. Then $\sigma(T_2)=\Sigma$ and the isolated points in $K$ belong to $\sigma(T_0)\setminus \sigma(T_1)$, so their multiplicity does not increase when we go from the structure $(H_0,T_0)$ to the structure $(H_2,T_2)$. This shows that $(H_0,T_0)$ and $(H_2,T_2)$
are spectrally equivalent and thus, by Corollary \ref{Cor:spec-elem}, we have $(H_2,T_2)\models \Sigma_{K,m}$.  By model completeness of $\Sigma_{K,m}$, $(H_2,T_2)$ is an elementary extension of $(H_0,T_0)$.  Notice also that $(H_2,T_2)$ is a ``rich'' elementary extension of $(H_0,T_0)$ in the sense that all eigenspaces in $(H_2,T_2)$ associated to isolated points of $K'$ have dimension $\aleph_0$. 

Now take a 1-type $p$ over $H_0$; we claim that $p$ is approximately realized in $(H_2,T_2)$.  Take a separable elementary extension $(H_3,T_3)$ of $(H_0,T_0)$ where $p$ is realized.  Set $(H_4,T_4)= (H_3,T_3)\oplus (H_1,T_1)$. As above, $(H_4,T_4)$ is a ``rich'' separable elementary extension of $(H_0,T_0)$ containing a realization of $p$. Set $H_5:=H_4\ominus H_0$ and $T_5:=T_4\upharpoonright\!H_5$.  Then $\sigma(T_5)=K'$ and all isolated points of $K'$ have associated  eigenspaces of
dimension $\aleph_0$. The structures $(H_5,T_5)$ and $(H_1,T_1)$ are both separable, $\sigma(T_1)=\sigma(T_5)=K'$, 
and in both models all isolated points of $K'$ have eigenspaces of dimension $\aleph_0$. In other words,  $(H_5,T_5)$, $(H_1,T_1)$ are spectrally equivalent and thus there is a sequence $(U_n)_n$ of unitary maps $U_n:H_1\to H_5$ witnessing that $\lim_{n\to \infty} \|T_1-U_n^{-1}T_5U_n \|=0$.
Set $V_n=\operatorname{Id}\oplus U_n$, where $\operatorname{Id}:H_0\to H_0$ is the identity map. Then $V_n:H_2\to H_4$ witnesses that $\lim_{n\to \infty} \|T_2-V_n^{-1}T_4V_n \|=0$
and $V_n(a)=a$ for all $a\in H_0$.  If $b\in H_4$ realizes $p$, write $b=P_{H_0}(b)+c$ with $c\in H_4\ominus H_0=H_5$.  It follows that, for $n$ large enough, $P_{H_0}(b)+V_n^*(c)\in H_2$ almost realizes $p$, as desired.
\end{proof}

\begin{defn}
    Given 
$(H_0,T_0)\models \Sigma_{K,m}$ and $A\subseteq H_0$, we write $L_A$ for the language obtained by adding a constant for each element in $A$ and $\Sigma_{K,m,A}$ for the $L_A$-theory obtained by adding sentences for the formula that elements of $A$ satisfy in $(H_0,T_0)$.   
We say that $\Sigma_{K,m}$ \textbf{has prime models up to perturbations} if given a model $(H_0,T_0)$ of $\Sigma_{K,m}$ and $A\subseteq H_0$, there is  a model $(H_1,T_1;c_a\in A)$ of $ \Sigma_{K,m,A}$ such that for all models
    $(H_2,T_2;c_a;a\in A)$ of $\Sigma_{K,m,A}$ and all $r\geq 0$, there is an elementary substructure $(H_1',T_1';c_a:a\in A)\preceq (H_2,T_2;c_a:a\in A)$ with $A\subseteq H_1'$ and $U_r\in \text{Pert}_r((H_1,T_1),(H_1',T_1'))$ that fixes $A$ pointwise in the sense that $U_r(c_a^{H_1})=c_a^{H_2}$ for each $a\in A$.
\end{defn}

A similar argument as above shows the following:

\begin{theo} Every theory
$\Sigma_{K,m}^*$ has prime models up to perturbations.
\end{theo}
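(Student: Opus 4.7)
My plan is to mirror the strategy of the preceding theorem on $\omega$-stability up to perturbations.

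\emph{Construction of the candidate.}  Given $(H_0,T_0)\models\Sigma_{K,m}^*$ and $A\subseteq H_0$, let $V_A := \langle A\rangle_{H_0}$, the closed reducing subspace generated by $A$ and $H_{\fin,T_0}$, with $T_A := T_0\upharpoonright V_A$. For each $\lambda\in\isol(K)$, set $d_\lambda := \dim(V_A)_\lambda$. Let $K_0$ denote the closure in $\mathbb{D}$ of $(K\setminus\sigma(T_A))\cup\{\lambda\in\isol(K) : d_\lambda<m(\lambda)\}$, a closed subset of $K$, and let $(H_1',T_1',T_1'^{*})$ be a separable model of $\Sigma_{K_0,m'}^*$ where the multiplicity assignment $m'$ is chosen so that $(H_1,T_1,T_1^*) := (V_A,T_A,T_A^*)\oplus(H_1',T_1',T_1'^{*})$ models $\Sigma_{K,m,A}^*$ (concretely, $m'(\lambda)=m(\lambda)-d_\lambda$ at isolated $\lambda\in\isol(K)$ with $d_\lambda<m(\lambda)$, and minimal elsewhere). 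The density character of $(H_1,T_1)$ is $\max(|A|,\aleph_0)$.

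\emph{Embedding into an arbitrary model.}  Fix $(H_2,T_2)\models\Sigma_{K,m,A}^*$ and $r>0$.  By Theorem~\ref{QE}, $V_A^{H_2} := \langle A\rangle_{H_2}$ is isomorphic to $V_A$ via a map $\phi_A$ fixing $A$ pointwise.  Inside the complement $W_2 := H_2\ominus V_A^{H_2}$, I will select a separable $(T_2,T_2^*)$-reducing subspace $W$ that is spectrally equivalent to $(H_1',T_1')$.  At each isolated $\lambda\in\isol(K)$ with $d_\lambda<m(\lambda)$, the dimension $\dim(W_2)_\lambda = m(\lambda)-d_\lambda$ is forced by the multiplicity requirement in $(H_2,T_2)$ together with $\phi_A$, so an eigenspace of the required size can be extracted.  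At accumulation points of $K_0$, the spectral projections of $T_2$ on small neighborhoods have rank in $W_2$ sufficient to supply approximate eigenvectors, and (by the minimality of $m'$ at such points) no eigenvector is demanded on the $H_1'$ side.  Setting $(H_1^*,T_1^*) := (V_A^{H_2},T_2\upharpoonright V_A^{H_2})\oplus(W,T_2\upharpoonright W)$, the analogous multiplicity count yields $(H_1^*,T_1^*)\models\Sigma_{K,m,A}^*$, so by model completeness (Corollary~\ref{modelcomplete}) this is an elementary substructure of $(H_2,T_2)$ containing $A$.

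\emph{Conclusion and main obstacle.}  By Fact~\ref{fact:WNB} applied to the separable, spectrally equivalent normal structures $(H_1',T_1')$ and $(W,T_2\upharpoonright W)$, there is a unitary $U_0:H_1'\to W$ with $\|T_1'-U_0^{-1}(T_2\upharpoonright W)U_0\|<r$.  Then $U_r := \phi_A\oplus U_0:H_1\to H_1^*$ is a unitary fixing $A$ pointwise with $\|T_1-U_r^{-1}(T_2\upharpoonright H_1^*)U_r\|<r$, so $U_r\in\operatorname{Pert}_r((H_1,T_1),(H_1^*,T_1^*))$, as required.  The principal technical obstacle is the construction of $W$ inside $W_2$ with precisely the spectral profile of $(H_1',T_1')$: while the isolated eigenvalue contributions transfer by multiplicity counting via $\phi_A$, the behavior at accumulation points of $K$ that become isolated in $K_0$ is delicate, since $W_2$ need not contain forced eigenvectors there.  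This is circumvented by choosing $(H_1',T_1')$ minimally, so that the matching at such points only requires approximate eigenvectors, which are guaranteed to exist in $W_2$ by the spectral theorem applied to $T_2$ and the bounded contribution of $V_A^{H_2}$ to the relevant spectral projections.
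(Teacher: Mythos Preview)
Your overall strategy works, but your discussion of the ``principal technical obstacle'' is confused, and the circumvention you propose is both wrong and unnecessary. The obstacle you fear---an accumulation point $\lambda$ of $K$ becoming isolated in $K_0$---simply cannot occur. Since $H_{\fin,T_0}\subseteq V_A$, every isolated point of $K$ lies in $\sigma(T_A)$; hence any $\lambda\in K\setminus\sigma(T_A)$ is an accumulation point of $K$, and because $\sigma(T_A)$ is closed, an entire $K$-neighborhood of $\lambda$ lies in $K\setminus\sigma(T_A)\subseteq K_0$, so $\lambda$ is accumulated in $K_0$. Limit points of $K\setminus\sigma(T_A)$ and of the deficient isolated set are likewise accumulated. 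Thus $\isol(K_0)\subseteq\isol(K)$, and at each such $\lambda$ one has $\dim(W_2)_\lambda=m(\lambda)-d_\lambda=m'(\lambda)$ exactly. Your proposed fix (``minimality reduces to approximate eigenvectors'') would not have worked had the obstacle been real: an isolated point of $\sigma(T_1')$ forces a nonzero eigenspace, and spectral equivalence demands a matching one in $W$; approximate eigenvectors do not suffice. What you should do instead is record $\isol(K_0)\subseteq\isol(K)$, set $W':=E_{W_2}(K_0)(W_2)$, verify $(W',T_2\!\upharpoonright W')\models\Sigma_{K_0,m'}^*$ (the spectrum containment $K_0\subseteq\sigma(T_2\!\upharpoonright W')$ follows since near each $\lambda\in K\setminus\sigma(T_A)$ the spectral projection already lands in $W'$), and then pass to a separable elementary $L^*$-substructure $W\preceq W'$.

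For comparison, the paper takes a different route in the embedding step. After replacing $A$ by its algebraic closure and directly adjoining eigenvectors at isolated $\lambda$ with $m(\lambda)=\infty$ but deficient $d_\lambda$, it uses the Cantor--Bendixson decomposition $K=P\cup C$ and lets the complement be a separable model of $\Sigma_{\overline{S}}^*$ for $S=P\setminus\sigma(T_0)$, an \emph{open} subset of $K$. Rather than finding a spectrally equivalent copy inside the target complement $H_4$, it shows $(H_4,T_4)$ and $(H_4,T_4)\oplus(H_2,T_2)$ are spectrally equivalent (openness of $S$ ensures no isolated-eigenspace dimensions change) and uses the resulting approximate unitary $H_4\oplus H_2\to H_4$ to embed $H_2$. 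Your direct ``find a copy'' argument is arguably cleaner once $\isol(K_0)\subseteq\isol(K)$ is in hand; the paper's absorption trick trades that lemma for different spectral bookkeeping.
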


\begin{proof}
    Just as we used ''rich'' extension to show that the theory $\Sigma_{K,m}$ is $\omega$-stable up to perturbations, we can use ''poor'' models to show that the theory $\Sigma_{K,m}^*$ has prime models up to perturbations over any set $B$. By first replacing $B$ with $\acl(B)$, it is sufficient to construct a prime model over a substructure $(H_0,T_0,T_0^*)$ of a model of $\Sigma_{K,m}^*$ that contains $(H_{\fin},T_{\fin},T_{\fin}^*)$. Note that $\sigma(T_0)\subseteq K$ and that $H_0$ includes all finite dimensional eigenspaces associated to isolated points in the spectrum.
   If there are isolated points $\lambda$ of $K$ with $m(\lambda)=\infty$ but $H_{0,\lambda}$ is finite-dimensional, when constructing the prime extension, simply add countable many orthonormal eigenvectors to each of these eigenspaces. As a result, we may assume all eigenspaces corresponding to isolated points have the correct multiplicity in $(H_0,T_0,T_0^*)$.
   
  Write $K=P\cup C$, where $P$ is a perfect subset of $K$ and $C$ is countable. The set $C$ is the collection of isolated points in $K$ together with some of its accumulation points; as a result, $C\subseteq \sigma(T_0)$. (We will not add more eigenvectors associated to the elements of $C$, so in that sense our extension will be ``poor''.)  If $\sigma(T_0)\cap P= P$, then $\sigma(T_0)=P\cup C$ and for all isolated points $\lambda$ of $K$, we have $m(\lambda)=\dim(H_{0,\lambda})$.  As a result,
$(H_0,T_0,T_0^*)\models \Sigma_{K,m}^*$ and we are done. Thus we may assume that
$\sigma(T_0)\cap P\subsetneq P$ and we need to add witnesses for the part of the spectrum
    that we are missing. Set $S=P\setminus \sigma(T_0)$, which is an open subset of $K$ disjoint from all isolated points in $K$.
    Take separable $(H_2,T_2,T_2^*)\models \Sigma_{\overline{S}}^*$ and set $(H_1,T_1,T_1^*)=(H_0,T_0,T_0^*)\oplus (H_2,T_2,T_2^*)$.  Note that $\sigma(T_1)=K$ and that we did not modify the eigenspaces associated to the isolated point from $(H_0,T_0,T_0^*)$.

We claim that $(H_1,T_1,T_1^*)$ is prime over $(H_0,T_0,T_0^*)$ up to perturbations. Given another extension $(H_3,T_3,T_3^*)\supseteq (H_0,T_0,T_0^*)$ with $(H_3,T_3,T_3^*)\models \Sigma_{K,m}^*$ we can write
$(H_3,T_3,T_3^*)= (H_0,T_0,T_0^*)\oplus (H_4,T_4,T_4^*)$ with $\sigma(T_4)\supseteq \overline{S}$. As $\sigma(T_2)=\overline{S}$ and $S$ is open, we have $\sigma(T_2 \oplus T_4)=\sigma(T_4)$ and $(H_4,T_4,T_4^*)\oplus (H_2,T_2,T_2^*)$ does not modify the dimensions of the eigenspaces associated to isolated points of the spectrum $\sigma(T_4)$.
This shows that $(H_4,T_4,T_4^*)\oplus (H_2,T_2,T_2^*)$ is spectrally equivalent to $(H_4,T_4,T_4^*)$ and thus $T_2$ is approximately unitarily equivalent to $T_4\oplus T_2$.  As a result, we can embed, up to perturbations, the space $(H_2,T_2,T_2^*)$ into $(H_4,T_4,T_4^*)$, completing the proof.
\end{proof}

From the point of view of classification theory, the theories $\Sigma_{K,m}$ are very well behaved, even without considering perturbations. A futher example of this is a result of the second and third named authors of this paper, together with N. Levi, showing that the theories $\Sigma_{K,m}$ have the Schr\"oder Bernstein property \cite{SB}. The stability-theoretic properties of the theories $\Sigma_{K,m}$ could be developed even further. For instance, in \cite[Section 4]{BYUsZa}, in the special case of a unitary operator with full spectrum, the authors characterize orthogonality and domination of types in terms of properties of the corresponding measures (for example, orthogonality of types over $\emptyset$ agrees with orthogonality of the associated measures).

\section{Open Questions}

We end the paper with a couple of further questions that stem naturally from the results presented in this paper.

First, we characterized the logic topology in the corresponding space of measures as the weak* topology though the maps $\Phi_n$ studied in Section \ref{types-categ}. This leads to the following questions, the first of which was also discussed in Section \ref{types-categ}:

\begin{ques}
Fix a completion $\Sigma=\Sigma_{K,m}$ of $\Sigma_{\operatorname{normal}}$.

\begin{enumerate}
 \item Consider the metric $d^{\Phi}$ on $M(K)_+$ induced by the metric on $S_1(\Sigma)$ via the identification given by $\Phi:S_1(\Sigma)\to M(K)_+$. Does the topology induced by $d^{\Phi}$ correspond to a natural topology on $M(K)_+$?

 \item Consider the metric $d^{\Phi}_{\pert}$ on $M(K)_+$ induced by the metric $d_{\pert}$ on $S_1(\Sigma)$ via the identification given by $\Phi:S_1(\Sigma)\to M(K)_+$. Does the topology induced by $d^{\Phi}_{\pert}$ correspond to a natural topology on $M(K)_+$?

 \end{enumerate}

\end{ques}

Next, given a theory
$\Sigma=\Sigma_{K,m}$, one can build the theory of beautiful (belle) pairs $\Sigma_P$ obtained by adding a predicate $P$ corresponding to the distance to a Hilbert subspace which is invariant under $T$ and $T^*$. We can add axioms saying that the underlying Hilbert space and the substructure both model the theory $\Sigma$. Finally, for each point $ \lambda\in K$ with $m(\lambda)=\infty$, we add the scheme

$$\inf_{x_1}\dots \inf_{x_n}\sup_{y:P(y)=0}\max\left(\max_{i,j\leq n}\|\langle x_i,x_j\rangle-\delta_{i,j}\|,\max_{i\leq n}\|T(x_i)-\lambda x_i\|,\max_{i\leq n}\|\langle x_i,y\rangle\| \right)$$
saying that the approximate codimension of the eigenspaces associated to $\lambda$ are infinite.

\begin{ques}
Characterize forking in $\Sigma_P$ and determine if some traces of the uniform finite basedness of Hilbert spaces gets reflected in properties of $\Sigma_P$.
\end{ques}


\end{document}